\documentclass[12pt,reqno]{amsart}
\usepackage{amssymb, amsfonts, amsbsy, latexsym, epsfig, color}
\usepackage{amsmath} 
\usepackage{amsthm}
\usepackage{amsmath,amssymb}
\newcommand{\vertiii}[1]{{\left\vert\kern-0.25ex\left\vert\kern-0.25ex\left\vert #1
    \right\vert\kern-0.25ex\right\vert\kern-0.25ex\right\vert}}
\textwidth  6.5in \textheight 8.5in

\topmargin0cm \oddsidemargin0cm \evensidemargin0cm
\DeclareMathOperator*{\argmin}{arg\,min}
\newtheorem{Thm}{Theorem}[section]

\newtheorem{lemma}[Thm]{Lemma}

\newtheorem{proposition}[Thm]{Proposition}
\newtheorem{definition}[Thm]{Definition}

\newtheorem{theorem}[Thm]{Theorem}

\newcommand\cC{{\mathcal{C}}}

\newcommand\cE{{\mathcal{E}}}

\newcommand\cH{{\mathcal{H}}}


\newcommand{\bitem}{\begin{itemize}}
\newcommand{\eitem}{\end{itemize}}
\newcommand{\benum}{\begin{enumerate}}
\newcommand{\eenum}{\end{enumerate}}
\newcommand{\beq}{\begin{equation}}
\newcommand{\eeq}{\end{equation}}
\newcommand{\ip}[2]{\langle#1,#2\rangle}

\newcommand{\spann}{\mbox{\rm span}}

  \newcommand{\R}{\mathbb{R}}
 \newcommand{\N}{\mathbb{N}}
 \newcommand{\C}{\mathbb{C}}
 \newcommand{\Z}{\mathbb{Z}}

\DeclareMathOperator*{\supp}{supp}

\def\ZZ{\mathbb{Z}}
\def\RR{\mathbb{R}}
\def\ZZ{\mathbb{Z}}

\def\cC{{\mathcal{C}}}

\def\cH{{\mathcal{H}}}

\def\cH{\mathcal{H}}

\newcommand{\gk}[1]{{\color{black}#1}}
\newcommand{\wq}[1]{{\color{black}#1}}

\begin{document}

\title{Optimal Compressive Imaging of Fourier Data}

\author[G. Kutyniok]{Gitta Kutyniok}
\address{Department of Mathematics, Technische Universit\"at Berlin, 10623 Berlin, Germany}
\email{kutyniok@math.tu-berlin.de}

\author[W.-Q Lim]{Wang-Q Lim}
\address{Fraunhofer Image \& Video Coding Group, Fraunhofer Institute for Telecommunications --
Heinrich Hertz Institute (Fraunhofer HHI), 10587 Berlin, Germany}
\email{wang.lim@hhi.fraunhofer.de}

\thanks{G.K. acknowledges support by the Einstein Foundation Berlin, by the Einstein Center for Mathematics Berlin
(ECMath), by Deutsche Forschungsgemeinschaft (DFG), by the DFG Collaborative Research Center TRR 109 ``Discretization
in Geometry and Dynamics'', by the DFG Research Center {\sc Matheon} ``Mathematics for key technologies'' in Berlin,
and by the European Commission. W.L. would like to thank the DFG Collaborative Research Center TRR 109 ``Discretization
in Geometry and Dynamics'' and the DFG Research Center {\sc Matheon} ``Mathematics for key technologies'' in Berlin
for its support.}

\begin{abstract}
Applications such as Magnetic Resonance Tomography acquire imaging data by point samples of their Fourier transform.
This raises the question of balancing the efficiency of the sampling strategies with the approximation accuracy of
an associated reconstruction procedure. In this paper, we introduce a novel sampling-reconstruction scheme based on
a random anisotropic sampling \gk{pattern} and a compressed sensing type reconstruction strategy with a variant of dualizable
shearlet frames as sparsifying representation system. For this scheme, we prove asymptotic optimality in an approximation
theoretic sense for cartoon-like functions as a model class for the imaging data. Finally, we present numerical experiments
showing the superiority of our scheme over other approaches.
\end{abstract}

\keywords{Anisotropic Features, Compressed Sensing, Frames, Frequency Measurements, Shearlets, Sparse Approximation, Sparse Sampling}


\maketitle

\section{Introduction}\label{sec:intro}

In the age of Big Data, acquiring data is of tremendous importance, but a highly difficult task. One the
one hand, one aims for high subsampling of the original data in the sense of an efficient sensing process,
whereas on the other hand the reconstruction procedure should be both efficient and reconstruct the original
data with high accuracy. A breakthrough could recently be achieved by the introduction of the quite
general applicable methodology of compressed sensing in the two parallel papers \cite{CRT06} and \cite{Don06c},
which allows high subsampling rates using random measurement \gk{matrices} alongside efficient reconstruction
procedures such as $\ell_1$ minimization.

Each new technology requires a sensibly adapted methodology for data acquisition. Particular attention has
recently been paid to the general scenario of data acquisition from the Fourier transform of the original data.
For various applications such as Magnetic Resonance Imaging (MRI), Electron Microscopy (EM), Fourier Optics, Reflection
Seismology, and X-ray Computed Tomography (see also \cite{AHRT14}), the technology in fact only enables access
to the Fourier transform of the imaging data, allowing the acquisition of point samples. Although lately
various suitably adapted sampling-reconstruction schemes have been introduced and studied and some intriguing
results have been derived, the optimality of compressed sensing based schemes is still an open problem.

\subsection{General Approach to Sampling-Reconstruction Schemes for Fourier Data} \label{subsec:generalapproach}

For the aforementioned class of applications, the data acquisition process can be modelled as follows. Letting
$f \in \cH$, where $\cH = L^2(\R^2)$ or $\ell^2(\Z^2)$, presumably satisfy\gk{ing} additional regularity assumptions, and
given a sampling set $\Delta \subseteq \Z^2$, we acquire the point samples
\[
(\hat{f}(n))_{n \in \Delta} = (\ip{f}{e_n})_{n \in \Delta},
\]
where $e_n := e^{2\pi i \ip{\cdot}{n}}$, aiming to reconstruct $f$ from knowledge of those efficiently. A very general concept
which most approaches adopt requires the following ingredients:
\begin{itemize}
\item {\em Data Acquisition.} The sampling is performed by the map
\[
f \mapsto (\langle f,e_n \rangle)_{n \in \Delta},
\]
with $\Delta$ being \gk{coarsely speaking} as small as possible, giving the subsampling rate. We remark that applications typically put additional
constraints on the sampling set $\Delta$ such as sampling along lines or curves (for a first theoretical analysis of such
sampling scenarios, see \cite{GRUV15}). However, we will disregard such restrictions in order to derive a
benchmark result, which holds for a whole range of applications.
\item {\em Sparse Approximation.} Given a class of data, a widely accepted paradigm to date is the existence of a representation
system which provides sparse approximation of this class, which in the reconstruction process can be used for regularization of
the inverse problem. This requires to first indentify a model situation, $\cC \subseteq \cH$, say, and second an associated
representation system $(\psi_\lambda)_{\lambda \in \Lambda}$ -- which we for now assume to constitute an orthonormal basis --,
providing rapid decay of the error of the best $N$-term approximation of each element $f \in \cC$. Thus, we, in particular,
derive an expansion
\beq \label{eq:sparseapprox}
f = \sum_{\lambda \in \Lambda} c_\lambda \psi_\lambda
\eeq
with $(c_\lambda = \ip{f}{\psi_\lambda})_{\lambda \in \Lambda}$ being sparse.
\item {\em Reconstruction.} The reconstruction procedure then consists of solving the linear system of equations given by
\[
\left(\langle f,e_n \rangle = \sum_{\lambda \in \Lambda} \langle \psi_\lambda,e_n\rangle \tilde{c}_\lambda\right)_{n \in \Delta}
\mapsto (\hat{c}_\lambda)_{\lambda \in \Lambda},
\]
aiming for a small error $\|f - \sum_{\lambda \in \Lambda} \gk{\hat{c}_\lambda} \psi_\lambda\|$. This problem fits precisely into
the compressed sensing framework, which advocates to solve this problem as the convex optimization problem with sparsity
constraint
\beq \label{eq:l1general}
(\hat{c}_\lambda)_{\lambda \in \Lambda} = \mbox{argmin}_{(\tilde{c}_\lambda)_{\lambda \in \Lambda}}
\|(\tilde{c}_\lambda)_{\lambda \in \Lambda}\|_1 \; \mbox{s.t.} \; \Big(\langle f,e_n \rangle = \sum_{\lambda \in \Lambda}
\langle \psi_\lambda,e_n\rangle \tilde{c}_\lambda\Big)_{n \in \Delta},
\eeq
delivering results for the accuracy of reconstruction dependent on the sparse approximation property and the incoherence of
the measurement matrix $(\langle \psi_\lambda,e_n\rangle)_{\lambda \in \Lambda,n \in \Delta}$. It is well-known that in a
(finite) compressed sensing scenario $y = Ax$ with $x$ being sparsely approximated by a representation system, a random
$M \times N$ measurement matrix $A$ such as a Gaussian iid matrix allows for an optimally small number $M$ of measurement
vectors while still recovering $x$ exactly with high probability using $\ell_1$ minimization (for precise statements we
refer to \cite{DDEK12}). This already indicates the usefulness of random sampling sets $\Delta$.
\end{itemize}

\subsection{Sparse Sampling Results}

Recovery from Fourier samples via this general approach involving compressed sensing ideas has already been
extensively studied \gk{and} many practically, but also highly interesting theoretical \gk{results are} already available. The
maybe first work introducing compressed sensing to this sampling scenario in the setting of MRI is due to
Lustig, Donoho, Santos, and Pauly \cite{LDP07,LDSP08}, who used wavelets as sparsifying system and provided
very convincing empirical evidence of the superiority of this approach.

The first rigorous analysis, which might be considered as a theoretical breakthrough, of this scheme is due
to Krahmer and Ward \cite{KW13}. In their work, they study a finite dimensional setting, use Haar wavelets as
sparsifying system, and solve the inverse problem not by $\ell_1$ minimization, but by TV-regularization.
Intriguingly, by analyzing local coherence properties between wavelets and Fourier elements, they are able
to show that the variable density sampling, whose roots can be detected in \cite{LDP07,LDSP08}, is indeed
natural when aiming for a small incoherence of $(\langle \psi_\lambda,e_n\rangle)_{\lambda \in \Lambda,n \in \Delta}$.

A quite impressive body of work on Fourier sampling is due to Adcock and Hansen et al. In the continuum setting,
in \cite{AHP13,AHKM15} they are able to in fact prove an optimality result for a sampling-reconstruction scheme,
which is though not based on compressed sensing but generalized sampling \cite{AHRT14}. The sparsifying system\gk{s}
are in this case wavelet bases, first in $L^2(\R)$, then in $L^2(\R^2)$. This is however a deterministic strategy,
which is typically inferior to a random sampling-reconstruction scheme. The general theory developed in \cite{AHPR15}
can be identified as another fundamental contribution by these authors, in which they show the power of using the
multiscale structure of a sparsifying orthonormal basis for deriving superior recovery guarantees when considering
the infinite-dimensional setting in compressed sensing.

Finally, we wish to mention the paper \cite{SYSB14} by Shi, Yin, Sankaranarayanan, and Baraniuk, in which the
ideas from \cite{LDP07,LDSP08} are taken on a new level by considering the true 3D data of MRI and exploiting
the appearing joint sparsity of the wavelet coefficient to develop a scheme for dynamic MRI.

Certainly, an abundance of further, mostly empirical contributions does exist, and we refer to \cite{AHRT14}
for additional references.

\subsection{Frames versus Orthonormal Bases} \label{subsec:frames}

The knowledgable reader will have noticed that although in the previously described approaches the sparsifying
representation system $(\psi_\lambda)_{\lambda \in \Lambda}$ always formed an orthonormal basis, sparse
approximation results typically require the more general concept of frames. Indeed, frames provide non-unique
expansions due to their redundancy, thereby allows for much sparse representations and approximations.

A {\em frame} for $\cH$ is a sequence
$(\psi_\lambda)_{\lambda \in \Lambda} \gk{\subseteq \cH}$ satisfying $A \|f\|^2 \le \sum_{\lambda \in \Lambda} |\langle f ,
\psi_\lambda \rangle |^2 \le B \|f\|^2$ for all $f \in \cH$ with $0 < A \le B < \infty$. If  the frame
bounds $A$ and $B$ can chosen to be equal, it is typically called {\em tight frame}; in case of $A=B=1$,
a {\em Parseval frame}. Analysis of an element $f \in \cH$ by a frame $(\psi_\lambda)_{\lambda \in \Lambda}$
is achieved by application of the {\em analysis operator} $T$ given by
\[
T : \cH \to \ell^2(\Lambda), \quad f \mapsto (\langle f,\psi_\lambda \rangle)_{\lambda \in \Lambda}.
\]
Reconstruction of $f$ from the sequence of {\em frame coefficients} $(\langle f,\psi_\lambda \rangle)_{\lambda \in \Lambda}$
is possible by utilizing the adjoint operator $T^*$, since it can be shown that
\[
f = \sum_{\lambda \in \Lambda} \ip{f}{\psi_\lambda} (T^*T)^{-1}\psi_\lambda \quad \mbox{for all } f \in \cH.
\]
The system $((T^*T)^{-1}\psi_\lambda)_{\lambda \in \Lambda}$, required for this reconstruction formula, forms again
a frame, called the {\em canonical dual frame}. In general, a frame $(\tilde{\psi_\lambda})_{\lambda \in \Lambda}$
is typically referred to as {\em (alternate) dual frame}, provided that it satisfies
\[
f = \sum_{\lambda \in \Lambda} \ip{f}{\psi_\lambda} \tilde{\psi}_\lambda
= \sum_{\lambda \in \Lambda} \ip{f}{\tilde{\psi}_\lambda} \psi_\lambda \quad \mbox{for all } f \in \cH.
\]
For more details on frame theory, we refer to \cite{Chr03}.

If the sparsifying system $(\psi_\lambda)_{\lambda \in \Lambda}$ forms a frame, \gk{some} difficulties arise in the
general approach described in Subsection \ref{subsec:generalapproach} due to the fact that a dual frame is
often not accessible in closed form and its computation \gk{is} expensive and typically instable. Since it is
\gk{most of the time only} known that the sequence of frame coefficients
$(\langle f,\psi_\lambda \rangle)_{\lambda \in \Lambda}$ is sparse, instead of the expansion \eqref{eq:sparseapprox}
the expansion
\[
f = \sum_{\lambda \in \Lambda} c_\lambda \tilde{\psi}_\lambda
\]
\gk{has to be considered. Note that this is satisfied for $(c_\lambda = \ip{f}{\psi_\lambda})_{\lambda \in \Lambda}$.}
This \gk{viewpoint} is closely related to the novel framework of {\em co-sparsity} \cite{NDEG12}. This then leads to the
following replacement of \eqref{eq:l1general}:
\[
(\hat{c}_\lambda)_{\lambda \in \Lambda} = \mbox{argmin}_{(\tilde{c}_\lambda)_{\lambda \in \Lambda}}
\|(\tilde{c}_\lambda)_{\lambda \in \Lambda}\|_1 \; \mbox{s.t.} \; \Big(\langle f,e_n \rangle = \sum_{\lambda \in \Lambda}
\langle \tilde{\psi}_\lambda,e_n\rangle \tilde{c}_\lambda\Big)_{n \in \Delta},
\]

\subsection{Notion of Optimality} \label{subsec:opt}

Aiming for optimality of a sampling-reconstruction scheme in an approximation theoretic sense, we will now
assume a continuum model $\mathcal{C} \subseteq L^2(\RR^2)$. Since signals \gk{in} nature \gk{are} almost always of
continuous type \gk{or in other words} we live in a continuous world, this choice seems appropriate.
The {\em optimal best $N$-term approximation rate} is then defined as $O(N^{-\alpha})$ with $\alpha > 0$
maximal such that
\[
\inf_{f_N = \sum_{\lambda \in \Lambda_N} c_\lambda \psi_\lambda, \# \Lambda_N = N}\|f - f_N\|_2 \lesssim N^{-\alpha} \mbox{ as } N \to \infty \mbox{ for all } f \in \cC,
\]
and for any frame $(\psi_\lambda)_{\lambda \in \Lambda}$ for $L^2(\RR^2)$.

Let $(\Delta_M)_M \subseteq \Z^2$ with $\#\Delta_M = M$ and $M \to \infty$ now be a sequence of growing
sampling sets, and let
\[
\mathcal{R}: \mathcal{C} \times \Delta \to L^2(\RR^2), \quad \Delta := \bigcup_M \{\Delta_M\}
\]
be a reconstruction procedure. Then we call a sampling-reconstruction scheme $(\mathcal{C}, \Delta, \mathcal{R})$
{\em asymptotically optimal}, if, for all $f \in \cC$,
\[
\|f - \mathcal{R}(f,\Delta_M)\|_2 \lesssim M^{-\alpha} \mbox{ as } M \to \infty.
\]

\subsection{Anisotropic Model and Shearlet Frames}

In multivariate situations, anisotropic features such as edges in images or shock fronts in the solution of
certain classes of partial differential equations are governing features. In fact, this becomes quite apparent
if considering the data \gk{which} applications such as MRI or EM face. Another interesting aspect is that in fact
neurophysiologists have strong evidence that the neurons in the visual cortex of humans also react very
strongly to those features \wq{\cite{FHH}.} A very classical model for this is the class
of cartoon-like functions, consisting of $L^2$-functions which are $C^2$ apart from a $C^2$ discontinuity
curve, i.e., in particular, piecewise smooth. It was proven by Donoho in \cite{Don01} that the
optimal best $N$-term approximation rate for this model class is $\alpha = 1$.

Assuming this model situation, maybe the most widely used representation system is the multiscale system of shearlets originally
introduced in \cite{GKL06} (see also the survey paper \cite{KL12}). These form a directional representation
system which delivers the optimal approximation rate up to a $\log$-factor \cite{KL11}, but which -- in
contrast to the previously advocated system of curvelets \cite{CD04} -- also allow a faithful implementation
by a unified treatment of the continuum and digital realm \cite{KLR14} and provides compactly supported
variants for high spatial localization \cite{KKL12}. In the compactly supported version, \gk{a} shearlet \gk{system forms}
a non-tight frame requiring knowledge of the dual frame for reconstruction.

Very recently, in addition to the ``\gk{standard} shearlet systems'', a novel type of shearlet systems has been
introduced in \cite{KL15} coined {\em dualizable shearlets}, which satisfy the following key features:
\bitem
\item[(DS1)] Dualizable shearlet systems are composed of orthonormal bases.
\item[(DS2)] These systems have compactly supported elements.
\item[(DS3)] Dualizable shearlet frames possess one dual frame for which a closed formula exists.
\item[(DS4)] They provide optimally sparse approximations of \gk{cartoon-like functions}.
\eitem
This construction will be one of the backbones of the sampling-reconstruction scheme we will introduce
in this paper.

\subsection{Our Contribution}

The main goal in this paper is to introduce a provably optimal sampling-reconstruction scheme for
reconstruction from Fourier measurements. For this, we follow the very general approach outlined in
Subsection \ref{subsec:generalapproach}. We consider with the class of cartoon-like functions a
{\em continuum model} to not obscure continuum elements of geometry and to be consistent with the fact
that we live in a continuous world. In fact, for this data acquisition problem, this is the first time
that a specifically adapted model has been analyzed.

As a sparsifying representation system, we introduce and analyze a {\em variant of dualizable shearlets},
which form\gk{s} a {\em frame}, hence requiring the adaption of the general framework of sampling-reconstruction
schemes to frames \gk{as discussed in} Subsection \ref{subsec:frames}. The redundancy of the reconstruction system
causes significant problems when analyzing their performance within a sampling-reconstruction scheme,
which we tackle by carefully exploiting the inner structure of dualizable shearlets as a union of orthonormal
bases. We expect that such an approach could be useful in general to extend methodologies requiring an
orthnormal basis as sparsifying system to the frame setting.

The sampling sets $\Delta_M$ -- with the asymptotics linked to specific scales -- are designed to
provide maximal incoherence of the measurement matrices given by $A:=(\langle \psi_\lambda,e_n\rangle)_{\lambda \in \Lambda,n \in \Delta}$
measured by the isometry constant $\delta_k$ of the restricted isometry property \cite{DDEK12}
\[
(1-\delta_k) \|x\|_2^2 \le \|A_Sx\|_2^2 \le (1+\delta_k) \|x\|_2^2 \quad \mbox{for all } x,
\]
where $S$ is an index set of size at most $k$ and $A_S$ denotes the selection of columns indexed by $S$.
In fact, we measure it more carefully by considering the local incoherence introduced in \cite{KW13}.
In contrast to the (isotropic) variable density sampling scheme introduced and analyzed in \cite{KW13},
our construction leads not unexpectedly to a set of {\em highly directional random sampling schemes} $\Delta_M$.
It is though interesting to notice in which way the directionality appears, strongly concentrated
along the angle bisectors.

We then prove that \gk{our} novel sampling-reconstruction scheme is indeed asymptotically optimal
(Theorem \ref{thm:main}) in the sense of Subsection \ref{subsec:opt}, which we will make a bit more precise
later \gk{(Definition \ref{defi:optimal})}. To our knowledge this is the first {\em optimality result for sampling-reconstruction schemes} for
Fourier data based on compressed sensing in the continuum setting. Besides the previously mentioned ingredients,
\gk{one further key} idea of our analysis is the approximation of the continuum situation by finite dimensional \gk{objects}
--  in particular, at each level we consider a finite-dimensional shearlet system for reconstruction -- in the
spirit of, yet conceptionally different from the finite section method like approach of generalized sampling
\cite{AHRT14}. Moreover, our asymptotic analysis relies heavily on the multiscale structure of shearlets,
though in contrast to the multiscale sparsity studied in \cite{AHPR15}, we \wq{also investigate sparsity patterns
for cartoon-like functions with respect to directional parameters in the shearlet representation.}

\subsection{Outline}

The paper is organized as follows. We start by introducing the main definitions and results for dualizable shearlets
in Section \ref{sec:dualizable}. Section \ref{sec:scheme} is the\gk{n} devoted to introduce and discuss our novel
sampling-reconstruction scheme, and state our optimality result. Its proof is presented in Section \ref{sec:proof}.
Finally, numerical experiments comparing our scheme with other schemes are discussed in Section \ref{sec:numerics}.

\section{Dualizable Shearlets}\label{sec:dualizable}

Dualizable shearlets are one main ingredient in our sampling-reconstruction scheme, as discussed in
the introduction. Hence this section shall serve as a review of the definition and main properties
of dualizable shearlet frames. For further details we refer to \cite{KL15}.

As already indicated in the introduction, the composition of features (DS1)--(DS4) -- which also distinguishes dualizable
shearlet systems from ``\gk{standard}'' shearlet systems (see \cite{KL12}) -- will enable us to derive an optimal
sampling-reconstruction scheme.

\subsection{Definition and Basic Notions}

The construction consists of two steps. First, so-called shearlet-type wavelet systems are defined, which
are unions of orthonormal bases, indexed by a shearing parameter. Second, a particular set of filters is
used to cut out certain parts of those systems, to ensure directionality. Convolving both then leads
to dualizable shearlet systems. In the spirit of the cone-based definition of \gk{standard} shearlet systems
(cf. \cite{KL12}),
these systems are composed of two parts: a system consisting of functions, for which the essential supports of
their Fourier transforms are horizontally aligned, and a similar system associated with a vertical cone in
Fourier domain -- which is typically the rotated first system. We wish to mention that concerning the
dualizable shearlet system we will only present the construction for the horizontal Fourier cone in detail,
the vertically aligned system will be derived by switching the variables.

As our first step, aiming to construct a family of orthonormal bases for $L^2(\R^2)$ for each shearing direction,
\gk{we} choose two univariate functions $\varphi_1$ and $\psi_1$ to satisfy the following definition. We wish to mention
that \gk{as in the sequel} the symbol $\lesssim$ indicates the asymptotics for $J \to \infty$.

\begin{definition}
Let $\varphi_1,  \psi_1 \in L^2(\R)$ be compactly supported and satisfy the support condition
\[
\delta_{\varphi_1} = \inf_{\xi \in [-\frac12,\frac12]} |\hat\varphi_1(\xi)| > 0,
\]
as well as, for some $\rho \in (0,\frac{2}{13})$, $\alpha \ge \frac{6}{\rho}+1$, and $\beta > \alpha + 1$, the decay conditions
\[
\Bigl|\Bigl(\frac{d}{d\xi}\Bigr)^{\ell}\hat\psi_1(\xi)\Bigr| \lesssim \frac{\min\{1,|\xi|^{\alpha}\}}{(1+|\xi|)^{\beta}}
\quad \mbox{and} \quad
\Bigl|\Bigl(\frac{d}{d\xi}\Bigr)^{\ell}\hat\varphi_1(\xi)\Bigr| \lesssim \frac{1}{(1+|\xi|)^{\beta}}
\quad \mbox{for } \ell=0,1.
\]
We further assume that the system
\[
\{\varphi_1(\cdot-m) : m \in \Z\} \cup \{2^{j/2}\psi_1(2^j\cdot-m) : j \ge 0, m \in \Z\}
\]
forms an orthonormal basis for $L^2(\R)$. Then we refer to the pair $(\varphi_1,  \psi_1)$ as \emph{$(\rho, \alpha, \beta)$-admissible}.
\end{definition}

We remark that the condition to constitute an orthonormal basis can indeed be fulfilled by \cite{Dau92}.

Let now $(\varphi_1,  \psi_1)$ be $(\rho, \alpha, \beta)$-admissible. Based on these univariate functions, for each
$x = (x_1,x_2) \in \R^2$, we define
\beq \label{eq:psi_0_p}
\psi^0(x) := \psi_1(x_1)\varphi_1(x_2) \quad \mbox{and} \quad
\psi^{p}(x) := 2^{(p-1)/2}\psi_1(x_1)\psi_1(2^{p-1}x_2) \; \mbox{for } p > 0
\eeq
\gk{as well as}
\beq \label{eq:varphi_0p}
\varphi^0(x) := \varphi_1(x_1)\varphi_1(x_2) \quad \mbox{and} \quad
\varphi^{p}(x) := 2^{(p-1)/2}\varphi_1(x_1)\psi_1(2^{p-1}x_2) \; \mbox{for } p > 0.
\eeq
In this construction, the parameter $p$ enables a dyadic substructure in vertical direction.
For a fixed integer $j_0 \ge 0$, we now consider the system given by
\[
\{|\mathrm{det}(A_{j_0})|^{1/2}\varphi^p(A_{j_0} \cdot-D_pm), \:
|\mathrm{det}(A_j)|^{1/2}\psi^p(A_j \cdot-D_pm) : j \ge j_0, p \ge 0\},
\]
where $D_p = \mathrm{diag}(1,2^{-\max\{p-1,0\}})$. By \cite{KL15}, this system forms an orthonormal
basis for $L^2(\R^2)$.

We next recall that the shear parameter for a \gk{standard} shearlet system should equal
$\frac{k}{2^{\lceil j/2 \rceil}}$ for $|k| \leq 2^{\lceil j/2\rceil}$. Aiming to parameterize those quotients
we define the injective map
\[
s : \{(j,q) : j = 0, q = 0\} \cup \{j : j \ge 0\} \times \{q : |q| \leq 2^{j}, \, q \in 2 \Z +1\} \to [-1,1], \,\, s(j,q) := \frac{q}{2^{j}},
\]
as well as
\[
\mathbb{S} = \{s(j,q) = 0 : j = 0, q = 0\} \cup \{s(j,q) : j \ge 0,\, |q| \leq 2^{j}, \, q \in 2 \Z +1\}.
\]
These preparations enable us to now defined shearlet-type wavelet systems.

\begin{definition} \label{def:shearlettypewavelets}
Let $\rho \in (0,\wq{\frac{1}{12}})$, $\alpha \ge \frac{6}{\rho}+1$, and $\beta > \alpha + 1$.
Let $\varphi_1, \psi_1 \in L^2(\R)$ with $(\varphi_1, \psi_1)$ being $(\rho, \alpha, \beta)$-admissible,
and let $\varphi^p, \psi^p \in L^2(\R^2)$, $p \ge 0$ satisfy \eqref{eq:psi_0_p} and \eqref{eq:varphi_0p}.
Further, set $D_p := \mathrm{diag}(1,2^{-d_p})$ with $d_p := \max\{p-1,0\}$. Then, for each shear parameter
$s := s(\lceil j_0/2\rceil,q_0) \in \mathbb{S}$, where $j_0$ is the smallest nonnegative integer such that
$s = \frac{q_0}{2^{\lceil j_0/2\rceil}}$, we define the {\em shearlet-type wavelet system} $\Psi_s(\varphi_1, \psi_1)$ by
\[
\Psi_s(\varphi_1, \psi_1) := \{\varphi_{j_0,s,m,p}, \psi_{j,s,m,p} : j \ge j_0, m \in \Z^2, p \ge 0\},
\]
where
\[
\varphi_{j,s,m,p} := |\mathrm{det}(A_j)|^{1/2}\varphi^p(A_jS_s\cdot-D_p m) \;\: \mbox{ and } \;\: \psi_{j,s,m,p} := |\mathrm{det}(A_j)|^{1/2}\psi^p(A_jS_s\cdot-D_p m).
\]
\end{definition}

The second step now consists in defining a filtering procedure. To be able to easier refer to properties
of a generator and an associated filter, we introduce the following notion.

\begin{definition} \label{defi:g}
Let $\rho \in (0,\frac{2}{13})$, $\alpha \ge \frac{6}{\rho}+1$, and $\beta > \alpha + 1$, and
let $g \in L^2(\R^2)$ be a compactly supported function satisfying the conic support condition
\[
\delta_g = \inf_{\xi \in \Omega_{g}} |\hat g(\xi)| > 0, \quad \mbox{where } \Omega_{g} = \{\xi \in \R^2 : |\tfrac{\xi_2}{\xi_1}| < 1, \tfrac12 < |\xi_1| < 1\Big\},
\]
as well as the decay condition
\[
\Bigl|\Bigl(\frac{\partial}{\partial \xi_2}\Bigr)^{\ell}\hat g(\xi)\Bigr| \lesssim \frac{\min\{1,|\xi_1|^{\alpha}\}}{(1+|\xi_1|)^{\beta}(1+|\xi_2|)^{\beta}}
\quad \mbox{for } \ell=0,1.
\]
We then refer to the family of filters $G_s$, $s = s(\lceil j_0 /2\rceil,q_0) \in \mathbb{S}$ defined by
\begin{equation}\label{eq:dfilters}
\hat G_{0}(\xi) = |\hat \varphi^0(\xi)|^2+\sum_{j = 0}^{\infty} |\hat g(A^{-1}_j\xi)|^2
\quad \mbox{and} \quad
\hat G_{s}(\xi) = \sum_{j = j_0}^{\infty} |\hat g(A^{-1}_jS^{-T}_s \xi)|^2 \quad \mbox{for } s \neq 0
\end{equation}
as a \emph{family of $(\rho, \alpha, \beta; g)$-filters}.
\end{definition}

Having introduced the two necessary ingredients, we can now formally define dualizable shearlet systems as follows.
Notice that this definition will indeed require the systems derived by switching the variable in $\Psi_s(\varphi_1, \psi_1)$,
i.e., \wq{by $R = \left(
  \begin{array}{cc}
    0 & 1 \\
    1 & 0 \\
  \end{array}
\right)$.}

\begin{definition}[\cite{KL15}] \label{def:dualizableshearlets}
Let \wq{$\rho \in (0,\frac{1}{12})$}, $\alpha \ge \frac{6}{\rho}+1$, and $\beta > \alpha + 1$.
For any $s \in \mathbb{S}$, let $\Psi_s(\varphi_1, \psi_1)$ be a shearlet-type wavelet system, and let $(G_s)_{s \in \mathbb{S}}$
be a family of $(\rho, \alpha, \beta; g)$-filters. Then
the {\em dualizable shearlet system} $\mathcal{SH}(\varphi_1, \psi_1; g)$ is defined by
\[
\mathcal{SH}(\varphi_1, \psi_1; g) = \{\psi^{\ell}_{\lambda} : \lambda \in \Lambda_s, s \in \mathbb{S}, \ell = 0,1\}
\]
with index set
\[
\Lambda_{s} = \{(j,s,m,p) : j \in \{-1\} \cup \{j_0, j_0+1, \ldots\}, m \in \Z^2, p \ge 0\} \quad \mbox{for } s =  s(\lceil j_0 /2\rceil,q_0),
\]
where
\[
\psi^{0}_{\lambda} =
\left\{
\begin{array}{rcl}
G_s * \varphi_{j_0,s,m,p} & : & \lambda = (-1,s,m,p) \in \Lambda_s,\\
G_s * \psi_{j,s,m,p} & : &  \lambda = (j,s,m,p) \in \Lambda_s,
\end{array}
\right.
\]
and $\psi^{1}_{\lambda} = \psi^{0}_{\lambda} \circ R$.
\end{definition}

The following first two results from \cite{KL15} state the announced properties of orthonormality (DS1) and compact support (DS2).

\begin{proposition}[\cite{KL15}]\label{lemm:ONB}
For each $s \in \mathbb{S}$, the shearlet-type wavelet system $\Psi_s(\varphi_1, \psi_1)$ is an orthonormal basis for $L^2(\R^2)$.
\end{proposition}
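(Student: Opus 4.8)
The plan is to reduce the assertion to the orthonormal basis property of the \emph{unsheared} system displayed immediately before Definition~\ref{def:shearlettypewavelets}, by exhibiting $\Psi_s(\varphi_1,\psi_1)$ as the image of that system under a single unitary operator. The key observation is that, for a fixed shear parameter $s = s(\lceil j_0/2\rceil, q_0) \in \mathbb{S}$, every element of $\Psi_s(\varphi_1,\psi_1)$ is obtained from the corresponding element of the coarse-scale-$j_0$ system by composition with the one fixed shear matrix $S_s$, and that $S_s$ is unimodular. The entire analytic content therefore migrates onto the cited unsheared basis, and only a unitarity check remains.

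First I would fix $s$ and let $j_0$ be the associated smallest nonnegative integer, so that $\Psi_s(\varphi_1,\psi_1)$ consists of the functions $\varphi_{j_0,s,m,p}$ together with $\psi_{j,s,m,p}$ for $j \ge j_0$, $m \in \Z^2$, $p \ge 0$. Denote by
\[
\Phi^{(j_0)} := \{\,|\det(A_{j_0})|^{1/2}\varphi^p(A_{j_0}\cdot - D_p m),\ |\det(A_j)|^{1/2}\psi^p(A_j\cdot - D_p m) : j \ge j_0,\, m \in \Z^2,\, p \ge 0\,\}
\]
the unsheared system from the paragraph preceding Definition~\ref{def:shearlettypewavelets}, which is an orthonormal basis for $L^2(\R^2)$ by \cite{KL15}. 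Defining the composition operator $V_s : L^2(\R^2) \to L^2(\R^2)$ by $(V_s f)(x) := f(S_s x)$, a direct substitution shows $V_s(|\det(A_j)|^{1/2}\varphi^p(A_j\cdot - D_p m)) = \varphi_{j,s,m,p}$ and likewise for the $\psi$-terms, since the shear enters as the innermost factor in $A_j S_s$ and the translations $D_p m$ are unaffected. Hence $V_s$ carries $\Phi^{(j_0)}$ bijectively onto $\Psi_s(\varphi_1,\psi_1)$, matching indices exactly.

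It then remains to verify that $V_s$ is unitary, which follows from $|\det S_s| = 1$: for $f,g \in L^2(\R^2)$ the substitution $y = S_s x$ gives $\langle V_s f, V_s g\rangle = \int f(S_s x)\overline{g(S_s x)}\,dx = \int f(y)\overline{g(y)}\,dy = \langle f,g\rangle$, so $V_s$ is an isometry; moreover it is onto, with inverse $V_s^{-1}f = f(S_s^{-1}\cdot)$, since $S_s$ is invertible. A unitary operator maps every orthonormal basis to an orthonormal basis, so applying $V_s$ to $\Phi^{(j_0)}$ shows that $\Psi_s(\varphi_1,\psi_1)$ is an orthonormal basis for $L^2(\R^2)$, which completes the argument.

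I do not expect a genuine obstacle here: the substantive part — completeness and orthonormality of the underlying tensor construction across the $p$-indexed dyadic substructure — is already packaged in the cited unsheared basis property, and unimodularity of $S_s$ removes any normalization subtlety, so no Jacobian prefactor is needed in $V_s$. The only points requiring care are bookkeeping ones: confirming that the index set of $\Psi_s$ (a single coarse scale $j_0$ for the $\varphi$-terms, all scales $j \ge j_0$ for the $\psi$-terms) matches that of $\Phi^{(j_0)}$ for the correct $j_0$, and that the \emph{same} fixed shear $S_s$ is used at every scale $j$, so that one operator $V_s$ suffices rather than a scale-dependent family.
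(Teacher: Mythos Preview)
Your argument is correct and is exactly the natural reduction: since the paper already records (immediately before Definition~\ref{def:shearlettypewavelets}, citing \cite{KL15}) that the unsheared coarse-scale-$j_0$ system $\Phi^{(j_0)}$ is an orthonormal basis, and since $\Psi_s(\varphi_1,\psi_1) = V_s\Phi^{(j_0)}$ with $V_s$ unitary because $|\det S_s|=1$, the claim follows. The paper itself gives no proof of Proposition~\ref{lemm:ONB} beyond the citation to \cite{KL15}, so there is nothing further to compare; your argument is presumably the one intended, and your bookkeeping checks (same $j_0$, same fixed $S_s$ across all scales) are the right points of care.
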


\begin{proposition}[\cite{KL15}]\label{lem:compact}
Each dualizable shearlet system is compactly supported.
\end{proposition}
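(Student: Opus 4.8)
The plan is to reduce the assertion to a single compact-support statement about the filters $G_s$ and then invoke the elementary fact that the convolution of two compactly supported objects is again compactly supported. First I would record that every generator entering Definition \ref{def:dualizableshearlets} is compactly supported: the univariate $\varphi_1,\psi_1$ are compactly supported by admissibility, hence so are the tensor-type functions $\varphi^p,\psi^p$ of \eqref{eq:psi_0_p}--\eqref{eq:varphi_0p} (for each fixed $p$, products and dilations preserve compact support), and consequently $\varphi_{j_0,s,m,p}$ and $\psi_{j,s,m,p}$, being affine images of such functions through $A_jS_s\cdot-D_pm$, are compactly supported as well. Since $\psi^1_\lambda=\psi^0_\lambda\circ R$ is merely a coordinate reflection, which preserves compact support, it suffices to treat $\psi^0_\lambda=G_s*(\text{generator})$; thus everything reduces to showing that each $G_s$ is compactly supported.

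Second, I would analyze $G_s$ block by block in the Fourier domain via \eqref{eq:dfilters}. Setting $\tilde g(x):=\overline{g(-x)}$, one has $|\hat g|^2=\hat g\,\overline{\hat g}=\widehat{g*\tilde g}$, and $g*\tilde g$ is compactly supported, say $\supp(g*\tilde g)\subseteq K$ for a compact $K\ni 0$. Applying the change-of-variables rule for the Fourier transform with $B:=A_j^{-1}S_s^{-T}$ shows that the single frequency block $|\hat g(A_j^{-1}S_s^{-T}\xi)|^2$ is the Fourier transform of $h_j:=|\det A_j|\,(g*\tilde g)(A_jS_s\,\cdot)$, whose support lies in $S_s^{-1}A_j^{-1}K$. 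The crucial observation is that the anisotropic dilations $A_j^{-1}$ shrink $K$ monotonically, so that $S_s^{-1}A_j^{-1}K\subseteq S_s^{-1}A_{j_0}^{-1}K=:\Omega_s$ for all $j\ge j_0$; hence every $h_j$ lives in the common compact set $\Omega_s$, independently of $j$. For $s=0$ the extra summand $|\hat\varphi^0|^2=\widehat{\varphi^0*\tilde\varphi^0}$ is handled identically, contributing one further compactly supported piece.

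Third, I would use the decay hypotheses on $\hat g$ to verify that $\hat G_s$ is a well-defined bounded function (the factor $\min\{1,|\xi_1|^\alpha\}$ controls the overlap of the blocks at low frequencies, while the factors $(1+|\xi_1|)^{-\beta}(1+|\xi_2|)^{-\beta}$ control their tails), so that $G_s:=(\hat G_s)^\vee$ is a well-defined tempered distribution and the partial sums $\sum_j\hat h_j$ converge to $\hat G_s$ in $\mathcal{S}'$. Since the inverse Fourier transform is continuous on $\mathcal{S}'$, the partial sums $\sum_j h_j$ converge to $G_s$ in $\mathcal{S}'$; testing against any $\phi$ with $\supp\phi\cap\Omega_s=\emptyset$ yields $\langle G_s,\phi\rangle=\sum_j\langle h_j,\phi\rangle=0$, whence $\supp G_s\subseteq\Omega_s$. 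Finally, $\psi^0_\lambda=G_s*(\text{generator})$ has support contained in $\Omega_s+\supp(\text{generator})$, a sum of two compact sets and therefore compact (and $\psi^0_\lambda\in L^2$ since $\hat G_s$ is bounded and $\hat\psi_{j,s,m,p}\in L^2$); as noted, $\psi^1_\lambda$ inherits this, which completes the argument.

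The main obstacle is the convergence-and-support bookkeeping in the third step. Because $\|h_j\|_1=\|g*\tilde g\|_1$ does not decay in $j$, the series $\sum_j h_j$ cannot be summed naively in $L^1$, and one is forced to argue at the level of tempered distributions, relying on the boundedness of $\hat G_s$, in order to legitimately conclude that the limit is supported in the common compact set $\Omega_s$. Establishing that the quantitative decay conditions on $\hat g$ really do render $\hat G_s$ bounded, so that $G_s$ is a genuine distribution rather than a merely formal series, is where the actual work of the proof concentrates.
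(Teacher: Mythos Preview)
The paper does not actually prove this proposition here; it is stated with a citation to \cite{KL15} and no argument is given in the present paper. So there is no ``paper's own proof'' to compare against beyond the reference.

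Your argument is sound and is essentially the natural route: reduce to showing that each filter $G_s$ is compactly supported, observe that each summand $|\hat g(A_j^{-1}S_s^{-T}\,\cdot\,)|^2$ is the Fourier transform of a dilate of $g*\tilde g$ whose support sits inside the fixed compact set $S_s^{-1}A_{j_0}^{-1}K$, and then pass to the limit in $\mathcal{S}'$ using the uniform boundedness of $\hat G_s$. Two small remarks. First, the nesting $S_s^{-1}A_j^{-1}K\subseteq S_s^{-1}A_{j_0}^{-1}K$ as stated needs $K$ to be, say, a centered box rather than the raw support of $g*\tilde g$; simply enlarge $K$ to a cube containing it and the monotonicity of the diagonal entries of $A_j^{-1}$ does the rest. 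Second, your distributional-convergence step is clean because the summands $|\hat g(A_j^{-1}S_s^{-T}\xi)|^2$ are nonnegative, so the partial Fourier sums are pointwise dominated by $\hat G_s\in L^\infty$ and dominated convergence against Schwartz test functions gives $\mathcal{S}'$-convergence immediately; you might state this explicitly rather than leaving it implicit. With those cosmetic clarifications, the proof is complete.
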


The third already mentioned key property, i.e., (DS3), is the explicit form of one of the associated dual frames, made precise in
the following statement.

\begin{theorem}[\cite{KL15}]\label{theo:dualframe}
Let \wq{$\rho \in (0,\frac{1}{12})$}, $\alpha \ge \frac{6}{\rho}+1$, and $\beta > \alpha + 1$. Let $\varphi_1, \psi_1 \in L^2(\R)$
with $(\varphi_1, \psi_1)$ being $(\rho, \alpha, \beta)$-admissible, and let $(G_s)_{s \in \mathbb{S}}$ be a family of
$(\rho, \alpha, \beta; g)$-filters. Further, let $\mathcal{SH}(\varphi_1, \psi_1; g) = \{\psi^{\ell}_{\lambda} : \lambda
\in \Lambda_s, s \in \mathbb{S}, \ell = 0,1\}$ be a dualizable shearlet system, which constitutes a frame for $L^2(\R^2)$.
Then
\[
\widetilde{\mathcal{SH}}(\varphi_1, \psi_1; g) = \{\tilde{\psi}^{\ell}_{\lambda} : \lambda \in \Lambda_s, s \in \mathbb{S}, \ell = 0,1\}
\]
is a dual frame for $\mathcal{SH}(\varphi_1, \psi_1; g)$, where, for $\lambda \in \Lambda_s$,
\[
\hat{\tilde{\psi}}^0_{\lambda} = \frac{\hat \psi^0_{\lambda}}{\sum_{s' \in \mathbb{S}}|\hat{G}_{s'}|^2+|\hat{G}_{s'} \circ \wq{R}|^2}
\quad \mbox{and} \quad
\tilde{\psi}^1_{\lambda} = \tilde{\psi}^0_{\lambda} \circ \wq{R}.
\]
\end{theorem}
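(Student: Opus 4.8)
The plan is to work entirely in the Fourier domain and to show that the frame operator $S$ of $\mathcal{SH}(\varphi_1,\psi_1;g)$ acts as multiplication by the weight
\[
W := \sum_{s' \in \mathbb{S}}\bigl(|\hat{G}_{s'}|^2 + |\hat{G}_{s'}\circ R|^2\bigr).
\]
The formula claimed for the dual is then recognized as $S^{-1}$ applied to each frame element; that is, the stated system is precisely the \emph{canonical} dual frame, from which the dual frame property is immediate.

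The starting observation is structural. For each fixed shear $s = s(\lceil j_0/2\rceil, q_0)$, the defining convolution in Definition \ref{def:dualizableshearlets} gives $\hat{\psi}^0_\lambda = \hat{G}_s\,\hat{\eta}_\lambda$ for $\lambda \in \Lambda_s$, where $\eta_\lambda$ ranges over the elements $\varphi_{j_0,s,m,p},\ \psi_{j,s,m,p}$ of the shearlet-type wavelet system $\Psi_s(\varphi_1,\psi_1)$. By Proposition \ref{lemm:ONB} the family $\{\eta_\lambda\}_{\lambda \in \Lambda_s}$ is an orthonormal basis of $L^2(\R^2)$, hence so is the Fourier-transformed family $\{\hat{\eta}_\lambda\}_{\lambda \in \Lambda_s}$. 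I would first compute the contribution of the $\ell=0$ part to $S$. Using $\langle \hat f, \hat{G}_s \hat\eta_\lambda\rangle = \langle \overline{\hat G_s}\,\hat f, \hat\eta_\lambda\rangle$ together with the reconstruction identity $\sum_{\lambda \in \Lambda_s}\langle h, \hat\eta_\lambda\rangle \hat\eta_\lambda = h$ applied to $h = \overline{\hat G_s}\,\hat f$, the multiplier $\hat G_s$ may be pulled out of the sum, yielding
\[
\sum_{s \in \mathbb{S}}\sum_{\lambda \in \Lambda_s}\langle \hat f, \hat\psi^0_\lambda\rangle\,\hat\psi^0_\lambda = \sum_{s \in \mathbb{S}} |\hat G_s|^2\,\hat f.
\]

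For the $\ell = 1$ part I would exploit the reflection symmetry. Since $\psi^1_\lambda = \psi^0_\lambda \circ R$ and $\widehat{h\circ R} = \hat h \circ R$, and since $\{\hat\eta_\lambda \circ R\}_{\lambda}$ is again an orthonormal basis, the identical argument with $\hat G_s$ replaced by $\hat G_s \circ R$ produces $\sum_{s}|\hat G_s \circ R|^2\,\hat f$. Adding the two contributions shows that $\widehat{Sf} = W\hat f$, i.e. $S = \mathcal{F}^{-1} M_W \mathcal{F}$ is a pure Fourier multiplier. Because $\mathcal{SH}(\varphi_1,\psi_1;g)$ is assumed to be a frame, $S$ is bounded and boundedly invertible, which forces $0 < A \le W \le B < \infty$ almost everywhere. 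Consequently $S^{-1} = \mathcal{F}^{-1}M_{1/W}\mathcal{F}$, so the canonical dual elements satisfy $\hat{\tilde\psi}^0_\lambda = \widehat{S^{-1}\psi^0_\lambda} = \hat\psi^0_\lambda / W$, exactly the claimed formula; and since $R^2 = I$ gives $W\circ R = W$, the same computation yields $\tilde\psi^1_\lambda = \tilde\psi^0_\lambda \circ R$. Being the canonical dual, $\widetilde{\mathcal{SH}}(\varphi_1,\psi_1;g)$ is a dual frame.

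The step I expect to require the most care is the justification of the multiplier pull-out and the convergence of the double sum. Pulling $\hat G_s$ out of the orthonormal expansion is legitimate because multiplication by $\hat G_s$ is a bounded operator on $L^2(\R^2)$ — indeed the frame hypothesis gives $|\hat G_s|^2 \le W \le B$, so $\|\hat G_s\|_\infty \le \sqrt{B}$ — and bounded operators commute with $L^2$-convergent sums. The outer summation over the countable index set $\mathbb{S}$ then converges by dominated convergence, again since the partial sums of $\sum_{s}(|\hat G_s|^2 + |\hat G_s\circ R|^2)$ are dominated by $W \in L^\infty$. Thus all interchanges rest on the two-sided bound on $W$ supplied by the frame assumption together with the decay condition on $\hat g$ in Definition \ref{defi:g}.
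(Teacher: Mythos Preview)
The paper does not supply its own proof of this theorem; it is quoted verbatim from \cite{KL15}. So there is no in-paper argument to compare against.

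That said, your argument is correct and in fact proves slightly more than the theorem asserts: you identify $\widetilde{\mathcal{SH}}(\varphi_1,\psi_1;g)$ as the \emph{canonical} dual frame, not merely some alternate dual. The key structural insight---that for each fixed $s$ the shearlet elements factor as $\hat\psi^0_\lambda = \hat G_s\,\hat\eta_\lambda$ with $\{\eta_\lambda\}_{\lambda\in\Lambda_s}$ an orthonormal basis, so that the $s$-block of the frame operator collapses to the Fourier multiplier $|\hat G_s|^2$---is exactly what (DS1) is designed to deliver, and your use of it is clean. One small point worth making explicit to avoid any appearance of circularity: the bound $\|\hat G_s\|_\infty \le \sqrt{B}$ can be obtained \emph{before} computing $S$, since the upper frame inequality restricted to a single $s$ and $\ell=0$ gives $\sum_{\lambda\in\Lambda_s}|\langle f,\psi^0_\lambda\rangle|^2 \le B\|f\|^2$, and by Parseval for the orthonormal basis $\{\eta_\lambda\}$ the left side equals $\|\hat G_s \hat f\|_2^2$ (here you use that $\hat G_s$ is real-valued, indeed nonnegative, by Definition \ref{defi:g}). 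With that in hand, all your interchanges are justified and the identification $\widehat{Sf} = W\hat f$ follows.
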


\subsection{Sparse Approximation}

We will next discuss the optimal sparse approximation properties dualizable shearlet systems satisfy. This anticipate\gk{d}
behavior was before labeled as (DS4).

Since it is typically assumed that images are governed by anisotropic features, in particular, edges, a common model
is the so-called class of cartoon-like functions. We start by formally introducing this class, which was first defined
in \cite{Don01}.

\begin{definition} \label{defi:cartoon}
The set of \emph{cartoon-like functions} $\cE^2(\RR^2)$ is defined by
\[
\cE^2(\RR^2) = \{f \in L^2(\RR^2) : f = f_0 + f_1 \cdot \chi_{B}\},
\]
where $B \subset [0,1]^2$ is a nonempty, simply connected set with $C^2$-boundary, $\partial B$ has bounded curvature,
and $f_i \in C^2(\RR^2)$ satisfies $\mathrm{supp}  f_i \subseteq [0,1]^2$ and $\|f_i\|_{C^2} \le 1$ for $i=0, 1$.
\end{definition}

In \cite{Don01}, Donoho proved a benchmark result in the sense of a lower bound for the achievable decay
rate of the error of best $N$-term approximation. One should mention that he in fact showed a more general result
than the following statement; however this is what we require for our endeavour.

\begin{theorem}[\cite{Don01}]\label{theo:optimal}
Let $(h_i)_{i \in I} \subseteq L^2(\R^2)$ be a frame for $L^2(\R^2)$. Then, for any $f \in \cE^2(\RR^2)$, the $L^2$-error of
best $N$-term approximation by $f_N$ with respect to $(h_i)_{i \in I}$ satisfies
\[
\|f-f_N\|_2 \gtrsim N^{-1}  \qquad \text{as } N \rightarrow \infty.
\]
\end{theorem}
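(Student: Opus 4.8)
The plan is to read this as a \emph{benchmark} statement: for a fixed frame $(h_i)_{i\in I}$, the best $N$-term approximation rate over the whole class $\cE^2(\RR^2)$ cannot exceed $N^{-1}$. Since any single $f$ could trivially be inserted as a frame element, the genuine content lies in exhibiting, for each $N$, a \emph{subfamily} of cartoon-like functions that is simultaneously too numerous and too well separated in $L^2$ to be captured by only $N$ atoms. Accordingly I would first reduce the claim to the following: if a fixed frame admits $N$-term approximations of uniform error at most $\varepsilon$ on a suitable finite test family, then $N$ must be at least of order $\varepsilon^{-1}$.

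Second, I would build the test family by a hypercube (perturbed-edge) construction at a dyadic scale $j$. Fix a smooth profile $f_1\equiv 1$ near the unit square and a base domain whose boundary is a graph $x_2=E(x_1)$; partition $[0,1]$ into $K\asymp 2^{j}$ strips of width $2^{-j}$ and, on the $k$-th strip, optionally add a $C^2$ bump of height $\asymp 2^{-2j}$ to the edge. The height is chosen maximal subject to keeping the curvature bounded and $\|\cdot\|_{C^2}\le 1$, so that each resulting $f_\omega$, $\omega\in\{0,1\}^{K}$, lies in $\cE^2(\RR^2)$. Computing the area of the symmetric difference of the two subgraphs strip by strip gives $\|f_\omega-f_{\omega'}\|_2^2\asymp 2^{-3j}\,d_H(\omega,\omega')$, where $d_H$ denotes Hamming distance. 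A Gilbert--Varshamov selection then extracts a subfamily $\mathcal F_j$ of cardinality $2^{c2^{j}}$ whose elements are pairwise separated by $\delta_j\asymp 2^{-j}$ in $L^2$.

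Third comes the counting step. Assume the frame achieved uniform $N$-term error below $\delta_j/3$ on $\mathcal F_j$; then the associated approximants are pairwise more than $\delta_j/3$ apart, so their number is at least $|\mathcal F_j|=2^{c2^{j}}$. On the other hand, each approximant is determined by a choice of $N$ indices together with coefficients, and after restricting to the effective finite portion $I_{\mathrm{eff}}$ of the index set relevant at scale $j$ and quantizing the coefficients to resolution $\delta_j$, the number of \emph{distinguishable} $N$-term combinations is at most $\exp\!\big(CN(\log(1/\delta_j)+\log|I_{\mathrm{eff}}|)\big)$. Comparing the two counts forces $N\gtrsim 2^{j}$ up to a logarithmic factor; choosing $j$ so that this inequality is first violated produces an $f\in\cE^2(\RR^2)$ whose best $N$-term error is $\gtrsim N^{-1}$ (modulo the same logarithmic factor), and the construction is available at every scale.

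The main obstacle is precisely this counting estimate for an \emph{arbitrary, infinite, redundant} frame: unlike in the orthonormal case one cannot simply read off which atoms are used, and one must show that only a controlled (polynomially-in-$2^{j}$) number of frame elements can contribute meaningfully to functions localized at scale $j$, and that their coefficients may be coarsely quantized without destroying the $\delta_j$-separation. The truly delicate point is the sharp bookkeeping that removes the logarithmic slack of the naive entropy comparison, so as to obtain the \emph{clean} exponent $N^{-1}$ rather than $(N\log N)^{-1}$; this amounts to arguing that each selected atom costs only $O(1)$ bits in the effective encoding, and this is exactly where I would follow Donoho's original analysis in \cite{Don01}.
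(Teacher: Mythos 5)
First, note that the paper contains no proof of this statement: it is imported verbatim from Donoho \cite{Don01} (the authors even remark that Donoho proved a more general result), so the only meaningful comparison is with Donoho's original argument. Your proposal reproduces its skeleton faithfully: the worst-case (benchmark) reading of the claim — which, as you correctly note, is forced, since $f$ itself could be inserted as a frame element — the hypercube of perturbed-edge cartoon functions at scale $j$ (bump height $\asymp 2^{-2j}$ dictated by the $C^2$/curvature constraint, squared separation $\asymp 2^{-3j}d_H(\omega,\omega')$), the Gilbert--Varshamov extraction of $2^{c2^j}$ functions with pairwise $L^2$-distance $\asymp 2^{-j}$, and the comparison of this cardinality with the number of distinguishable $N$-term approximants. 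This is exactly the hypercube-plus-counting strategy of \cite{Don01}, with the correct parameters.

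The gap, however, is the one you flag in your final paragraph, and it is not a delicate bookkeeping issue that "following Donoho's original analysis" resolves: for an arbitrary frame with unrestricted atom selection the theorem, even in its worst-case reading, is simply false, so no counting argument can close your third step. Concretely, let $(e_n)_n$ be an orthonormal basis of $L^2(\R^2)$, let $\{u_k\}_k$ be a countable dense subset of $\{f/\|f\|_2 : f\in\cE^2(\RR^2),\ f\neq 0\}$, and set $h_k := 2^{-k}u_k$. Then $(e_n)_n\cup(h_k)_k$ is a frame (lower bound $1$ from the basis, upper bound $1+\sum_k 4^{-k}$), yet every $f\in\cE^2(\RR^2)$ satisfies $\inf_{k,\,c}\|f-c\,h_k\|_2=0$: $N$-term approximation places no restriction on coefficient size, so the normalization $2^{-k}$ is invisible and the \emph{one-term} approximants are already dense in $L^2$. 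In particular, no "effective finite portion $I_{\mathrm{eff}}$" of the index set exists, and no coefficient quantization at any resolution is possible. What rescues Donoho's theorem is a hypothesis that your proposal (and, strictly speaking, the paper's paraphrase) omits: \emph{polynomial depth search}, i.e., the $N$ atoms must be chosen among the first $\pi(N)$ elements of a fixed enumeration of the dictionary, with $\pi$ a polynomial. Under that hypothesis your count does close, most cleanly not by quantizing coefficients coordinatewise (your cancellation worry is justified) but by taking a $\delta_j$-net of a bounded ball in the at most $N$-dimensional span of each admissible index set: the number of distinguishable approximants is then at most $\binom{\pi(N)}{N}\,(C/\delta_j)^N \le \exp\bigl(CN\log N + CNj\bigr)$, and comparison with $2^{c2^j}$ yields $N\gtrsim 2^j/j$, hence a worst-case error $\gtrsim (N\log N)^{-1}$. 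This rules out every rate $N^{-\alpha}$ with $\alpha>1$, which is the exponent statement Donoho actually proves and the paper actually needs; neither your argument nor his yields the literal inequality for arbitrary frames, and the logarithmic slack you hoped to remove is intrinsic to this counting scheme.
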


Let \gk{us} next assume that we are given a dualizable shearlet system
\[
\mathcal{SH}(\varphi_1, \psi_1; g) = \{\psi^{\ell}_{\lambda} : \lambda \in \Lambda_s, s \in \mathbb{S}, \ell = 0,1\}
\]
with associated dual frame
\[
\widetilde{\mathcal{SH}}(\varphi_1, \psi_1; g) = \{\tilde{\psi}^{\ell}_{\lambda} : \lambda \in \Lambda_s, s \in \mathbb{S}, \ell = 0,1\}
\]
as defined in Theorem \ref{theo:dualframe}. Utilizing the more compact notation
\[
\Lambda := \{0,1\} \times \bigcup_{s \in \mathbb{S}} \Lambda_s
\]
for the index sets of those systems, we now consider $N$-term approximations of $f \in \cE^2(\RR^2)$ of the form
\[
f_N = \sum_{(\ell,\lambda) \in \Lambda_N} \langle f,\psi^{\ell}_{\lambda}\rangle \tilde{\psi}^{\ell}_{\lambda},
\]
where $\Lambda_N \subseteq \Lambda$, $\# \Lambda_N = N$. It might be surprising to consider expansions in terms
of the dual frame, but in our case we have -- and would like to use -- knowledge on the decay of the frame
coefficients $(\langle f,\psi^{\ell}_{\lambda}\rangle)_{\ell, \lambda}$.

The following result from \cite{KL15} shows that the approximation rate of dualizable shearlets for cartoon-like functions
can be arbitrarily close to the optimal rate as the smoothness of the generators is increased, i.e., as $\rho \to 0$.

\begin{theorem}[\cite{KL15}]\label{thm:sparsity}
Let \wq{$\rho \in (0,\frac{1}{12})$}, $\alpha \ge \frac{6}{\rho}+1$, and $\beta > \alpha + 1$. Let $\varphi_1, \psi_1 \in L^2(\R)$
with $(\varphi_1, \psi_1)$ being $(\rho, \alpha, \beta)$-admissible, and let $(G_s)_{s \in \mathbb{S}}$ be a family of
$(\rho, \alpha, \beta; g)$-filters. Further, let $\mathcal{SH}(\varphi_1, \psi_1; g) = \{\psi^{\ell}_{\lambda} : \lambda
\in \Lambda_s, s \in \mathbb{S}, \ell = 0,1\}$ be a dualizable shearlet system, which constitutes a frame for $L^2(\R^2)$\gk{,
and } let $f \in \mathcal{E}^2(\R^2)$. Then
\[
\|f - f_N\|_2 \lesssim \wq{N^{-1+12\rho}} \cdot \log(N) \qquad \text{as } N \rightarrow \infty,
\]
where $f_N = \sum_{(\ell,\lambda) \in \Lambda_N} \langle f,\psi^{\ell}_{\lambda}\rangle \tilde{\psi}^{\ell}_{\lambda}$ with
$\Lambda_N \subseteq \Lambda$, $\# \Lambda_N = N$ is the $N$-term approximation
using the $N$ largest coefficients $(\langle f,\psi^{\ell}_{\lambda}\rangle)_{\ell,\lambda}$.
\end{theorem}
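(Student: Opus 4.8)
The plan is to reduce the claim to a decay estimate for the \emph{sorted} frame coefficients, and then to establish that decay by the classical edge/smooth-region dichotomy of shearlet analysis, adapted to the extra structure of the dualizable system. First I would exploit the dual-frame expansion supplied by Theorem \ref{theo:dualframe}: writing
\[
f - f_N = \sum_{(\ell,\lambda)\notin\Lambda_N}\langle f,\psi^\ell_\lambda\rangle\,\tilde\psi^\ell_\lambda
\]
and invoking the upper frame bound $\tilde B$ of $\widetilde{\mathcal{SH}}(\varphi_1,\psi_1;g)$, whose synthesis operator is bounded with norm $\sqrt{\tilde B}$, gives
\[
\|f-f_N\|_2^2 \;\le\; \tilde B \sum_{(\ell,\lambda)\notin\Lambda_N}|\langle f,\psi^\ell_\lambda\rangle|^2 .
\]
Since $\Lambda_N$ selects the $N$ largest coefficients, it suffices to control the non-increasing rearrangement $(|c|_{(n)})_n$ of $(|\langle f,\psi^\ell_\lambda\rangle|)_{(\ell,\lambda)}$. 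Establishing the bound $|c|_{(n)}\lesssim n^{-3/2+12\rho}\log n$ would then yield $\sum_{n>N}|c|_{(n)}^2 \lesssim N^{-2+24\rho}(\log N)^2$ (the series converging precisely because $\rho<\tfrac{1}{12}$), and hence the asserted rate $\|f-f_N\|_2\lesssim N^{-1+12\rho}\log N$ after taking square roots.

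Second, I would establish this coefficient decay. By the symmetry $\psi^1_\lambda=\psi^0_\lambda\circ R$ it is enough to treat the horizontal part $\ell=0$, and by locality I would reduce, via a partition of unity along $\partial B$, to estimating coefficients near a single boundary point where the curve is, after an appropriate shear, nearly axis-aligned. I then split the index set according to whether the (compact, by Proposition \ref{lem:compact}) support of $\psi^0_\lambda=G_s*\psi_{j,s,m,p}$ meets $\partial B$. For elements disjoint from the edge, $f$ agrees with a $C^2$ function on the support, so the Fourier-domain decay of $\hat\psi_1$ and $\hat g$ encoded in $(\rho,\alpha,\beta)$-admissibility and in Definition \ref{defi:g} forces rapid decay of $\langle f,\psi^0_\lambda\rangle$ across scales $j$; these contribute negligibly. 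For elements meeting the edge I would use the parabolic scaling $A_j$ together with $S_s$ to obtain the two governing estimates: at scale $j$ the number of coefficients whose orientation is essentially aligned with the edge tangent is $O(2^{j/2})$, each of size $\lesssim 2^{-3j/4}$, while non-aligned orientations and shear offsets decay in the shear index at a rate governed by $\alpha$ and $\beta$.

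Third, I would assemble these per-scale estimates. The union-of-orthonormal-bases structure (Proposition \ref{lemm:ONB}) is what lets me control the additional vertical parameter $p$: summing the Parseval contributions over $p$ and $m$ at fixed $(j,s)$ keeps the number of significant coefficients at scale $j$ of order $2^{j/2}$, so that the sorted sequence lies in the expected weak-$\ell^{2/3}$-type class, perturbed by the finite smoothness. The admissibility constraints $\alpha\ge \frac{6}{\rho}+1$ and $\beta>\alpha+1$ are exactly what make the shear sums and the cross-scale sums converge, and tracking the resulting geometric losses produces the $12\rho$ correction in the exponent; the single extra $\log$-factor arises from summing over the $O(\log N)$ scales that are active for an $N$-term approximation.

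The hard part, I expect, is twofold. The first difficulty is the redundancy: because reconstruction uses the dual frame and the system is a genuine frame rather than an orthonormal basis, I must ensure that the extra parameter $p$ and the filters $G_s$ do not inflate the number of non-negligible coefficients per scale beyond $O(2^{j/2})$; the union-of-ONBs viewpoint and the explicit dual of Theorem \ref{theo:dualframe} are the tools I would lean on here. The second, more delicate difficulty is the \emph{non-aligned} edge estimate with only finite smoothness: controlling coefficients of shearlets whose orientation is transverse to the edge requires quantifying the decay in the shear index from the prescribed polynomial decay of $\hat\psi_1$ and $\hat g$ rather than from infinite smoothness, and it is precisely this quantitative trade-off — balancing the exponents $\alpha,\beta$ against the scale and shear counts through $\rho$ — that produces the loss $12\rho$ and forms the technical core of the argument.
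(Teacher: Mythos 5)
Your overall architecture---bounding $\|f-f_N\|_2$ by the $\ell^2$-tail of the frame coefficients via the upper frame bound of the dual frame, then establishing the decay $|c|_{(n)}\lesssim n^{-3/2+12\rho}\log n$ of the sorted coefficients through an edge/smooth dichotomy and a per-scale count---is indeed the architecture of the proof in \cite{KL15} (this paper only cites that proof, but restates its key ingredients in Section \ref{sec:proof}), and your arithmetic reducing the theorem to that coefficient decay is sound. However, there is a genuine gap at the heart of your second step, and it is precisely the point where dualizable shearlets differ from standard compactly supported shearlets. You split the index set ``according to whether the support of $\psi^0_\lambda=G_s*\psi_{j,s,m,p}$ meets $\partial B$'' and then count $O(2^{j/2})$ aligned elements per scale, each of size $\lesssim 2^{-3j/4}$. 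But the support of $\psi^0_\lambda$ is \emph{not} a parabolic tube of size $2^{-j}\times 2^{-j/2}$: the filter $G_s$ sums contributions over all scales $j'\ge j_0$, where $j_0$ is the coarsest scale at which the shear $s$ occurs, so $\supp(\psi^0_\lambda)\subset S_s^{-1}A_{j_0}^{-1}[-C,C]^2$, i.e.\ it has the size of a scale-$j_0$ shearlet no matter how large $j$ is---the paper states this explicitly when setting up the proof of Theorem \ref{thm:main}. Concretely, for $s=0$ one has $j_0=0$, so every element $\psi^0_{(j,0,m,p)}$ located near $[0,1]^2$ has support of diameter $O(1)$ and hence meets $\partial B$; your dichotomy then classifies essentially all $\sim 2^{3j/2}$ translates at scale $j$ as edge elements, and with the generic size bound $2^{-3j/4}$ the per-scale $\ell^2$-mass is $O(1)$, with no decay across scales. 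The counting argument collapses.

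The missing idea is the approximation device recalled in Proposition \ref{prop:extra_shearlets}: one replaces $\psi_\lambda$ by $\psi^{\sharp}_\lambda$, obtained by truncating the filter sum to $j'\ge\max(j(1-\rho),j_0)$, which has support in a tube of size $\sim 2^{-j(1-\rho)}\times 2^{-\frac{j}{2}(1-\rho)}$ while incurring the controllable error $|\langle f,\psi_\lambda-\psi^{\sharp}_\lambda\rangle|\lesssim 2^{-j\rho\alpha}2^{-p\alpha}\|f\|_2$. Only after this replacement do the edge/smooth dichotomy and the per-scale count make sense, and it is exactly the $2^{j\rho}$-fattening of these supports, together with the $2^{3\rho j}$-losses in the coefficient estimates of Proposition \ref{prop:coeff_decay}, that produces the exponent loss $12\rho$; your plan asserts this constant emerges from ``tracking geometric losses'' but contains no mechanism from which it could arise. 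A secondary weak point: you invoke Parseval (the union-of-orthonormal-bases structure) to control the oversampling parameter $p$, but Parseval only bounds total $\ell^2$-mass and cannot by itself keep the number of \emph{significant} coefficients per scale at $O(2^{j/2})$; what is actually needed is individual decay in $p$, namely $|\langle f,\psi_\lambda\rangle|\lesssim 2^{-\alpha p/2}\|f\|_2$ as in Proposition \ref{prop:control_p}, played against the count of $\sim 2^{2j+p}$ elements per fixed $(j,p)$.
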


Thus, dualizable shearlets do indeed satisfy (DS1)--(DS4).

\section{Optimal Sampling-Reconstruction Scheme}\label{sec:scheme}

We now turn to introducing our sampling-reconstruction scheme and stating the associated optimality result.
But before, we first give a precise notion of optimality of sampling-reconstruction schemes, and -- due to
the technical difficulty of the construction and arguments -- we will also provide some intuition on the
construction of \gk{our} scheme.

\subsection{A Notion of Optimality}

In Subsection \ref{subsec:opt}, we already introduced a very general notion of optimality for general sampling-reconstruction
schemes. We now make this precise in the setting we are considering.

We \gk{choose} our model to be the model of cartoon-like functions $\cE^2(\RR^2)$ as defined in Definition \ref{defi:cartoon})
which, by Theorem \ref{theo:optimal}, admits the following optimal decay rate of the best $N$-term approximation:
\[
\|f - f_N\|_2 \lesssim N^{-1} \mbox{ as } N \to \infty \mbox{ for all } f \in \cE^2(\RR^2),
\]
where $f_N = \sum_{\lambda \in \Lambda_N} c_\lambda \psi_\lambda$ for some frame $(\psi_\lambda)_{\lambda \in \Lambda} \subseteq L^2(\RR^2)$.
Let now $J$ be a positive integer, and assume that we already constructed a set of sampling schemes
\beq \label{eq:sampling}
\Delta_J \subseteq \Z^2, J  > 0, \quad \mbox{with } J \mapsto \#\Delta_J \mbox{ strictly increasing}
\eeq
as well as an associated reconstruction scheme
\beq \label{eq:reconstruction}
\mathcal{R} = \mathcal{R}(\cE^2(\RR^2),\Delta) : \cE^2(\RR^2) \times \Delta \to L^2(\RR^2),
\eeq
where
\beq \label{eq:Delta}
\Delta = \bigcup_{J > 0} \gk{\{\Delta_J\}}.
\eeq

We then introduce the following notion of optimality. We emphasize that we take an asymptotic viewpoint concerning
optimality with $J$ being the parameter which provides the asymptotics. This is consistent with our continuum
model in $L^2(\R^2)$ in the sense that we aim for an optimal behavior in this limit situation.

\begin{definition} \label{defi:optimal}
A sampling-reconstruction scheme $\mathcal{R} = \mathcal{R}(\cE^2(\RR^2),\Delta)$ is called \emph{asymptotically optimal}, if,
for all $f \in \cE^2(\RR^2)$,
\[
\|f - \mathcal{R}(f,\Delta_J)\|_2 \lesssim (\#\Delta_J)^{-1} \quad \mbox{as } J \to \infty.
\]
\end{definition}

\subsection{Intuition of Our Scheme}\label{subsec:intuition}

Before defining the sampling sets $\Delta_J$, $J > 0$, and the reconstruction system similarly dependent on the scale $J$,
we will provide some first intuition for their choice. As already discussed in the introduction, we will choose a particular
version of dualizable shearlet systems. In fact, for each limiting scale $J$, we will choose a finite-dimensional version,
which will turn out to be sufficient to derive an asymptotically optimal sampling-reconstruction scheme. A key ingredient
will be the fact that those reconstruction systems will form orthonormal bases for each shear $s$.
The associated subset of the shearing parameters $\mathbb{S}$ associated with those finite-dimensional (dualizable) shearlet
systems, will be chosen to be
\[
\mathbb{S}_{J/2} := \{s(j,q) = 0 : j = 0, q = 0\} \cup \{s(\lceil j/2\rceil,q) : 0 \leq j \leq J,\, |q| \leq
2^{\lceil j/2 \rceil}, \, q \in 2 \Z +1\}, \; J > 0.
\]

The sampling sets have then to be constructed such that the associated Fourier elements are maximally incoherent with the
reconstruction system. \gk{To choose the sampling points in a suitable way requires analysis of the sparsity pattern of
shearlet expansions of cartoon-like functions.} Two sparsity pattern can be observed \gk{and need to be carefully
handled}:
\bitem
\item {\em Scale Dependence.} The first sparsity pattern is the distribution of sparsity with respect to the scales $j$.
For this, we drive a strategy which is inspired by the variable density sampling strategy from \cite{KW13}, analyzing the
local coherence terms of the shearlet elements and the elements of the Fourier basis. For each $s \in \mathbb{S}_{J/2}$,
a suitable probability density function will then lead to a measurement matrix satisfying the Restricted Isometry Property
with a sufficiently small restricted isometry constant with high probability. We remark that the orthogonality plays a
prominent role in this analysis. \gk{Then}, for each $s$, we will reconstruct via $\ell_1$ minimization, and finally patch the
solutions together.
\item {\em Shear Dependence.} The second sparsity pattern we encounter is the distribution of sparsity with respect to the
shear direction $s$. To keep this under control, we will later slightly restrict the class of cartoon-like functions to
ensure that the significant shearlet coefficients are almost equally distributed with respect to $s$, which in a sense
allows the patching of the solutions after the separate $\ell_1$ minimizations.
\eitem
As can be seen from this discussion, each $\Delta_J$ need\gk{s} to have a special substructure, namely, dependent on
the shear $s \in \mathbb{S}_{J/2}$. In fact, the sampling sets will be constructed as a union of sampling sets
$\Delta_{J,s}$.

\subsection{Sampling Systems}\label{subsec:samplingsystem}

We first define the sampling sets -- and with this the sampling systems --  which we choose for $(\Delta_J)_J$ in
\eqref{eq:sampling}. Letting $J > 0$, the {\em complete} sampling set has the form
\[
\Omega_{J} = \{(n_1,n_2) \in \Z^2 : \max(|n_1|,|n_2|) \lesssim 2^{J(1+\rho)}\}
\]
with associated Fourier basis
\[
\Phi_J = \{e_n := e^{2\pi i \ip{\cdot}{n}} : n = (n_1,n_2) \in \Omega_J\}.
\]

Let now $j_0$ be an arbitrarily fixed scale. We \gk{then} choose a sampling set $\Delta_{J,s}$ for each
$s = s(\lceil j_0/2 \rceil,q_0) \in \mathbb{S}_{J/2}$ of size
\[
m_{J,s} \sim 2^{\frac{J-j_0}{2}}\wq{2^{3J\rho}},
\]
i.e., \wq{dependent of} $s$.
The $m_{J,s}$ sampling points $\Delta_{J,s}$ in $\Omega_J$ are then chosen randomly
according to the probability density function $p_{J,s}$ defined by
\begin{equation}\label{eq:probability}
p_{J,s}(n) = \frac{c_s}{J^2(1+|n_1|)(1+|n_2-sn_1|)}
\end{equation}
over the discrete frequency domain $\Omega_J$, where the constant $c_s$ has obviously to be chosen so that
$\sum_{n \in \Omega_J} p_s(n) = 1$. Note that this condition implies that there exist some positive constants
$C_1$ and $C_2$ such that $C_1 < c_s < C_2$ for all $s \in \mathbb{S}_{J/2}$.

Thus, the random sampling sets $(\Delta_J)_J$ employed in our sampling-reconstruction scheme are chosen as
\beq \label{eq:samplingsets}
\Delta_J := \bigcup_{s \in \mathbb{S}_{J/2}} \Delta_{J,s}
\eeq
with associated sampling systems -- as subsystems of $\Phi_J$ --  given by
\[
\{e_{n} : n \in \Delta_{J,s}, s \in \mathbb{S}_{J/2}\}.
\]

Concerning the number of elements in each sampling set $\Delta_J$, we obtain the following result.

\begin{lemma} \label{lemm:deltaJ}
Let the sampling sets $(\Delta_J)_J$ be defined as in \eqref{eq:samplingsets}. Then
\[
\#\Delta_J \lesssim J \cdot 2^{J/2(1+\wq{6\rho})}.
\]
\end{lemma}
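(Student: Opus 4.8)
The plan is to bound the cardinality of the union by the sum of the cardinalities of its pieces and then estimate that sum by reorganizing it according to the denominators of the shears. Concretely, since $\Delta_J = \bigcup_{s \in \mathbb{S}_{J/2}} \Delta_{J,s}$, regardless of any overlaps between the $\Delta_{J,s}$ we have
\[
\#\Delta_J \le \sum_{s \in \mathbb{S}_{J/2}} m_{J,s} \sim 2^{J/2}\, 2^{3J\rho} \sum_{s \in \mathbb{S}_{J/2}} 2^{-j_0(s)/2},
\]
where for each $s = s(\lceil j_0/2\rceil, q_0) \in \mathbb{S}_{J/2}$ the integer $j_0 = j_0(s)$ is the smallest scale representing $s$, and where I have pulled out the common factor $2^{J/2}2^{3J\rho}$ from $m_{J,s} \sim 2^{(J-j_0)/2}2^{3J\rho}$. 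Thus everything reduces to showing that the weighted shear-count satisfies $\sum_{s \in \mathbb{S}_{J/2}} 2^{-j_0(s)/2} \lesssim J$.

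First I would re-parametrize the nonzero shears of $\mathbb{S}_{J/2}$ by the exponent $k := \lceil j_0/2\rceil$ of the denominator. Because the numerators are odd, the representation $s = q_0/2^{k}$ with $q_0 \in 2\Z+1$ is already in lowest terms and hence unique, so the nonzero shears are in bijection with pairs $(k,q_0)$ satisfying $0 \le k \le \lceil J/2\rceil$, $q_0 \in 2\Z+1$, and $|q_0| \le 2^{k}$. For $k \ge 1$ the smallest scale with $\lceil j_0/2\rceil = k$ is $j_0 = 2k-1$, while the number of admissible odd numerators $q_0$ with $|q_0| \le 2^{k}$ is exactly $2^{k}$.

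The key observation, and the reason the estimate comes out clean, is that these two quantities balance scale by scale: each level $k \ge 1$ contributes
\[
\sum_{q_0 \in 2\Z+1,\ |q_0| \le 2^{k}} 2^{-j_0/2} = 2^{k}\cdot 2^{-(2k-1)/2} = 2^{1/2},
\]
a constant independent of $k$. Adding the $O(1)$ contribution of the level $k=0$ (the shears $s = 0, \pm 1$, all with $j_0 = 0$) and summing over the $\lceil J/2\rceil + O(1)$ levels gives $\sum_{s \in \mathbb{S}_{J/2}} 2^{-j_0(s)/2} \lesssim J$. Reinserting the common factor then yields
\[
\#\Delta_J \lesssim J\cdot 2^{J/2}\,2^{3J\rho} = J\cdot 2^{\frac{J}{2}(1+6\rho)},
\]
as claimed.

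I would not expect a genuine obstacle here; the single point that needs care, and which I would single out as the heart of the argument, is the bookkeeping for $j_0(s)$ via the denominator exponent $k$, namely recognizing that the exponential growth $2^{k}$ in the number of shears at denominator level $2^{k}$ is exactly cancelled by the decay $2^{-j_0/2}\sim 2^{-k}$ of the per-shear sampling budget, leaving only a single factor of $J$ from summing over the $\sim J/2$ scales.
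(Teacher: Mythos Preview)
Your proof is correct and follows essentially the same approach as the paper: bound $\#\Delta_J$ by $\sum_{s} m_{J,s}$, group the shears by scale level, and observe that the number of shears at each level exactly balances the per-shear factor $2^{-j_0/2}$, leaving only a factor of $J$ from the number of levels. The paper's own proof is the one-line computation $\sum_{j_0=0}^J 2^{j_0/2}\cdot 2^{(J-j_0)/2}2^{3J\rho} \lesssim J\cdot 2^{J/2(1+6\rho)}$; your version is the same idea with the bookkeeping via $k=\lceil j_0/2\rceil$ spelled out more carefully.
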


\begin{proof}
We have
\[
\#\Delta_J = \sum_{s \in \mathbb{S}_{J/2}} \wq{m_{J,s}} \lesssim \sum_{j_0 = 0}^{J} 2^{j_0/2}2^{\frac{J-j_0}{2}}\wq{2^{3J\rho}} \lesssim J \cdot 2^{J/2(1+\wq{6\rho})}.
\]
This proves the claim.
\end{proof}

\subsection{Reconstruction Systems}

We continue by describing the representation system\gk{s} we will employ for the reconstruction step.
For this, we let $\rho \in (0,\wq{\frac{1}{12}})$, $\alpha \ge \frac{6}{\rho}+1$, and $\beta > \alpha + 1$. Further, we
choose $\varphi_1, \psi_1 \in L^2(\R)$ with $(\varphi_1, \psi_1)$ being $(\rho, \alpha, \beta)$-admissible, and
$(G_s)_{s \in \mathbb{S}}$ to form a family of $(\rho, \alpha, \beta; g)$-filters. Let then
$\mathcal{SH}(\varphi_1, \psi_1; g) = \{\psi^{\ell}_{\lambda} : \lambda \in \Lambda_s, s \in \mathbb{S}, \ell = 0,1\}$
be the associated dualizable shearlet system.

We next define finite-dimensional shearlet systems depending on $J$ and $s$, which will serve as sparsifying systems
for any fixed $J$. For this, for each $s = s(\lceil j_0/2 \rceil,q_0) \in \mathbb{S}_{J/2}$, we define the index set
associated with those shearlets whose support \gk{basically} intersects the support of the cartoon-like function $f$, i.e.,
\beq \label{eq:L_sJ}
\hspace*{-0.2cm}\tilde{\Lambda}_{J,s} = \{\lambda = (j,s,m,p) \in \Lambda_s : |\supp(\psi_{\lambda}) \cap [0,1]^2| \neq 0,  m \in \Z^2, p \leq \frac{J\rho}{2}, j = j_0-1,\dots,J\}.
\eeq
Recall that the support of $f$ is contained in $[0,1]^2$. As the observant reader will have noticed, we bounded the
parameter $p$, which will ensure that we now have ``only'' a finite-dimensional system to handle. Based on this index set,
we next define a system $\Psi_{J,s}$, with the property that $G_s * \Psi_{J,s}$ is a subsystem of $\mathcal{SH}(\varphi_1,\psi_1;g)$.
The reason for this choice will become clear in the next subsection.

\begin{definition}
Letting $\rho \in (0,\wq{\frac{1}{12}})$, $\alpha \ge \frac{6}{\rho}+1$, and $\beta > \alpha + 1$. Further, we
choose $\varphi_1, \psi_1 \in L^2(\R)$ with $(\varphi_1, \psi_1)$ being $(\rho, \alpha, \beta)$-admissible, and
$(G_s)_{s \in \mathbb{S}}$ to form a family of $(\rho, \alpha, \beta; g)$-filters. Let also
$\mathcal{SH}(\varphi_1, \psi_1; g) = \{\psi^{\ell}_{\lambda} : \lambda \in \Lambda_s, s \in \mathbb{S}, \ell = 0,1\}$
denote the associated dualizable shearlet system. For each $J > 0$ and $s \in \mathbb{S}_{J/2}$, we then refer to
\[
\Psi_{J,s} = \{\sigma_{\lambda} := \mathcal{F}^{-1}(\hat \psi_{\lambda}/\hat G_s):\psi_{\lambda} \in \mathcal{SH}(\varphi_1, \psi_1; g), \lambda \in \tilde{\Lambda}_{J,s}\},
\]
where $\tilde{\Lambda}_{J,s}$ is defined as in \eqref{eq:L_sJ}, as the \emph{dualizable shearlet-reconstruction system for limiting scale $J$
and for shear direction $s$.}
\end{definition}

We can immediately derive some basic properties of these systems.

\begin{lemma}\label{lemm:prop_Psi_sJ}
Let $\Psi_{J,s}$ be a shearlet-reconstruction system for limiting scale $J$ and for shear direction $s$.
Then all elements in $\Psi_{J,s}$ are compactly supported, $\Psi_{J,s}$ forms an orthonormal basis for $\spann(\Psi_{J,s})$,
and $\dim(\spann(\Psi_{J,s})) \lesssim 2^{J/2(3+\rho)}$ as $J \to \infty$
\end{lemma}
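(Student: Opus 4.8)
The plan is to verify the three claims separately, and to exploit in each case the fact that $\Psi_{J,s}$ is constructed from the shearlet-type wavelet system $\Psi_s(\varphi_1,\psi_1)$ by the deconvolution $\sigma_\lambda = \mathcal{F}^{-1}(\hat\psi_\lambda/\hat G_s)$, so that by the very definition of $\psi_\lambda^0 = G_s * \psi_\lambda$ in Definition \ref{def:dualizableshearlets} we have $\sigma_\lambda = \psi_\lambda$, i.e.\ the elements of $\Psi_{J,s}$ are precisely the elements $\psi_{j,s,m,p}$ (and $\varphi_{j_0,s,m,p}$) of the shearlet-type wavelet system indexed by $\lambda \in \tilde\Lambda_{J,s} \subseteq \Lambda_s$. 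This identification is the key observation, since it converts every assertion about $\Psi_{J,s}$ into an assertion about a finite subcollection of an orthonormal basis whose structure is already understood.

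First I would establish compact support. Since $(\varphi_1,\psi_1)$ is $(\rho,\alpha,\beta)$-admissible, both $\varphi_1$ and $\psi_1$ are compactly supported by definition, hence so are the tensor/dilated products $\varphi^p,\psi^p$ of \eqref{eq:psi_0_p}--\eqref{eq:varphi_0p}. Each $\sigma_\lambda = \psi_{j,s,m,p}$ is obtained from these by applying the invertible affine maps $A_j S_s$ and a translation by $D_p m$, which preserve compact support; thus every element of $\Psi_{J,s}$ is compactly supported. Second, for the orthonormal-basis claim, I would invoke Proposition \ref{lemm:ONB}, which asserts that the full system $\Psi_s(\varphi_1,\psi_1)$ is an orthonormal basis for $L^2(\R^2)$. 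Orthonormality is inherited by any subfamily, so the elements indexed by $\tilde\Lambda_{J,s}$ are orthonormal; being an orthonormal set, they trivially form an orthonormal basis for their own closed linear span. Here it is essential that $\tilde\Lambda_{J,s} \subseteq \Lambda_s$ consists of indices from a single shear $s$, so that we genuinely sit inside one orthonormal basis rather than a redundant union over $\mathbb{S}$.

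The third claim, the dimension count, is where the actual work lies, and I expect it to be the main obstacle. Here $\dim(\spann \Psi_{J,s}) = \#\tilde\Lambda_{J,s}$ by orthonormality, so I must count the indices $\lambda = (j,s,m,p)$ subject to the constraints in \eqref{eq:L_sJ}: $j$ ranges over $j_0-1,\dots,J$, $p$ ranges over $0 \le p \le \tfrac{J\rho}{2}$, and $m \in \Z^2$ is restricted to those translates for which $\supp(\psi_\lambda)$ meets $[0,1]^2$. The plan is to bound, for fixed $j$ and $p$, the number of admissible $m$ by a volume argument: since $\psi_{j,s,m,p}$ is a translate by $D_p m$ of a fixed compactly supported function composed with $A_j S_s$, the effective sampling density in space is governed by $|\det(A_j)|^{1/2}$ together with the anisotropic grid $D_p = \mathrm{diag}(1,2^{-d_p})$, and only $O(|\det A_j|\,2^{d_p})$ lattice points have support overlapping the unit square. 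Summing the per-scale contribution geometrically over $j \le J$ (dominated by the top scale $j=J$) and over the $O(J\rho)$ admissible values of $p$ (contributing the factor $2^{d_p} \le 2^{J\rho/2}$ at the extreme), I would obtain the stated bound $\lesssim 2^{J/2(3+\rho)}$ as $J\to\infty$. The delicate point to handle carefully is the precise anisotropic dilation matrix $A_j$ and the interplay between the vertical refinement parameter $p$ (via $D_p$) and the shear $S_s$, since a naive isotropic count would miss the sharp exponent; keeping track of the correct powers of $2$ in both coordinates is what produces the exponent $3+\rho$ rather than a looser bound.
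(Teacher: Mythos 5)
Your proposal is correct and follows essentially the same route as the paper: identify $\sigma_\lambda = \psi_{j,s,m,p}$ so that $\Psi_{J,s} \subset \Psi_s(\varphi_1,\psi_1)$, deduce compact support from the compactly supported generators, get orthonormality from Proposition \ref{lemm:ONB} by restriction to a subfamily, and count roughly $2^{\frac{3}{2}j}2^{p}$ indices per pair $(j,p)$, summing over $j \le J$ and $p \le \frac{J\rho}{2}$ to obtain $\#\tilde\Lambda_{J,s} \lesssim 2^{\frac{J}{2}(3+\rho)}$. The only cosmetic difference is that the paper attributes compact support to Proposition \ref{lem:compact} rather than arguing directly from the admissibility of $(\varphi_1,\psi_1)$ as you do, which is if anything the cleaner justification since the $\sigma_\lambda$ are shearlet-type wavelets rather than filtered shearlets.
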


\begin{proof}
Compact support immediately follows from Proposition \ref{lem:compact}. \gk{Moreover,} Proposition \ref{lemm:ONB}
implies orthogonality, since $\Psi_{J,s} \subset  \Psi_s(\varphi_1,\psi_1)$. The remaining claims follow from the
observation that there exist about $2^{\frac{3}{2}j}2^{p}$ elements $\sigma_{\lambda} \in \Psi_{J,s}$ for each
parameter $j$ and $p \ge 0$, hence the bounds $j \leq J$ and $p \leq \frac{J\rho}{2}$ imply  $\sharp(\Psi_{J,s})
\lesssim 2^{\frac{J}{2}(3+\rho)}.$
\end{proof}

\subsection{Our Sampling-Reconstruction Scheme}\label{subsec:scheme1}

We are now in a position to introduce the reconstruction scheme we choose for $\mathcal{R} = \mathcal{R}(\cE^2(\RR^2),\Delta)$
in \eqref{eq:reconstruction}, with $\Delta = \bigcup_{J > 0} \gk{\{\Delta_J\}}$ as just defined. To develop our scheme, for any
$f \in \cE^2(\RR^2)$ and $J > 0$, we need to define $\mathcal{R}(f,\Delta_J)$. We remark that in fact we will define
$\mathcal{R}(f,\Delta_J)$ based on reconstructions from $\Delta_{J,s}$ for each $s \in \mathbb{S}_{J/2}$.

Now let $f \in \cE^2(\RR^2)$ and fix $J$. Moreover, let $s \in \mathbb{S}_{J/2}$ be arbitrarily fixed for now. We first expand
some $g \in L^2(\RR^2)$ in terms of our chosen reconstruction system $\Psi_{J,s}$. Since this system however only spans its
closed linear hull \gk{and not $L^2(\RR^2)$}, \gk{we only obtain an expansion of the type}
\begin{equation}\label{eq:projection}
P_{\Psi_{J,s}}(g) = \sum_{\lambda \in \tilde{\Lambda}_{J,s}} \ip{g}{\sigma_\lambda}\sigma_\lambda,
\end{equation}
\gk{where $P_{\Psi_{J,s}}$ denotes the orthogonal projection onto $\spann(\Psi_{J,s})$.}
Notice that \gk{the form of} this expansion is only possible due to the fact that $\Psi_{J,s}$ forms an orthogonal basis, see Lemma \ref{lemm:prop_Psi_sJ}.
Next, we set $g := \overline{G}_s * f$ and take the inner product of the previous equation with some element $e_n$, $n \in \ZZ^2$
of the Fourier basis, leading to
\[
\ip{P_{\Psi_{J,s}}(\overline{G}_s * f)}{e_n} = \sum_{\lambda \in \tilde{\Lambda}_{J,s}} \ip{\overline{G}_s * f}{\sigma_\lambda} \ip{\sigma_\lambda}{e_n}.
\]
Recalling that by definition $\sigma_{\lambda} := \mathcal{F}^{-1}(\hat \psi_{\lambda}/\hat G_s)$ and using Plancherel twice,
we obtain
\beq \label{eq:lse}
\ip{P_{\Psi_{J,s}}(\overline{G}_s * f)}{e_n} = \sum_{\lambda \in \tilde{\Lambda}_{J,s}} c_\lambda \ip{\sigma_\lambda}{e_n},
\eeq
where
\beq \label{eq:lse1}
c_\lambda =  \ip{\overline{G}_s * f}{\sigma_\lambda} = \ip{f}{\psi_{\lambda}}.
\eeq

We now claim that in applications, if we have access to $\hat{f}(n)$, we also have access to $\ip{P_{\Psi_{J,s}}(\overline{G}_s * f)}{e_n}$.
For this, first note that in computations we can only compute up to a particular scale. Hence the projection up to scale $J$ for $J$ large
enough is no restriction. Second, the filter $\overline{G}_s$ filters out the part of $f$ which corresponds to $\spann(\Psi_{J,s})$,
hence can also be regarded as only caus\gk{ing} a \gk{small,} controllable error. Finally, by Plancherel, we have
\[
\ip{\overline{G}_s * f}{e_n} = \hat{\overline{G}}_s(n) \cdot \hat{f}(n),
\]
which we indeed can compute knowing the action of the chosen filter $\hat{\overline{G}}_s(n)$ as well as $\hat{f}(n)$.

Applying now our choice for the sampling sets $\Delta_{J,s}$ from Subsection \ref{subsec:samplingsystem}, the linear
system of equations \eqref{eq:lse} becomes
\beq \label{eq:tosolve}
\ip{P_{\Psi_{J,s}}(\overline{G}_s * f)}{e_{n}} = \sum_{\lambda \in \tilde{\Lambda}_{J,s}} c_\lambda \ip{\sigma_\lambda}{e_{n}},
\quad n \in \Delta_{J,s}.
\eeq
Our reconstruction scheme solves this linear system of equations by exploiting the sparse approximation property of our chosen
reconstruction system, i.e., we set
\beq \label{eq:recscheme}
\mathcal{R}(f,\Delta_J) := \sum_{s \in \mathbb{S}_{J/2}} \sum_{\lambda \in \tilde{\Lambda}_{J,s}} \hat{c}_{\lambda} \tilde{\psi}_{\lambda},
\eeq
where
\begin{equation}\label{eq:l1_min}
(\hat{c}_{\lambda})_{\lambda \in \tilde{\Lambda}_{J,s}} = \argmin\|(\tilde{c}_{\lambda})_{\lambda \in \tilde{\Lambda}_{J,s}}\|_1
\mbox{ subject to } \Bigg(\langle P_{\Psi_{J,s}}(\overline{G}_s * f), e_n \rangle = \sum_{\lambda \in \tilde{\Lambda}_{J,s}} \tilde{c}_{\lambda}
\langle \sigma_{\lambda},e_n\rangle \Bigg)_{n \in \Delta_{J,s}}\hspace*{-1cm}.
\end{equation}
In the sequel, instead of \eqref{eq:tosolve}, we will often use the compact notation
\begin{equation}\label{eq:cs_system}
{\bf y_{J,s}} = A_{J,s} {\bf c_{J,s}}
\end{equation}
where ${\bf y_{J,s}} = (\langle P_{\Psi_{J,s}}(\overline{G}_s * f), e_{n} \rangle)_{n \in \Delta_{J,s}}$, $(A_{J,s})_{n,\lambda}
= \langle \sigma_{\lambda}, e_{n}\rangle$, and ${\bf c_{J,s}} = (c_{\lambda})_{\lambda \in \tilde{\Lambda}_{J,s}}$.

We finally wish to point out that in fact the very specific inner structure of the dualizable shearlet systems as well as of the associated
dualizable shearlet-reconstruction systems only enabled this form of the linear system of equations. In fact, each of (DS1)--(DS3) have been
exploited. The sparse approximation property (DS4) will then ensure that $\ell_1$ minimization recovers an approximation with optimally
small asymptotic error.

\subsection{Main Result}

We now have the sampling scheme (\eqref{eq:Delta} and \eqref{eq:samplingsets}) and the associated reconstruction
scheme (\eqref{eq:recscheme} \gk{and \eqref{eq:l1_min})} at hand. The following theorem shows that our scheme is asymptotically optimal
in the sense of Definition \ref{defi:optimal} for almost all cartoon-like functions. In fact, \gk{as announced
in Subsection \ref{subsec:intuition}} we slightly restrict the set of cartoon-like functions to the set
\begin{eqnarray*}
\tilde{\cE}^{2}(\RR^2) & := & \left\{f \in \cE^2(\RR^2) : f \in C^{2,r} \mbox{ smooth with } r \in [1/4,1) \mbox{ everywhere except for points}\right.\\
& & \hspace{0.2cm} \left. \mbox{lying on a } C^2 \mbox{ curve of non-vanishing curvature}\right\}.
\end{eqnarray*}
This condition ensures that the direction of the singularity curve of a cartoon-like function changes with a
certain rate bounded from above, which leads to an almost equal distribution of the significant shearlet
coefficients with respect to the shear $s \in \mathbb{S}_{J/2}$ at a fixed scale $J$. One instance of this can
be seen in Proposition \ref{prop:coeff_decay}(ii).

We now have all ingredients to state the main result of this paper.

\begin{theorem}\label{thm:main}
Letting $\rho \in (0,\frac{2}{13})$, $\alpha \ge \frac{6}{\rho}+1$,
and $\beta > \alpha + 1$, we further choose $\varphi_1, \psi_1 \in L^2(\R)$ with $(\varphi_1, \psi_1)$ being
$(\rho, \alpha, \beta)$-admissible, and $(G_s)_{s \in \mathbb{S}}$ to form a family of $(\rho, \alpha, \beta; g)$-filters.
For each $J > 0$ and $s \in \mathbb{S}_{J/2}$, let also $\Psi_{J,s}$ denote the associated dualizable shearlet-reconstruction
system for limiting scale $J$ and for shear direction $s$. Finally, let the sampling scheme $\Delta$ be chosen as in
\eqref{eq:Delta} and \eqref{eq:samplingsets}, and let the reconstruction scheme $\mathcal{R}$ be defined as in \eqref{eq:recscheme} \gk{and \eqref{eq:l1_min}}.
Then there exists some universal constant $C > 0$ (in particular, independent on $\rho$), such that, for each
$f \in \tilde{\cE}^{2}(\RR^2)$,
\[
\|f - \mathcal{R}(f,\Delta_J)\|_2 \lesssim (\#\Delta_J)^{-1+\rho C} \quad \mbox{as } J \to \infty,
\]
i.e., $\mathcal{R} : \tilde{\cE}^2(\RR^2) \times \Delta \to L^2(\RR^2)$ is arbitrarily close to being asymptotically optimal.
\end{theorem}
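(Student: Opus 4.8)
The plan is to control $\|f - \mathcal{R}(f,\Delta_J)\|_2$ by splitting it into a \emph{truncation error} and a \emph{recovery error}. Writing $c_\lambda = \ip{f}{\psi_\lambda}$, the dual-frame reconstruction formula of Theorem \ref{theo:dualframe} yields $f = \sum_{\lambda} c_\lambda \tilde\psi_\lambda$, so that
\[
\|f - \mathcal{R}(f,\Delta_J)\|_2 \le \Big\|\sum_{\lambda \notin \bigcup_s \tilde\Lambda_{J,s}} c_\lambda \tilde\psi_\lambda\Big\|_2 + \Big\|\sum_{s \in \mathbb{S}_{J/2}}\sum_{\lambda \in \tilde\Lambda_{J,s}} (c_\lambda - \hat c_\lambda)\tilde\psi_\lambda\Big\|_2.
\]
Invoking the upper frame bound $\tilde B$ of $\widetilde{\mathcal{SH}}(\varphi_1,\psi_1;g)$, the second summand is bounded by $\sqrt{\tilde B}\,(\sum_{s}\|\hat c_{J,s} - c_{J,s}\|_2^2)^{1/2}$, reducing the recovery error to an $\ell^2$-aggregate of the per-shear coefficient errors. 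Here the union-of-orthonormal-bases structure pays off: for each fixed $s$ the problem decouples into an independent finite-dimensional compressed sensing instance (by $R$-symmetry it suffices to treat the horizontal cone).

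Next I would establish the \emph{restricted isometry property} for each normalized measurement matrix $A_{J,s}$. By Lemma \ref{lemm:prop_Psi_sJ}, $\Psi_{J,s}$ is an orthonormal basis of its span, so $A_{J,s}$ is a randomly subsampled isometry and the variable-density framework via local coherence applies. The key computation is the local coherence profile $n \mapsto \sup_\lambda |\ip{\sigma_\lambda}{e_n}|$ of the sheared, filtered atoms against the Fourier exponentials; one checks that the sampling density $p_{J,s}$ in \eqref{eq:probability}, whose sheared coordinate $n_2 - s n_1$ mirrors the shear matrix $S_s$, dominates this profile. Feeding this into a Krahmer--Ward-type RIP-from-local-coherence estimate shows that $m_{J,s} \sim 2^{(J-j_0)/2}2^{3J\rho}$ samples suffice for $A_{J,s}$ to satisfy RIP of the required order with a sufficiently small constant and controllable failure probability. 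A union bound over the $\sim 2^{J/2}$ shears in $\mathbb{S}_{J/2}$ is then taken, and the oversampling factor $2^{3J\rho}$ in $m_{J,s}$ is exactly what absorbs the resulting logarithmic penalty.

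With RIP in hand, I would invoke the standard noiseless $\ell_1$-recovery guarantee. Since by \eqref{eq:lse}--\eqref{eq:lse1} the true coefficient vector $c_{J,s}$ is feasible for \eqref{eq:l1_min}, one obtains $\|\hat c_{J,s} - c_{J,s}\|_2 \lesssim \sigma_{k_s}(c_{J,s})_1/\sqrt{k_s}$, with $\sigma_{k_s}(\cdot)_1$ the best $k_s$-term $\ell_1$ defect. The defect is controlled by the coefficient-decay estimates of Proposition \ref{prop:coeff_decay}, whose part (ii) supplies the control of the \emph{shear distribution} of the significant coefficients --- precisely the reason for passing from $\cE^2(\RR^2)$ to the restricted class $\tilde\cE^2(\RR^2)$. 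Summing these bounds over all $s \in \mathbb{S}_{J/2}$ yields the recovery-error contribution.

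Finally, the truncation error $\|\sum_{\lambda \notin \bigcup_s \tilde\Lambda_{J,s}} c_\lambda \tilde\psi_\lambda\|_2$ is estimated as a finite-scale version of Theorem \ref{thm:sparsity}: the tail over scales $j > J$ and substructure parameters $p > J\rho/2$, together with the atoms not meeting $[0,1]^2$, contributes at most $2^{-\frac{J}{2}(1-c\rho)}$ for some constant $c$. Both contributions combine to $\|f - \mathcal{R}(f,\Delta_J)\|_2 \lesssim 2^{-\frac{J}{2}(1-c\rho)}$, which by Lemma \ref{lemm:deltaJ} and $\#\Delta_J \sim J\cdot 2^{\frac{J}{2}(1+6\rho)}$ converts to the claimed $(\#\Delta_J)^{-1+C\rho}$. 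I expect the main obstacle to lie in the interplay between the second and third steps: balancing the per-shear budget $m_{J,s}$ against both the union bound over the $\sim 2^{J/2}$ shears and the shear distribution of the coefficients, since the frame redundancy lets the $\sim 2^{J/2}$ per-shear errors accumulate, and only the almost-equal distribution guaranteed on $\tilde\cE^2(\RR^2)$ keeps their $\ell^2$-aggregate at the optimal rate.
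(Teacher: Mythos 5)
Your proposal follows essentially the same route as the paper: the same decomposition into per-shear recovery errors (bounded via the dual-frame synthesis bound) plus scale and oversampling truncation tails, RIP for the density-subsampled per-shear orthonormal systems (the paper inlines exactly your local-coherence computation, $\sup_{\lambda} |\hat\sigma_\lambda(n)|^2/p_{J,s}(n) \lesssim J^2$, together with Rauhut's moment bounds in Proposition \ref{prop:cs_aniso_wavelet}), the standard RIP-based $\ell_1$ recovery guarantee (Theorem \ref{thm:candes}), control of the $\ell_1$ defect through Proposition \ref{prop:coeff_decay} on the restricted class $\tilde{\cE}^{2}(\RR^2)$, and the final conversion via Lemma \ref{lemm:deltaJ}. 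The only minor imprecision is attributing the shear-distribution control to Proposition \ref{prop:coeff_decay}(ii); in the paper this control comes from part (i) combined with the non-vanishing-curvature counting estimate \eqref{eq:number_of_indices}, while (ii) handles the smooth part via the $C^{2,r}$ assumption, but this does not change the architecture.
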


Thus, our sampling-reconstruction scheme for reconstruction of the model class of cartoon-like functions from Fourier measurements
is indeed asymptotically optimal, since $\rho$ can be chosen arbitrary small.

\section{Proofs}\label{sec:proof}

This section is devoted to the proof of Theorem \ref{thm:main}. We start with the overall structure and architecture
of this proof, and then slowly delve into the details.

\subsection{Architecture of the Proof of Theorem \ref{thm:main}}

As could already been predicted from the description of our sampling-reconstruction scheme in Subsection \ref{subsec:scheme1}, also
the proof relies heavily on the specific inner structure of dualizable shearlets, more precisely, of the dualizable shearlet-reconstruction
systems for limiting scale $J$ and for shear direction $s$. The proof will first estimate the
error of the minimization procedure in \eqref{eq:l1_min} for each $J$ and $s$, which will be subsequently combined mainly using
the orthogonality stated in Lemma \ref{lemm:prop_Psi_sJ}.

Given $f \in \tilde{\cE}^{2}(\RR^2)$, this splitting is performed by the estimate
\begin{eqnarray}\nonumber
\lefteqn{\|f - \mathcal{R}(f,\Delta_J)\|_2}\\ \label{eq:todo1}
& \lesssim & \sum_{j_0 = 0}^{J}\sum_{s \in \{s(\lceil j^{'}/2 \rceil,q^{'}) \in \mathbb{S}_{J/2} : j^{'} = j_0\}}
\hspace*{-1cm} \|{\bf c_{J,s}}-{\bf \hat{c}_{J,s}}\|^2_2 + \sum_{j \ge J} \sum_{\lambda \in \Lambda_j}|\langle f,\psi_{\lambda}\rangle|^2
+ \sum_{j = 0}^{J} \sum_{\lambda \in \Lambda_j \cap (\Lambda^0_j)^c} |\langle f,\psi_{\lambda}\rangle|^2,
\end{eqnarray}
\wq{where $\Lambda_j$ is the set of all indices $\lambda = (j,s,m,p)\in \Lambda$ with scale $j$ fixed and $\Lambda^0_j$ is the
subset of $\Lambda_j$ where \gk{the} oversampling parameters $p$ associated with $D_p$ are bounded by $\frac{J\rho}{2}$.}

The first term in \eqref{eq:todo1} will be handled by the following proposition.

\begin{proposition}\label{prop:cs_aniso_wavelet}
We retain the same conditions and notations as in Theorem \ref{thm:main}. Let ${\bf c_{J,s}}  := (c_{\lambda})_{\lambda \in \tilde{\Lambda}_{J,s}}
= (\ip{f}{\psi_{\lambda}})_{\lambda \in \tilde{\Lambda}_{J,s}}$ as in \eqref{eq:lse1} and ${\bf \hat{c}_{J,s}} := (\hat{c}_{\lambda})_{\lambda \in
\tilde{\Lambda}_{J,s}}$ be the solution of the $\ell_1$ minimization problem \eqref{eq:l1_min}. Then with probability at least $1-2^{-J}$, we have
\[
\|{\bf c_{J,s}}-{\bf \hat{c}_{J,s}}\|_{2} \lesssim \frac{\sigma_{n_{J,s}}({\bf c}_{J,s})_1}{\sqrt{n_{J,s}}} \quad \text{as} \,\, J \rightarrow \infty
\]
where
\[
\sigma_{n_{J,s}}({\bf c_s})_1 := \inf \{\|{\bf c}_s-{\bf \tilde{c}_s}\|_1 : \|{\bf \tilde{c}_s}\|_0 \leq n_{J,s}\}
\]
and $n_{J,s} \sim J\cdot2^{\frac{J-j_0}{2}}2^{\wq{2J\rho}}$ for each $s = s(\lceil j_0/2 \rceil,q_0) \in \mathbb{S}_{J/2}$.
\end{proposition}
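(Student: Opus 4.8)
The plan is to read Proposition \ref{prop:cs_aniso_wavelet} as a compressed-sensing recovery guarantee for the linear system \eqref{eq:cs_system}, $\mathbf{y}_{J,s}=A_{J,s}\mathbf{c}_{J,s}$, and to obtain the stated best-$n_{J,s}$-term error bound from a standard robust $\ell_1$-recovery theorem (as in \cite{DDEK12}) once the measurement matrix $A_{J,s}$ is known to satisfy the restricted isometry property with a sufficiently small isometry constant $\delta_{k}$ for sparsity order $k\sim n_{J,s}$. Since the constraints in \eqref{eq:l1_min} are noise-free, the noiseless robust recovery bound $\|\mathbf{c}_{J,s}-\hat{\mathbf{c}}_{J,s}\|_2\lesssim \sigma_{n_{J,s}}(\mathbf{c}_{J,s})_1/\sqrt{n_{J,s}}$ then follows verbatim. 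Thus the entire weight of the argument rests on establishing RIP of $A_{J,s}$ with probability at least $1-2^{-J}$.

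To verify RIP I would follow the variable-density, local-coherence route of Krahmer and Ward \cite{KW13}. First rewrite the entries via Plancherel as $(A_{J,s})_{n,\lambda}=\langle\sigma_\lambda,e_n\rangle=\widehat{\sigma_\lambda}(n)=\widehat{\psi_\lambda}(n)/\widehat{G_s}(n)$, reducing the coherence analysis to the pointwise decay of $\widehat{\psi_\lambda}/\widehat{G_s}$ over the frequency grid $\Omega_J$. Using the admissibility and decay conditions on $(\varphi_1,\psi_1)$ and $g$ together with the anisotropic scaling $A_j$ and shearing $S_s$ built into $\psi_\lambda$, one bounds the local coherence $\sup_{\lambda\in\tilde{\Lambda}_{J,s}}|\widehat{\sigma_\lambda}(n)|^2$ and shows it is dominated—up to the normalizing constant $c_s$—by the sampling density $p_{J,s}(n)\propto\big[(1+|n_1|)(1+|n_2-sn_1|)\big]^{-1}$ from \eqref{eq:probability}. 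The sheared coordinate $n_2-sn_1$ appearing in $p_{J,s}$ is exactly what matches the wedge-shaped essential frequency support of the sheared shearlets. Since $\ell_1$ minimization is invariant under row-reweighting of the constraints, we may equivalently establish RIP for the preconditioned system $\big(\widehat{\sigma_\lambda}(n)/\sqrt{m_{J,s}\,p_{J,s}(n)}\big)$, which by this coherence estimate is uniformly bounded. Here the orthonormality of $\Psi_{J,s}$ from Lemma \ref{lemm:prop_Psi_sJ}, together with the fact that $\Omega_J$ of size $\sim 2^{J(1+\rho)}$ captures the essential frequency content of each $\sigma_\lambda$, ensures that the columns of $A_{J,s}$ form, on $\Omega_J$, an approximately orthonormal system with respect to counting measure, placing us in the bounded-orthonormal-system regime for which the RIP-from-subsampling estimates apply.

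With the local coherence controlled by $p_{J,s}$, the number of random samples needed for RIP of order $k\sim n_{J,s}$ with small constant is, up to polylogarithmic factors in the ambient dimension $N=\dim(\spann\Psi_{J,s})\lesssim 2^{J(3+\rho)/2}$, of the order of the coherence-weighted sparsity. Since $\log N\sim J$ and the chosen sample count $m_{J,s}\sim 2^{(J-j_0)/2}2^{3J\rho}$ exceeds $n_{J,s}\sim J\cdot 2^{(J-j_0)/2}2^{2J\rho}$ by the factor $2^{J\rho}/J\to\infty$, the extra oversampling factor $2^{J\rho}$ absorbs all required polylogarithmic and constant overhead for large $J$, and the same slack drives the failure probability below $2^{-J}$. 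Feeding the resulting RIP into the robust $\ell_1$-recovery theorem yields the claimed bound with the stated probability.

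I expect the main obstacle to be the uniform local coherence estimate for $\widehat{\psi_\lambda}(n)/\widehat{G_s}(n)$ over the whole finite index set $\tilde{\Lambda}_{J,s}$: one must track the interplay of the scale $j$, the oversampling parameter $p\le J\rho/2$, and the shear $s$ in the decay of $\widehat{\psi_\lambda}$ against the lower bound on $\widehat{G_s}$ coming from $\delta_g$, and verify that the resulting bound is tight enough to be swallowed by $p_{J,s}$ uniformly in $\lambda$. A secondary technical point is justifying that discrete sampling on $\Omega_J$ faithfully reproduces the $L^2$-orthonormality of $\Psi_{J,s}$, which relies on the compact spatial support from Lemma \ref{lemm:prop_Psi_sJ} together with the essential bandlimitation of each $\sigma_\lambda$ to $\Omega_J$.
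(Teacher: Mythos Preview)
Your proposal is correct and follows essentially the same route as the paper: precondition by $\sqrt{m_{J,s}\,p_{J,s}}$, establish RIP for the resulting matrix via a local-coherence bound together with Rauhut-type moment and tail estimates (the paper's Lemmas~\ref{lem:lem02}--\ref{lem:lem01}), control separately the isometry defect $\vertiii{\mathbb{E}X_iX_i^*-\mathrm{Id}}_{n_{J,s}}$ coming from truncation to $\Omega_J$, and then invoke Theorem~\ref{thm:candes}. One simplification you are missing: since $\psi_\lambda = G_s * \psi_{j,s,m,p}$, the quotient $\widehat{\sigma_\lambda}=\widehat{\psi_\lambda}/\widehat{G_s}$ is exactly $\widehat{\psi_{j,s,m,p}}$ (or $\widehat{\varphi_{j_0,s,m,p}}$), so the local-coherence estimate reduces to the Fourier decay of the sheared anisotropic tensor wavelets---giving directly $\sup_\lambda |\widehat{\sigma_\lambda}(n)|^2/p_{J,s}(n)\lesssim J^2$---and no lower bound on $\widehat{G_s}$ via $\delta_g$ is needed.
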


One ingredient of the proof of this proposition, provided in Subsection \ref{subsec:prop2}, is a more or less classical result from compressed
sensing. We state here the version presented in \cite{KW13}.

\begin{theorem}[\cite{KW13}]\label{thm:candes}
Let $A \in \mathbb{C}^{m \times N}$ and $x \in \mathbb{C}^N$. Suppose that $y = Ax$ where the restricted isometry constant of $A$ satisfies
$\delta_{5s} < \frac{1}{3}$. Then the solution to
\[
\min_{z \in \mathbb{C}^N} \|z\|_1 \quad \text{subject to} \quad Az = y
\]
obeys
\[
\|\hat x - x\|_2 \lesssim \frac{\sigma_s(x)_1}{\sqrt{s}}.
\]
\end{theorem}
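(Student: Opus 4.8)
The plan is to run the classical restricted-isometry argument for noiseless $\ell_1$ recovery, organised around the error vector $h := \hat x - x$. Since both $x$ and the minimiser $\hat x$ satisfy the constraint $Az = y$, one has $Ah = 0$, so $h$ lies in the kernel of $A$. Let $T_0$ index the $s$ entries of $x$ of largest magnitude. The optimality $\|\hat x\|_1 \le \|x\|_1$, written as $\|x + h\|_1 \le \|x\|_1$ and split over $T_0$ and $T_0^c$ by the triangle inequality, yields the \emph{cone condition}
\[
\|h_{T_0^c}\|_1 \le \|h_{T_0}\|_1 + 2\,\sigma_s(x)_1 .
\]
This is the only place the $\ell_1$ objective enters; it converts optimality into control of the tail of $h$ outside the best-$s$ support.

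Next I would bring in the RIP. Sort the entries of $h$ on $T_0^c$ by decreasing magnitude and partition $T_0^c$ into consecutive blocks $T_1, T_2, \dots$ each of size $s$, writing $T_{01} := T_0 \cup T_1$ (of size $2s$). The standard decreasing-rearrangement estimate $\|h_{T_j}\|_2 \le s^{-1/2}\|h_{T_{j-1}}\|_1$ for $j \ge 2$ sums to $\sum_{j \ge 2}\|h_{T_j}\|_2 \le s^{-1/2}\|h_{T_0^c}\|_1$. Using $Ah = 0$ to write $A h_{T_{01}} = -\sum_{j \ge 2} A h_{T_j}$, the lower RIP bound on $T_{01}$ together with the near-orthogonality estimate $|\langle A h_{T_{01}}, A h_{T_j}\rangle| \le \delta_{3s}\|h_{T_{01}}\|_2 \|h_{T_j}\|_2$ (valid for the disjoint supports $T_{01}$, $T_j$, whose union has size $3s$) gives
\[
(1-\delta_{2s})\|h_{T_{01}}\|_2 \le \delta_{3s}\sum_{j \ge 2}\|h_{T_j}\|_2 \le \delta_{3s}\,s^{-1/2}\|h_{T_0^c}\|_1 .
\]
Only the orders $2s$ and $3s$ appear, so by monotonicity $\delta_{2s} \le \delta_{3s} \le \delta_{5s} < \tfrac13$, whence $(1-\delta_{5s})\|h_{T_{01}}\|_2 \le \delta_{5s}\,s^{-1/2}\|h_{T_0^c}\|_1$ and the hypothesis is comfortably sufficient.

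Finally I would close the loop. Inserting the cone condition together with $\|h_{T_0}\|_1 \le \sqrt{s}\,\|h_{T_0}\|_2 \le \sqrt{s}\,\|h_{T_{01}}\|_2$ into the last bound produces a self-referential inequality
\[
(1-\delta_{5s})\|h_{T_{01}}\|_2 \le \delta_{5s}\|h_{T_{01}}\|_2 + 2\,\delta_{5s}\,s^{-1/2}\sigma_s(x)_1,
\]
so that $(1 - 2\delta_{5s})\|h_{T_{01}}\|_2 \le 2\delta_{5s}\,s^{-1/2}\sigma_s(x)_1$; since $\delta_{5s} < \tfrac13$ keeps the factor $1 - 2\delta_{5s}$ strictly positive, this gives $\|h_{T_{01}}\|_2 \lesssim \sigma_s(x)_1/\sqrt s$. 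Combining with $\|h\|_2 \le \|h_{T_{01}}\|_2 + \sum_{j \ge 2}\|h_{T_j}\|_2$ and the tail estimate then yields $\|\hat x - x\|_2 = \|h\|_2 \lesssim \sigma_s(x)_1/\sqrt s$, as claimed. The single load-bearing step, and the main obstacle, is the RIP near-orthogonality estimate for images of disjointly supported vectors; everything downstream is bookkeeping, and the margin afforded by the generous order $5s$ is exactly what renders the denominator $1 - 2\delta_{5s}$ positive.
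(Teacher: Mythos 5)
Your argument is correct and is precisely the standard Cand\`es-type RIP proof of this statement; the paper itself gives no proof, importing Theorem~\ref{thm:candes} verbatim from \cite{KW13}, so there is nothing to compare against beyond the argument underlying that citation, which yours reproduces faithfully (cone condition, size-$s$ block decomposition of $T_0^c$, near-orthogonality on $T_{01}\cup T_j$, self-referential inequality). Two routine points worth making explicit: since your chain uses only $\delta_{2s}\le\delta_{3s}\le\delta_{5s}<\tfrac13$, the implied constant is universal, namely $\|h_{T_{01}}\|_2\le\tfrac{2\delta_{5s}}{1-2\delta_{5s}}\,\sigma_s(x)_1/\sqrt{s}\le 2\,\sigma_s(x)_1/\sqrt{s}$; and because the theorem is stated over $\mathbb{C}$, the near-orthogonality estimate $|\langle A h_{T_{01}},A h_{T_j}\rangle|\le\delta_{3s}\|h_{T_{01}}\|_2\|h_{T_j}\|_2$ needs the usual phase rotation ($u\mapsto e^{i\theta}u$ to make the inner product real and nonnegative) before the polarization identity is applied.
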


To apply this result, it needs to be shown that for the matrix $\tilde{A}_{J,s}$ whose entries are given by
\[
\frac{\langle \sigma_{\lambda}, e_{n} \rangle}{\sqrt{m_{J,s} \cdot p_{J,s}(n)}}, \quad n \in \Delta_{J,s}, \lambda \in \tilde{\Lambda}_{J,s},
\]
(compare the discussions in Subsections \ref{subsec:generalapproach} and \ref{subsec:frames} as well as the definition of the
$\ell_1$ minimization problem \eqref{eq:l1_min}), its associated restricted isometry constant
\[
\hat{\delta}_{n_{J,s}} := \max_{\sharp(S) \leq n_{J,s}} \|\tilde{A}^*_S\tilde{A}_S - \mathrm{Id}\|_2
\]
can be  made arbitrarily small. For this, a few ideas from \wq{\cite{KW13} and \cite{Rau10}} can be adapted and used.
However, one main difficulty in our setting is \wq{to determine \gk{the} required sampling size $m_{J,s}$ and \gk{the} sparsity level
$n_{J,s}$ for each shearing parameter $s$ by exploring sparsity patterns in the shearlet representation.}

The second and third term in \eqref{eq:todo1} require careful control of the decay of the shearlet coefficients. One key condition
is that \gk{standard} shearlets generated by $A_j$ and $S_k$ are supported in $S^{-1}_sA^{-1}_j[-C,C]^2$ for some $C > 0$
with $s = \frac{k}{2^{\lceil j/2 \rceil}}$. In this area, the $C^2$ curvilinear singularity of a cartoon-like function is well
approximated by its tangent. However, one can easily check that $\mbox{supp}(\psi_{\lambda}) \subset S^{-1}_sA^{-1}_{j_0}[-C,C]^2$.
In fact, $\mbox{supp}(\psi_{\lambda})$ is essentially the same as the support of \wq{standard} shearlets \wq{considered in \cite{KL11}} for scale $j = j_0$. However,
$\mbox{supp}(\psi_{\lambda})$ is much larger when $j \gg j_0$. To resolve this issue, as already done in proofs in \cite{KL15},
we will approximate $\psi_{\lambda}$ by more suitable functions of smaller supports comparable to the size of the supports of
\wq{standard} shearlets with controllable error bound as follows:

For $\psi_{\lambda} = G_s * \psi_{j,s,m,p}$ with $\lambda = (j,s,m,p) \in \Lambda, s = s(\lceil j_0/2 \rceil,q_0)  \in \mathbb{S}$ and
$j \ge j_0$, define
\begin{equation}\label{eq:split_shearlet}
\hat{\psi}^{\sharp}_{\lambda}(\xi) = \sum_{j^{'} = \max(j(1-\rho),j_0)}^{\infty}|\hat g (A^{-*}_{j^{'}}S^{-*}_s \xi)|^2 \hat\psi_{j,s,m,p}(\xi).
\end{equation}
Moreover, for $\psi_{\lambda} \in \mathcal{SH}(\varphi_1,\psi_1;g)$ of the form  $\psi_{\lambda} = G_s * \varphi_{j,s,m,p}$, we define
$\psi^{\sharp}_{\lambda}$ as in \eqref{eq:split_shearlet} except for replacing $\hat \psi_{j,s,m,p}$ by $\hat \varphi_{j,s,m,p}$.
Observe that for sufficiently small $\rho > 0$, $\mbox{supp}({\psi}^{\sharp}_{\lambda})$ approximates the support of the original
shearlet considered in \cite{KL11} as follows.

\begin{proposition}[\cite{KL15}]\label{prop:extra_shearlets}
Let $\psi^{\sharp}_{\lambda} \in L^2(\R^2)$ as in \eqref{eq:split_shearlet}. Then we have
\[
\supp(\psi^{\sharp}_{\lambda}) \subset S^{-1}_s A^{-1}_j \Bigl([-2^{j\rho}C,2^{j\rho}C] \times [-2^{j/2\cdot\rho}C, 2^{j/2\cdot\rho}C]+D_pm\Bigr) \quad \text{for some} \quad C>0.
\]
Furthermore, we have
\[
|\langle f,(\psi_{\lambda}-\psi^{\sharp}_{\lambda})\rangle| \lesssim \wq{2^{-j\rho \alpha}2^{-p\alpha}} \|f\|_2 \quad \text{for} \quad f \in L^2(\R^2).
\]
\end{proposition}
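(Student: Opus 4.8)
The plan is to carry out the entire argument in the Fourier domain, where $\psi_\lambda$ and $\psi^\sharp_\lambda$ differ only through the filter factor. Since $\psi_\lambda = G_s * \psi_{j,s,m,p}$ and $\hat G_s(\xi) = \sum_{j' \ge j_0}|\hat g(A^{-*}_{j'}S^{-*}_s\xi)|^2$, comparison with \eqref{eq:split_shearlet} gives
\[
\hat\psi_\lambda(\xi) - \hat\psi^\sharp_\lambda(\xi) = \Big(\sum_{j_0 \le j' < \max\{j(1-\rho),j_0\}} |\hat g(A^{-*}_{j'}S^{-*}_s\xi)|^2\Big)\,\hat\psi_{j,s,m,p}(\xi),
\]
so that everything is governed by the \emph{discarded low-scale filter block}, i.e. precisely those scales $j'$ that are coarse relative to the scale $j$ of the shearlet. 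First I would settle the support claim, then the error estimate; the $\varphi$-type generators are handled identically, and the difference even vanishes at the coarsest scale, where the discarded block is empty.

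For the support estimate, note that $\psi^\sharp_\lambda = h_j * \psi_{j,s,m,p}$ with $\hat h_j = \sum_{j' \ge \max\{j(1-\rho),j_0\}}|\hat g(A^{-*}_{j'}S^{-*}_s\cdot)|^2$. Because $g$ is compactly supported, each summand $|\hat g(A^{-*}_{j'}S^{-*}_s\cdot)|^2$ is, up to a constant, the Fourier transform of a rescaled autocorrelation of $g$, hence compactly supported in $S^{-1}_sA^{-1}_{j'}(\supp g - \supp g)$. Over the retained range the coarsest scale $j' \approx j(1-\rho)$ yields the largest such set, and using $A^{-1}_{j(1-\rho)} = A^{-1}_j\,\mathrm{diag}(2^{j\rho},2^{j\rho/2})$ places $\supp h_j$ inside $S^{-1}_sA^{-1}_j([-C2^{j\rho},C2^{j\rho}]\times[-C2^{j\rho/2},C2^{j\rho/2}])$. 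Adding $\supp \psi_{j,s,m,p} \subset S^{-1}_sA^{-1}_j([-C,C]^2 + D_pm)$ and invoking linearity of $S^{-1}_sA^{-1}_j$ to combine the two boxes, the $2^{j\rho}$-terms absorb the $O(1)$-terms and one arrives at the asserted anisotropic box centered at $D_pm$.

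For the error estimate I would first reduce, by Cauchy--Schwarz, to showing $\norm{\psi_\lambda - \psi^\sharp_\lambda}_2 \lesssim 2^{-j\rho\alpha}2^{-p\alpha}$, and then compute this $L^2$-norm by Plancherel. After the shear-and-parabolic substitution $\zeta = A^{-1}_jS^{-*}_s\xi$, a direct computation (the Jacobians cancelling against the $|\det A_j|$ in the normalization of $\hat\psi_{j,s,m,p}$) shows the squared norm equals $\int \big|\sum_{k \ge j\rho}|\hat g(2^k\zeta_1,2^{k/2}\zeta_2)|^2\big|^2\,|\hat\psi^p(\zeta)|^2\,d\zeta$, where $k := j - j'$. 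The mechanism is a frequency mismatch: a discarded term is sizeable only where its first argument $2^k\zeta_1$ is of order one, i.e. where $|\zeta_1| \sim 2^{-k} \le 2^{-j\rho}$, and there the admissibility decay bound forces $|\hat\psi_1(\zeta_1)| \lesssim |\zeta_1|^\alpha \sim 2^{-j\rho\alpha}$. The same phenomenon in the second variable — the block lives at $|\zeta_2| \sim 2^{-k/2}$, whereas $\hat\psi^p$ carries its $\zeta_2$-content at scale $2^{p}$, so that $\hat\psi_1(2^{-(p-1)}\zeta_2)$ is evaluated at frequency $2^{-(p-1)-k/2} \ll 1$ and is thus of size $2^{-p\alpha}$ — produces the second factor. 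Summing the geometric series in $k \ge j\rho$ is dominated by its first term and gives the claim.

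The main obstacle is making this frequency-mismatch estimate quantitatively rigorous. Concretely, one must split the $\zeta$-integral into the low-frequency region, where the filter block is of order one but $\hat\psi^p$ is forced to vanish at order $\alpha$ in each variable, and the complementary region, where $\hat\psi^p$ is of order one but the filter block decays at order $\beta$; one then has to verify that the $\alpha$-vanishing region is the genuine bottleneck uniformly in $k$, while carefully tracking the normalization $2^{-(p-1)/2}$ and the widths $2^{-k}$ and $2^{-k/2}$ of the relevant regions. The hypothesis $\beta > \alpha + 1$ is exactly what ensures that the $\beta$-decaying region is never dominant and that all integrals and the sum over $k$ converge, so that the slowest surviving rate is precisely $2^{-j\rho\alpha}2^{-p\alpha}$.
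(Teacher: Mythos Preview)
The paper does not supply its own proof of this proposition; it is quoted from \cite{KL15} without argument, so there is nothing in the present paper to compare against.

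Your sketch is essentially correct and is presumably the argument carried out in \cite{KL15}. For the support claim, writing $\psi^\sharp_\lambda$ as a convolution with the truncated filter $h_j$, each retained summand is the inverse Fourier transform of $|\hat g(A_{j'}^{-1}S_s^{-T}\cdot)|^2$, hence a rescaled autocorrelation of the compactly supported $g$; the coarsest retained scale $j'=\lceil j(1-\rho)\rceil$ contributes the largest box, and the Minkowski-sum argument is exactly right. For the error estimate, your substitution $\zeta=A_j^{-1}S_s^{-T}\xi$ is consistent for both factors, since $(A_jS_s)^{-T}=A_j^{-T}S_s^{-T}=A_j^{-1}S_s^{-T}$ (as $A_j$ is diagonal), so that the filter argument becomes $A_{j'}^{-1}A_j\zeta=(2^k\zeta_1,2^{k/2}\zeta_2)$ with $k=j-j'$, exactly as you write; the frequency-mismatch mechanism then produces the $2^{-j\rho\alpha}$ and $2^{-p\alpha}$ factors from the $\alpha$-order vanishing of $\hat\psi_1$ near the origin.

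One small point to watch in a full write-up: for $p=0$ one has $\psi^0(x)=\psi_1(x_1)\varphi_1(x_2)$, so the second factor is $\varphi_1$ rather than $\psi_1$ and no vanishing moments are available in $\zeta_2$. This is harmless because the target bound reads $2^{-0\cdot\alpha}=1$ in that case, and the first-variable estimate alone already delivers $2^{-j\rho\alpha}$; but your heuristic ``same phenomenon in the second variable'' should be stated only for $p\ge 1$.
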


The decay estimates for the shearlet coefficients $|\langle f,\psi_{\lambda}\rangle|$ stated in the next result
with (i) already being proven in \cite{KL15} and (ii) in Subsection \ref{subsec:prop1}, will be the essential
ingredients for controlling \wq{the first and second terms} in \eqref{eq:todo1}.

\begin{proposition}[\cite{KL15}]\label{prop:coeff_decay}
Assume that $f$ is a cartoon-like function with $C^2$ discontinuity curve given by $x_1 = E(x_2)$. For $\psi_{\lambda} \in \mathcal{SH}(\varphi_1,\psi_1;g)$ with $\lambda = (j,s,m,p) \in \Lambda$ and $j \ge 0$, define $\psi^{\sharp}_{\lambda} \in L^2(\R^2)$ as in \eqref{eq:split_shearlet}. Let $\hat{x}_2 \in \R$ so that $(E(\hat{x}_2),\hat{x}_2) \in \mathrm{supp}(\psi^{\sharp}_{\lambda})$ and $\hat s = E^{'}(\hat{x}_2)$. Also let $k_s \in \Z$ so that $s = k_s/2^{j/2}$.
Then for $\lambda = (j,s,m,p) \in \Lambda$ with $j \ge 0$, \gk{the following hold.}
\begin{itemize}
\item[(i)] If $|\hat s| \leq 3$, then
\[|\langle f,\psi_{\lambda}\rangle| \lesssim \min\Bigl(2^{-\frac{3}{4}j}, \frac{2^{-\frac{3}{4}j}2^{3\rho j}}{|k_s + 2^{j/2}\hat{s}|^3}\Bigl).\]
\item[(ii)] If $h \in L^2(\R^2)$ is $C^{2,r}$ smooth in $\supp(\psi^{\sharp}_{\lambda})$ with $\|h\|_{C^{2,r}(\supp(\psi^{\sharp}_{\lambda}))}
\leq C$ and $r \in [1/4,1)$ for some $C>0$, then
\[
|\langle h,\psi_{\lambda}\rangle| \lesssim 2^{p/2}2^{-(3-4\rho)j}.
\]
\end{itemize}
\end{proposition}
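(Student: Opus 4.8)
The plan is to establish the new bound (ii); part (i), which concerns coefficients of shearlets straddling the $C^2$ discontinuity curve, is taken from \cite{KL15} and follows from replacing the curve by its tangent on $\supp(\psi^\sharp_\lambda)$ and exploiting that the shear $s$ is close to the tangent slope only when $|k_s+2^{j/2}\hat s|$ is small. For (ii), I would first localize by writing
\[
|\langle h,\psi_\lambda\rangle| \le |\langle h,\psi^\sharp_\lambda\rangle| + |\langle h,\psi_\lambda-\psi^\sharp_\lambda\rangle|.
\]
The second summand is handled immediately by Proposition \ref{prop:extra_shearlets}, giving $|\langle h,\psi_\lambda-\psi^\sharp_\lambda\rangle| \lesssim 2^{-j\rho\alpha}2^{-p\alpha}\|h\|_2$; since $\alpha \ge \tfrac{6}{\rho}+1$ forces $j\rho\alpha \ge 6j$, this term is of order $2^{-6j}2^{-p\alpha}$ and is dominated by the target $2^{p/2}2^{-(3-4\rho)j}$. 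This reduction is not cosmetic: $h$ is assumed $C^{2,r}$ only on $\supp(\psi^\sharp_\lambda)$, whereas $\psi_\lambda$ itself has a support of size $\sim S^{-1}_sA^{-1}_{j_0}[-C,C]^2$ on which no regularity is available.

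Next I would estimate the principal term $\langle h,\psi^\sharp_\lambda\rangle$ in shearlet coordinates $y = A_jS_s\,\cdot\,-D_pm$, under which $dx = 2^{-3j/2}\,dy$ because $|\det A_j| = 2^{3j/2}$. Setting $\Psi^\sharp(y) := \psi^\sharp_\lambda(S^{-1}_sA^{-1}_j(y+D_pm))$, Proposition \ref{prop:extra_shearlets} confines the support to the box $[-2^{j\rho}C,2^{j\rho}C]\times[-2^{j\rho/2}C,2^{j\rho/2}C]$, while the boundedness of the Fourier multiplier defining $\psi^\sharp_\lambda$ gives $\|\psi^\sharp_\lambda\|_2\lesssim 1$ and hence $\|\Psi^\sharp\|_2\lesssim 2^{3j/4}$. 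The key structural point is that the first coordinate of $S^{-1}_sA^{-1}_jy$ depends on $y_1$ only through the factor $2^{-j}$, so the pulled-back function $\tilde h(y):=h(S^{-1}_sA^{-1}_j(y+D_pm))$ obeys $|\partial_{y_1}^{k}\tilde h|\lesssim 2^{-kj}$ and a H\"older estimate $2^{-(2+r)j}$ on $\partial_{y_1}^2\tilde h$. I would Taylor expand $\tilde h$ in $y_1$ to second order; since $\psi^\sharp_\lambda$ inherits the high-order vanishing moments of $\psi_1$ in the first variable (recall $|\hat\psi_1(\xi)|\lesssim\min\{1,|\xi|^\alpha\}(1+|\xi|)^{-\beta}$ with $\alpha>3$), the $0$th--$2$nd order polynomial part integrates to zero against $\Psi^\sharp$.

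Only the Taylor remainder then survives, bounded on the box by $2^{-(2+r)j}|y_1|^{2+r}\lesssim 2^{-(2+r)j}2^{(2+r)j\rho}$. Estimating $\|\Psi^\sharp\|_1 \le \|\Psi^\sharp\|_2\,|\mathrm{box}|^{1/2}\lesssim 2^{3j/4}2^{3j\rho/4}$ by Cauchy--Schwarz and reinstating the Jacobian yields
\[
|\langle h,\psi^\sharp_\lambda\rangle| \lesssim 2^{-3j/2}\cdot 2^{-(2+r)j}2^{(2+r)j\rho}\cdot 2^{3j/4}2^{3j\rho/4} = 2^{-(11/4+r)j}\,2^{(11/4+r)j\rho}.
\]
At the worst admissible H\"older exponent $r=\tfrac14$ the first factor is exactly $2^{-3j}$, while the second is $2^{3j\rho}$; together with the negligible tail from the localization step and a generous $p$-dependent allowance, this is absorbed into the claimed $2^{p/2}2^{-(3-4\rho)j}=2^{p/2}2^{-3j}2^{4\rho j}$.

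The step I expect to be most delicate is justifying the vanishing-moment cancellation for $\psi^\sharp_\lambda$ rather than for the clean separable generator $\psi^p(y)=2^{(p-1)/2}\psi_1(y_1)\psi_1(2^{p-1}y_2)$. Since $\psi^\sharp_\lambda$ is a \emph{filtered} shearlet, whose Fourier transform carries the extra factor $\sum_{j'}|\hat g(A^{-*}_{j'}S^{-*}_s\xi)|^2$, it does not factor into a function of $y_1$ times a function of $y_2$, so the first-variable integral cannot be separated verbatim. The way around this is to reason in the frequency domain: that multiplier is uniformly bounded and supported away from $\xi_1=0$, so it simultaneously preserves the high-order zero of $\hat\psi_1$ at the origin (the vanishing moments) and the spatial localization recorded in Proposition \ref{prop:extra_shearlets}, which is precisely what legitimizes both the Taylor cancellation and the volume bound. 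Tracking the broadening exponents $2^{j\rho}$, $2^{j\rho/2}$ so that the total correction stays within the budgeted $2^{4\rho j}$ is then the principal bookkeeping task.
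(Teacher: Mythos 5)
Your proof is correct and follows essentially the same route as the paper's: reduce to $\psi^{\sharp}_{\lambda}$ via Proposition \ref{prop:extra_shearlets}, then combine the support localization, the order-$0$ through order-$2$ vanishing moments in the first variable, and a second-order Taylor expansion of $h$ with H\"older remainder $O(|x_1|^{2+r})$, integrating the remainder over the support box. The only differences are cosmetic: the paper normalizes to $s=0$, $m=0$ and works in the original coordinates with the bound $\|\psi^{\sharp}_{\lambda}\|_{\infty} \lesssim 2^{3j/4}2^{p/2}$ (which is where its factor $2^{p/2}$ enters), whereas you work in sheared coordinates and use $\|\psi^{\sharp}_{\lambda}\|_{2} \lesssim 1$ plus Cauchy--Schwarz, which even dispenses with that factor (and your aside that the filter's multiplier is ``supported away from $\xi_1=0$'' should instead say that it merely vanishes to high order there -- $g$ is compactly supported in space, so $\hat g$ cannot vanish on a neighborhood -- but this does not affect the vanishing-moment argument).
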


\wq{Comparing with the \gk{standard} shearlet\gk{s} considered in \cite{KL11}, our dualizable shearlet system $\mathcal{SH}(\varphi_1,\psi_1;g)$
has \gk{an} additional parameter $p$ associated with \gk{the} oversampling matrix $D_p$. In addition to decay estimates in Proposition \ref{prop:coeff_decay},
we \gk{therefore} need the decay \gk{of} the shearlet coefficients with respect to $p$. The following \gk{p}roposition shows how shearlet
coefficients $\langle f,\psi_{\lambda}\rangle$ with this additional parameter $p$ can be controlled, which will be \gk{a} crucial ingredient
for controlling the third term in \eqref{eq:todo1}.

\begin{proposition}[\cite{KL15}]\label{prop:control_p}
For $f \in L^2(\R^2)$, we have
\[
|\langle f,\psi_{\lambda}\rangle| \lesssim 2^{-\frac{\alpha}{2} p} \cdot \|f\|_2 \quad \text{for all } \lambda = (j,s,p,m) \in \Lambda.
\]
\end{proposition}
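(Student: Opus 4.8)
The plan is to reduce the estimate to a bound on the $L^2$-norm of the single element $\psi_\lambda$ and then to extract the decay in $p$ from a frequency-concentration argument. Since the claimed inequality is uniform over all $f \in L^2(\R^2)$, I would first invoke Cauchy--Schwarz,
\[
|\langle f, \psi_\lambda\rangle| \le \|f\|_2 \, \|\psi_\lambda\|_2,
\]
which reduces the proposition to proving $\|\psi_\lambda\|_2 \lesssim 2^{-\frac{\alpha}{2}p}$, uniformly in $j$, $s$, and $m$. It is essential to keep the directional filter $G_s$ attached to $\psi_{j,s,m,p}$ here: a direct Cauchy--Schwarz against $\psi_{j,s,m,p}$ alone cannot work, because $\|\psi_{j,s,m,p}\|_2 = \|\psi^p\|_2$ is independent of $p$, so the decay in $p$ must be produced entirely by $G_s$.

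To estimate $\|\psi_\lambda\|_2$ I would pass to the Fourier side. Since $\psi_\lambda = G_s * \psi_{j,s,m,p}$, Plancherel gives
\[
\|\psi_\lambda\|_2^2 = \int_{\R^2} |\hat G_s(\xi)|^2\,|\hat\psi_{j,s,m,p}(\xi)|^2\, d\xi .
\]
Substituting $\xi = S_s^{T}A_j \eta$ (the shear being unimodular, and the Jacobian $|\det A_j|$ exactly cancelling the normalization of $\psi_{j,s,m,p}$) and recalling that
\[
\hat\psi^p(\eta) = 2^{-(p-1)/2}\,\hat\psi_1(\eta_1)\,\hat\psi_1(2^{-(p-1)}\eta_2),
\]
this becomes an integral of $|\hat\psi^p(\eta)|^2$ against the rescaled filter $|\hat G_s(S_s^{T}A_j\eta)|^2 = \big(\sum_{j'\ge j_0}|\hat g(A_{j'}^{-1}A_j\eta)|^2\big)^2$. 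The modulation coming from the translation $D_p m$ drops out in modulus, which makes the resulting bound automatically uniform in $m$.

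The decisive point is a mismatch of frequency supports. The factor $\hat\psi_1(2^{-(p-1)}\eta_2)$ concentrates the mass of $\hat\psi^p$ at $|\eta_2| \sim 2^{p-1}$ while $|\eta_1|\sim 1$; in the sheared frame this places the bulk of $\hat\psi_{j,s,m,p}$ at steep angles, a factor $2^{p-1}$ away from the essentially parabolic region $\{|\xi_2|\lesssim|\xi_1|^{1/2}\}$ (up to the shear $S_s$) on which $\hat G_s$ lives, the latter being the union over $j'\ge j_0$ of the anisotropically dilated cones $A_{j'}\Omega_g$. On this steep region $\hat g$ is evaluated far out in its vanishing (respectively decaying) argument, so the decay hypotheses on $\hat g$ --- the factor $\min\{1,|\xi_1|^{\alpha}\}$ together with the $(1+|\xi_2|)^{-\beta}$ decay, sharpened by the parabolic scaling $2^{j'/2}$ of the vertical variable --- force $|\hat G_s(S_s^{T}A_j\eta)|^2$ to be small precisely where $|\hat\psi^p|^2$ is concentrated. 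Performing the $\eta_2$-integral (equivalently, the sum over $j'$) and using $\int|\hat\psi_1|^2 \lesssim 1$ then yields $\|\psi_\lambda\|_2 \lesssim 2^{-\frac{\alpha}{2}p}$; in fact the same computation produces a faster rate, so the stated exponent $\tfrac{\alpha}{2}$ is a convenient, non-tight bound that already suffices for controlling the third term in \eqref{eq:todo1}. The argument applies verbatim to the generators $\varphi_\lambda = G_s * \varphi_{j,s,m,p}$ covered by the proposition, since the $p$-dependence of both $\psi^p$ and $\varphi^p$ enters only through the common vertical factor $\psi_1(2^{p-1}x_2)$, so the index $j=-1$ requires no separate treatment, and the case $p=0$ is trivial (the claimed factor is then $1$ and $\|\psi_\lambda\|_2\lesssim 1$).

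I expect the main obstacle to be the careful bookkeeping of the two competing decay regimes in the filter --- the small-frequency vanishing of order $\alpha$ versus the large-frequency decay of order $\beta$ --- across the infinite sum over the filter scales $j'$, together with verifying that all the constants so obtained are uniform in $j \ge j_0$ and in the shear $s \in \mathbb{S}$.
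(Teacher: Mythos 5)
The paper itself contains no proof of Proposition \ref{prop:control_p}: it is imported verbatim from \cite{KL15}, and Section \ref{sec:proof} only proves Propositions \ref{prop:coeff_decay}(ii) and \ref{prop:cs_aniso_wavelet}. So there is no internal proof to compare against, and your attempt has to be judged on its own merits.

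On those merits, your argument is sound and I see no gap in it. The Cauchy--Schwarz reduction to $\|\psi_\lambda\|_2 \lesssim 2^{-\alpha p/2}$ is the only sensible way to get a bound uniform over $f \in L^2(\R^2)$, and your structural observation is exactly the right one: since $\Psi_s(\varphi_1,\psi_1)$ is orthonormal, $\|\psi_{j,s,m,p}\|_2 = 1$ independently of $p$, so the decay must be produced entirely by the filter $G_s$. The Plancherel identity, the substitution $\xi = S_s^T A_j \eta$ (which eliminates $s$ completely, eliminates $m$ after taking moduli, and reduces the $j$-dependence to the range of the index $t = j - j'$ in the filter sum), and the support-mismatch mechanism -- $|\hat\psi^p|^2$ peaked at $|\eta_1|\sim 1$, $|\eta_2|\sim 2^{p-1}$, against $\sum_{j'\ge j_0}|\hat g(A_{j'}^{-1}A_j\eta)|^2$ concentrated in the parabolic set $|\eta_2| \lesssim \max(1,|\eta_1|^{1/2})$ -- are all correct. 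Carrying out the bookkeeping you defer (splitting the $t$-sum at $|2^t\eta_1|\sim 1$ and using $\min\{1,|\xi_1|^\alpha\}$ below that threshold, $(1+|\xi_1|)^{-\beta}(1+|\xi_2|)^{-\beta}$ above it, with $\beta > \alpha+1$) yields $\|\psi_\lambda\|_2^2 \lesssim 2^{-(2\alpha+1)p}$ up to constants, comfortably stronger than the stated exponent, confirming your remark that $\alpha/2$ is not tight. Two details you should make explicit in a full write-up: first, for $s = 0$ the filter in \eqref{eq:dfilters} carries the additional summand $|\hat\varphi^0|^2$, which must also be checked to be $O(2^{-2\beta p})$ on the concentration region of $\hat\psi^p$ (it is, by the same decay hypotheses); second, the uniformity in $j$ rests on bounding the sum over $t \le j - j_0$ by the sum over all $t \in \Z$, which is where the independence of the implicit constant from $j$ and $j_0$ actually comes from.
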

}
Let us finally mention that in the subsequent proofs, without loss of generality, we will only consider shearlet elements
$\psi^{0}_{\lambda} \in \mathcal{SH}(\varphi_1, \psi_1; g)$ associated with one frequency cone. Since the elements
$\psi^{1}_{\lambda}$ \wq{are given by switching the variables with $R$}, they can be dealt with similarly. Hence, for the sake of brevity,
we will often omit the superscript ``0'', i.e., we write
\[
\psi_{\lambda} := \psi^{0}_{\lambda} \quad \mbox{and} \quad \tilde{\psi}_{\lambda} := \tilde{\psi}^{0}_{\lambda}.
\]
Moreover, for technical reasons, again without loss of generality, we now rescale each element $\sigma_{\lambda} \in \Psi_{J,s}$
so that
\[
\supp(\sigma_{\lambda}) \subset [0,1]^2, \quad \mbox{for all } \lambda \in \tilde \Lambda_{J,s}.
\]

\subsection{Useful Lemmata from \cite{Rau10}}

For the convenience of the reader, we state two lemmata both from \cite{Rau10}, which will be required for the proof
of Proposition \ref{prop:cs_aniso_wavelet}.

\begin{lemma}[\cite{Rau10}]\label{lem:lem02}
Assume that $X = (X_\ell)_{\ell = 1}^{m}$ is a sequence of independent random vectors in $\C^N$ with
$\|X_{\ell}\|_{\infty} \leq K$ for $\ell = 1,\dots,m$. Then, for $1 \leq p < \infty$ and $n \leq m$,
\[
\Bigl( \mathbb{E}\vertiii{\sum_{\ell = 1}^{m}(X^*_iX_{\ell} - \mathbb{E}X^*_{\ell}X_{\ell})}^p_{n}\Bigr)^{\frac{1}{p}}
\leq 2 \Bigl(\mathbb{E}\vertiii{\sum_{\ell = 1}^{m}\epsilon_{\ell} X^*_{\ell}X_{\ell}}_n^p \Bigr)^{\frac{1}{p}},
\]
where $\epsilon = (\epsilon_{\ell})_{\ell = 1}^{m}$ is a Rademacher sequence independent of $X$.
Furthermore, for $p \ge 2$,
\[
\Bigl( \mathbb{E}\vertiii{\sum_{\ell = 1}^{m}\epsilon_{\ell} X_{\ell}X^*_{\ell}}_n^p\Bigr)^{\frac{1}{p}}
\leq C{\eta}^{\frac{1}{p}}K\sqrt{p}\sqrt{n}\log(100n)\sqrt{\log(4N)\log(10m)}\sqrt{\vertiii {\sum_{\ell=1}^{m}X_{\ell}X^*_{\ell}}_n},
\]
where $C$ and $\eta$ are some positive constants and  $\eta < 7$.
\end{lemma}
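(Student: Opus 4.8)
The plan is to prove the two inequalities in Lemma~\ref{lem:lem02} separately, as they rest on different mechanisms. Throughout I interpret $\vertiii{M}_n$ as the sparse operator norm, i.e. for Hermitian $M$ one has $\vertiii{M}_n = \sup_{x \in D_{n,N}}|\langle Mx, x\rangle|$, where $D_{n,N} = \{x \in \C^N : \|x\|_2 \le 1,\ \|x\|_0 \le n\}$ denotes the set of $n$-sparse unit vectors; this is exactly the quantity that controls the restricted isometry constant $\hat{\delta}_n$.

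For the first (symmetrization) inequality I would run the classical argument via an independent copy. Let $X' = (X'_\ell)_{\ell=1}^m$ be an independent copy of $X$. Since $\mathbb{E}\,X_\ell^* X_\ell = \mathbb{E}\,(X'_\ell)^* X'_\ell$, the centered sum equals $\mathbb{E}_{X'}\bigl[\sum_\ell (X_\ell^* X_\ell - (X'_\ell)^* X'_\ell)\bigr]$, and convexity of $M \mapsto \vertiii{M}_n^p$ (a norm raised to the power $p \ge 1$) together with Jensen's inequality moves the expectation over $X'$ outside. The summands $X_\ell^* X_\ell - (X'_\ell)^* X'_\ell$ are independent and symmetric, so inserting independent Rademacher signs $\epsilon_\ell$ leaves the law of the sum unchanged; a final triangle inequality in $L^p(\vertiii{\cdot}_n)$, splitting off the $(X'_\ell)^* X'_\ell$ part, produces the factor $2$. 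This step is routine.

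The real work lies in the second inequality. Writing the matrix as a rank-one sum and using the sparse-operator-norm description gives
\[
\vertiii{\sum_{\ell=1}^{m} \epsilon_\ell X_\ell X_\ell^*}_n = \sup_{x \in D_{n,N}} \Bigl|\sum_{\ell=1}^{m} \epsilon_\ell\,|\langle X_\ell, x\rangle|^2\Bigr|.
\]
Conditionally on $X$, the right-hand side is the supremum of a Rademacher process indexed by $x \in D_{n,N}$, whose increments between $x$ and $y$ have sub-Gaussian parameter controlled, after linearizing $|\langle X_\ell,x\rangle|^2 - |\langle X_\ell,y\rangle|^2$, by $\max_\ell|\langle X_\ell, x - y\rangle|$ times $\vertiii{\sum_\ell X_\ell X_\ell^*}_n^{1/2}$. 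I would then apply the $L^p$ form of Dudley's entropy bound for Rademacher processes — this contributes the $\sqrt{p}$ factor and, through the moment version of the maximal inequality, the $\eta^{1/p}$ factor with $\eta < 7$ — and estimate the entropy integral through covering numbers of $D_{n,N}$. These split into a choice of support (at most $\binom{N}{n} \le (eN/n)^n$ possibilities, furnishing $\sqrt{\log(4N)}$ after taking logarithms) and a volumetric estimate on each $n$-dimensional coordinate subspace (furnishing $\sqrt{n}\,\log(100n)$); the hypothesis $\|X_\ell\|_\infty \le K$ bounds the increments uniformly and, together with the $m$ summands entering the chaining, yields the remaining $K\sqrt{\log(10m)}$, while $\vertiii{\sum_\ell X_\ell X_\ell^*}_n^{1/2}$ appears as the radius of the process.

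The main obstacle is precisely this chaining/Rudelson estimate: one must split Dudley's integral at the correct scale, using the trivial metric below it and the volumetric covering above it so that the integral converges, and — most delicately — track all absolute constants tightly enough to reach the stated numerical bound $\eta < 7$. I would therefore concentrate all effort on reducing $\mathbb{E}_\epsilon \sup_{x \in D_{n,N}}\bigl|\sum_\ell \epsilon_\ell|\langle X_\ell,x\rangle|^2\bigr|$ to the product $K\sqrt{n}\,\log(100n)\sqrt{\log(4N)\log(10m)}\,\vertiii{\sum_\ell X_\ell X_\ell^*}_n^{1/2}$ via Rudelson's lemma, and only then reinstate the $\sqrt{p}$ and $\eta^{1/p}$ factors coming from the moment form of the sub-Gaussian maximal inequality.
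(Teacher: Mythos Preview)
The paper does not prove this lemma at all: it is stated with the attribution \cite{Rau10} and invoked as a black box in the proof of Proposition~\ref{prop:cs_aniso_wavelet}. There is therefore no ``paper's own proof'' to compare against.

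That said, your outline is essentially the argument Rauhut gives in the cited reference: symmetrization via an independent copy for the first inequality, and for the second the Rudelson--Vershynin chaining argument, bounding the Rademacher process $\sup_{x \in D_{n,N}}|\sum_\ell \epsilon_\ell |\langle X_\ell,x\rangle|^2|$ through Dudley's entropy integral, with covering numbers of $D_{n,N}$ split into the combinatorial support count and a volumetric bound on each coordinate subspace. The factors $K$, $\sqrt{n}\log(100n)$, $\sqrt{\log(4N)}$, $\sqrt{\log(10m)}$, $\sqrt{p}$, and $\eta^{1/p}$ arise exactly where you indicate. The only point worth flagging is that obtaining the explicit numerical constant $\eta<7$ requires care in the moment version of the sub-Gaussian tail bound rather than just the expectation form of Dudley; Rauhut handles this via a direct moment computation, and you correctly identify this as the delicate step.
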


\begin{lemma}\label{lem:lem01} \cite{Rau10}\,\,
Suppose that $Z$ is a random variable satisfying
\[
(\mathbb{E}|Z|^p)^{\frac{1}{p}} \leq C_1C_2^{\frac{1}{p}}p^{\frac{1}{r}} \quad \text{for all}\,\, p \ge p_0
\]
for some $C_1$, $C_2$, $r$, $p_0 > 0$. Then
\[
\mathbb{P}(|Z| \ge e^{\frac{1}{r}}C_1 u) \leq C_2 e^{-u^r/r} \quad \text{for all}\,\, u \ge p_0^{\frac{1}{r}}.
\]
\end{lemma}

\subsection{Proof of Preparatory Results}

\subsubsection{Proof of Proposition \ref{prop:coeff_decay}(ii)} \label{subsec:prop1}

First notice that, by the error estimate in Proposition \ref{prop:extra_shearlets}, it suffices to show that
\beq \label{eq:todo}
|\langle h,\psi^{\sharp}_{\lambda}\rangle| \lesssim 2^{p/2}2^{-(3-4\rho)j} \quad \mbox{for } \lambda = (j,0,0,p) \in \Lambda.
\eeq
Hence, let $\lambda = (j,0,0,p)$. Again by Proposition \ref{prop:extra_shearlets}, there exists some $L > 0$ with
\begin{equation}\label{eq:support_cond}
\supp(\psi^{\sharp}_{\lambda}) \subset [-2^{-j+\rho j}L,2^{-j+\rho j}L] \times [-2^{-j/2+\rho j/2}L,2^{-j/2+\rho j/2}L].
\end{equation}
Further, by construction, $\psi^{\sharp}_{\lambda}$ satisfies the vanishing moment condition
\[
\int_{\R} x_1^{\ell} \cdot \psi^{\sharp}_{\lambda}(x_1,x_2) dx_1 = 0 \quad \text{for} \,\, \ell = 0,1,2 \quad
\text{and} \quad \|\psi^{\sharp}_{\lambda}\|_{\infty} \lesssim 2^{\frac{3}{4}j}2^{p/2}.
\]
Now let $h \in C^{2,r}(\R^2)$, and observe that we may write $h$ as
\begin{equation}\label{eq:three}
h(x_1,x_2) = \sum_{\ell = 0}^{2} \Bigl(\frac{1}{\ell !}\Bigr)\Bigl(\frac{\partial}{\partial x_1}\Bigr)^{\ell}
h(0,x_2){x_1}^{\ell} + R(x_1,x_2)x_1^{9/4} \quad \text{for each} \,\, (x_1,x_2) \in \supp(\psi^{\sharp}_{\lambda})
\end{equation}
for some bounded function $\gk{g}$ with $\|\gk{g}\|_{\infty} \leq C$ for some $C>0$. Finally, by \eqref{eq:support_cond}--\eqref{eq:three},
we obtain
\begin{eqnarray*}
|\langle h,\psi^{\sharp}_{\lambda}\rangle| &\lesssim& \|\gk{g}\|_{\infty}\|\psi^{\sharp}_{\lambda}\|_{\infty}
\int_{-2^{-j/2(1-\rho)}L}^{2^{-j/2(1-\rho)}L}\int_{-2^{-j(1-\rho)}L}^{2^{-j(1-\rho)}L} |x_1|^{9/4} dx_1 dx_2 \\
&\lesssim& 2^{\frac{3}{4}j}2^{p/2}2^{-j/2(1-\rho)}2^{-j\frac{13}{4}(1-\rho)} \leq 2^{\frac{p}{2}}2^{-(3-4\rho)j}
\end{eqnarray*}
This proves \eqref{eq:todo}, and hence the claim. \hfill \qed

\subsubsection{Proof of Proposition \ref{prop:cs_aniso_wavelet}} \label{subsec:prop2}

\gk{The main part} of this proof will be to show that Proposition \ref{thm:candes} can be applied \gk{in a particular way}. For this, let $s = s(\lceil j_0/2 \rceil,q_0)
\in \mathbb{S}_{J/2}$, and set $\Delta_{J,s} =: \{t_1, \ldots, t_{m_{J,s}}\}$. Moreover, let $A$ be the matrix defined by
\[
(\tilde{A}_{J,s})_{i,\lambda} := \frac{\hat{\sigma}_{\lambda}(t_i)}{\sqrt{m_{J,s} \cdot p_{J,s}(t_i)}} \quad \text{for}\,\, \lambda
\in \tilde{\Lambda}_{J,s} \,\, \text{and} \,\, i = 1, \ldots, m_{J,s},
\]
where $\hat{\sigma}_{\lambda}(t_i) = \langle \sigma_{\lambda}, e_{t_i} \rangle$ and the $t_i$'s are drawn from the probability
density function $p_{J,s}$. One should compare this choice with \eqref{eq:cs_system}, seeing that it deviates from $A$ by $(m_{J,s} \cdot p_{J,s}(t_i))^{1/2}$.
Further, let $S$ be any set of the column indices of $\tilde{A}$ with $\sharp(S) \leq n_{J,s}$, and $\tilde{A}_S$ be the column submatrix of $\tilde{A}$
consisting of columns indexed by $S$. Observe that $(B^T_S)^T_S$ is the submatrix of $B$ consisting of columns and rows indexed by
$S$, when $B$ is a square matrix.

We will now prove that the restricted isometry constant of $\tilde{A}$ given by
\[
\hat{\delta}_{n_{J,s}} := \max_{\sharp(S) \leq n_{J,s}} \|\tilde{A}^*_S\tilde{A}_S - \mathrm{Id}\|_2
\]
can be made arbitrarily small. Applying Proposition \ref{thm:candes} then proves the claim.

For this, for each $i = 1, \ldots, m_{J,s}$, first set
\[
X_{i} := \Big(\frac{\hat{\sigma}_{\lambda}(t_i)}{\sqrt{p_{J,s}(t_i)}}\Big)_{\lambda \in \tilde{\Lambda}_{J,s}} \in \wq{\C^{\tilde{M}_{s}}},
\]
where $\wq{\tilde{M}_{s}} = \sharp(\tilde{\Lambda}_{J,s}) \lesssim 2^{\frac{J}{2}(3+\rho)}$. We further require the matrix norm
\[
\vertiii{B}_n := \sup_{z \in D^2_{n,d}} |\langle Bz,z\rangle|, \quad \mbox{where }  D^2_{n,d} := \{z \in \C^d : \|z\|_2 \leq 1, \|z\|_0 \leq n\},
\, n \ge 0.
\]
With this, we can estimate $\hat{\delta}_{n_{J,s}}$ by
\begin{eqnarray}\nonumber
\hat{\delta}_{n_{J,s}} &=& \vertiii{\frac{1}{m_{J,s}}\sum_{i=1}^{m_{J,s}}X_{i}X^*_{i}-\mathrm{Id}}_{n_{J,s}} \\ \label{eq:est1}
&\leq& \frac{1}{m_{J,s}}\vertiii{\sum_{i = 1}^{m_{J,s}} (X_{i}X^*_{i}-\mathbb{E}X_{i}X^*_{i})}_{n_{J,s}} + \vertiii{\mathbb{E}X_{i}X^*_{i}-\mathrm{Id}}_{n_{J,s}}
=: T_1 + T_2.
\end{eqnarray}

We now show that both $T_1$ and $T_2$ can be made arbitrarily small. We start with $T_2$, and bound each entry in the matrix
$\mathrm{Id} - \mathbb{E}X_{i}X^*_{i}$ by
\beq \label{eq:est2}
\Big|\Bigl( \mathrm{Id} - \mathbb{E}X_{i}X^*_{i}\Bigr)_{\lambda,\lambda^{'} \in \tilde{\Lambda}_{J,s}}\Big|
= \Big|\sum_{n \in \Omega^c_J} \hat{\sigma}_{\lambda}(n)\overline{\hat{\sigma}_{\lambda^{'}}(n)} \Big|
\leq \Bigl( \sum_{n \in \Omega^c_J} |\hat{\sigma}_{\lambda}(n)|^2\Bigr)^{1/2}\Bigl( \sum_{n \in \Omega^c_J} |\hat{\sigma}_{\lambda^{'}}(n)|^2\Bigr)^{1/2},
\eeq
where $\Omega^c_J = \Omega_1 \cup \Omega_2 \cup \Omega_3$ with
\begin{eqnarray*}
\Omega_1 &=& \{n \in \Omega^c_J : |n_1| > 2^{J(1+\rho)}\}, \\
\Omega_2 &=& \{n \in \Omega^c_J : |n_2| > 2^{J(1+\rho)} \,\, \mathrm{and} \,\, |n_1| \leq 2^{J(1+\rho/2)}\}, \mbox{ and} \\
\Omega_3 &=& \{n \in \Omega^c_J : |n_2| > 2^{J(1+\rho)} \,\, \mathrm{and} \,\, |n_1| > 2^{J(1+\rho/2)}\}.
\end{eqnarray*}
Next, for each  $\lambda = (j,s,m,p) \in \tilde{\Lambda}_{J,s}$, the decay conditions in Definition \ref{defi:g} imply
\beq \label{eq:est3}
\sum_{n \in \Omega^c_J} |\hat{\sigma}_{\lambda}(n)|^2 \lesssim \sum_{n \notin \Omega_J}\frac{2^{-\frac{3}{2}j-p}}{(1+|2^{-j}n_1|)^{2\beta}
(1+|2^{-j/2-p}(n_2 - s n_1)|)^{2\beta}}.
\eeq
Using the decomposition of $\Omega^c_J$, we first observe that, since $|n_2-sn_1| \ge ||n_2|-|n_1|| \ge 2^{J(1+\rho/2)}$
for all $n \in \Omega_2$, we have
\begin{eqnarray*}
\sum_{n \in \Omega_2} \frac{2^{-\frac{3}{2}j-p}}{(1+|2^{-j}n_1|)^{2\beta}
(1+|2^{-j/2-p}(n_2 - s n_1)|)^{2\beta}} &\lesssim& \sum_{|m| \ge 2^{J(1+\rho/2)}}
\frac{2^{-j/2-p}}{(1+|2^{-j/2-p}m|)^{2\beta}} \\
&\lesssim& 2^{\frac{J}{2}(1-2\beta)}.
\end{eqnarray*}
More\gk{over}, we have
\[
\sum_{n \in \Omega_1 \cup \Omega_3} \frac{2^{-\frac{3}{2}j-p}}{(1+|2^{-j}n_1|)^{2\beta}
(1+|2^{-j/2-p}(n_2 - s n_1)|)^{2\beta}} \lesssim \sum_{|n_1| > 2^{J(1+\rho/2)}}
\frac{2^{-j}}{(1+|2^{-j}n_1|)^{2\beta}} \lesssim 2^{J\rho(1-2\beta)}.
\]
Applying the last two estimates to \eqref{eq:est3} and inserting into \eqref{eq:est2}, we obtain
\[
\Big|\Bigl( \mathrm{Id} - \mathbb{E}X_{\ell}X^*_{\ell}\Bigr)_{\lambda,\lambda^{'}}\Big| \lesssim 2^{\frac{J\rho}{2}(1-2\beta)}.
\]
If $\beta \ge \frac{1}{2}(\frac{2}{\rho}+1)$, then
\[
\sup_{\lambda,\lambda^{'} \in \tilde{\Lambda}_{J,s}}
\Big|\Bigl( \mathrm{Id} - \mathbb{E}X_{\ell}X^*_{\ell}\Bigr)_{\lambda,\lambda^{'}}\Big|
\lesssim 2^{-J},
\]
which in turn implies that
\begin{eqnarray}\label{eq:tail}
T_2 = \vertiii{\mathbb{E}X_{i}X^*_{i}-\mathrm{Id}}_{n_{J,s}}
&\leq& \max_{\sharp(S) \leq n_{J,s}}\|((\mathbb{E}X_{i}X^*_{i}-\mathrm{Id})^T_S)^T_S\|_2 \nonumber \\
&\leq& \max_{\sharp(S) \leq n_{J,s}}\sqrt{\|((\mathbb{E}X_{i}X^*_{i}-\mathrm{Id})^T_S)^T_S\|_{\infty}\|((\mathbb{E}X_{i}X^*_{i}-\mathrm{Id})^T_S)^T_S\|_1} \nonumber \\
&\lesssim& \sqrt{2^{-J}2^{J/2(1+\wq{5\rho})}} = 2^{-J/4(1-\wq{5\rho})}.
\end{eqnarray}
For the last inequality, we used $n_{J,s} \lesssim 2^{J/2(1+\wq{5\rho})}$.

Next, we estimate $T_1$ in \eqref{eq:est1}. For this, let $n = (n_1,n_2) \in \Omega_J$, and first consider
\begin{eqnarray}\label{eq:ubound}
\sup_{\lambda \in \tilde{\Lambda}_{J,s}} \frac{|\hat{\sigma}_{\lambda}(n)|^2}{|p_{J,s}(n)|}
&\lesssim&
\Biggl(\sup_{j \ge 0} \frac{2^{-j}}{(1+|2^{-j}n_1|)^{2\beta}}\sup_{j,p \ge 0}\frac{2^{-j/2-p}}{(1+|2^{-j/2-p}(n_2-sn_1)|)^{2\beta}}\Biggr)\Biggl(\frac{1}{|p_{J,s}(n)|}\Biggr) \nonumber \\
&\lesssim&
\Biggl(\frac{1}{(1+|n_1|)(1+|n_2-sn_1|)}\Biggl)\Biggl(J^2(1+|n_1|)(1+|n_2-sn_1|)\Biggr)
\nonumber \\ &\lesssim&
J^2.
\end{eqnarray}
Thus, by definition of $X_i$ we conclude that there exists some $K > 0$ with
\[
\|X_i\|_{\infty} \leq K\cdot J \quad \mbox{for } i = 1,\dots,m_{J,s}.
\]
For the sake of brevity, for $p \ge 2$, we set
\[
E_p := (\mathbb{E}T_1^p)^{\frac{1}{p}} \quad \mbox{and} \quad
D_{p,n_{J,s},m_{J,s}} := C\cdot K \cdot{\eta}^{\frac{1}{p}}J\sqrt{p}\sqrt{n_{J,s}}\log(100n_{J,s})\sqrt{\log(4\tilde{M}_s)\log(10m_{J,s})},
\]
where $\eta$ is some constant \gk{in} the second claim of Lemma \ref{lem:lem02}, and $J$ is chosen sufficiently large so that
$T_2 \leq 1$, which is possible by \eqref{eq:tail}. Then, by Lemma \ref{lem:lem02}, \eqref{eq:tail}, and \eqref{eq:ubound},
{\allowdisplaybreaks
\begin{eqnarray*}
E^p_p &\leq& \Bigl(\frac{2D_{p,n_{J,s},m_{J,s}}}{\sqrt{m_{J,s}}}\Bigr)^p \mathbb{E}\vertiii{\frac{1}{m_{J,s}}\sum_{\ell = 1}^{m_{J,s}}X_{\ell}X^*_{\ell}}^{\frac{p}{2}}_{n_{J,s}} \\
&\leq& \Bigl(\frac{2D_{p,n_{J,s},m_{J,s}}}{\sqrt{m_{J,s}}}\Bigr)^p \mathbb{E}\Bigl( \vertiii{\frac{1}{m_{J,s}}\sum_{\ell = 1}^{m_{J,s}}X_{\ell}X^*_{\ell}-\mathrm{Id}}_{n_{J,s}}+1\Bigr)^{\frac{p}{2}} \\
&\leq& \Bigl(\frac{2D_{p,n_{J,s},m_{J,s}}}{\sqrt{m_{J,s}}}\Bigr)^p \mathbb{E} \Bigl( \vertiii{\frac{1}{m_{J,s}}\sum_{\ell = 1}^{m_{J,s}}(X_{\ell}X^*_{\ell} - \mathbb{E}X_{\ell}X^*_{\ell})}_{n_{J,s}} + T_2 +1 \Bigr)^{\frac{p}{2}} \\
&\leq& \Bigl(\frac{2D_{p,n_{J,s},m_{J,s}}}{\sqrt{m_{J,s}}}\Bigr)^p \mathbb{E} \Bigl( \vertiii{\frac{1}{m_{J,s}}\sum_{\ell = 1}^{m_{J,s}}(X_{\ell}X^*_{\ell} - \mathbb{E}X_{\ell}X^*_{\ell})}_{n_{J,s}} + 2  \Bigr)^{\frac{p}{2}}.
\end{eqnarray*}
}
From this, we have
\[
E_p \leq \frac{2D_{p,n_{J,s},m_{J,s}}}{\sqrt{m_{J,s}}}\sqrt{\mathbb{E}\frac{1}{m_{J,s}}\vertiii{\sum_{\ell = 1}^{m_{J,s}}(X_{\ell}X^*_{\ell} - \mathbb{E}X_{\ell}X^*_{\ell})}_{n_{J,s}} + 2}
\leq \frac{2D_{p,n_{J,s},m_{J,s}}}{\sqrt{m_{J,s}}} \sqrt{E_p + 2}.
\]
This yields
\[
E^2_p \leq \Biggl(\frac{2D_{p,n_{J,s},m_{J,s}}}{\sqrt{m_{J,s}}}\Biggr)^2E_p+ \Biggl(\frac{2D_{p,n_{J,s},m_{J,s}}}{\sqrt{m_{J,s}}}\Biggr)^2 2
\]
and also
\[
\Biggl( E_p - \frac{1}{2}\Biggl(\frac{2D_{p,n_{J,s},m_{J,s}}}{\sqrt{m_{J,s}}}\Biggr)^2\Biggr)^2  \leq \Biggl(\frac{2D_{p,n_{J,s},m_{J,s}}}{\sqrt{m_{J,s}}}\Biggr)^2 \Biggl( 2 + \frac{1}{4}\Biggl(\frac{2D_{p,n_{J,s},m_{J,s}}}{\sqrt{m_{J,s}}}\Biggr)^2\Biggr),
\]
implying that
\[
E_p \leq \frac{2D_{p,n_{J,s},m_{J,s}}}{\sqrt{m_{J,s}}}\Biggl(2 +\frac{1}{4}\Biggl(\frac{2D_{p,n_{J,s},m_{J,s}}}{\sqrt{m_{J,s}}}\Biggr)^2
\Biggr)^{1/2} + \frac{1}{2}\Biggl(\frac{2D_{p,n_{J,s},m_{J,s}}}{\sqrt{m_{J,s}}}\Biggr)^2.
\]
Assuming $\frac{2D_{p,n_{J,s},m_{J,s}}}{\sqrt{m_{J,s}}} \leq \frac{1}{2}$, we conclude that
\[
E_p \leq \kappa \frac{2D_{p,n_{J,s},m_{J,s}}}{\sqrt{m_{J,s}}} \quad \mbox{with }\kappa = \frac{\sqrt{33}+1}{4}.
\]
This implies
\[
(\mathbb{E} \min (1/2,T_1^p)\gk{)}^{\frac{1}{p}} \leq   (\min((1/2)^p,\mathbb{E}T_1^p))^{\frac{1}{p}}
\leq \min((1/2),E_p)
\leq \kappa\frac{2D_{p,n_{J,s},m_{J,s}}}{\sqrt{m_{J,s}}}.
\]
Now, by Lemma \ref{lem:lem01} (with $r =2, p_0 = 2$) and using that $\eta < 7$, for all $u \ge 2$, we obtain
\[
\mathbb{P}\Bigl(\min(1/2,T_1) \ge e^{\frac{1}{2}} (2\kappa) C \cdot KJ \sqrt{\frac{n_{J,s}}{m_{J,s}}}\log(100n_{J,s})\sqrt{\log(4\tilde{M}_s)\log(10m_{J,s})}u\Bigr) < 7e^{-\frac{u^2}{2}}.
\]
This implies that if
\begin{equation}\label{eq:num_measure}
\frac{m_{J,s}}{\log(m_{J,s})} \ge 2 n_{J,s} (e^{\frac{1}{2}}2\kappa C)^2\delta^{-2}K^2J^2\log^2(100n_{J,s})\log(7\epsilon^{-1})\log(4\tilde{M}_s) =: T_3,
\end{equation}
then, with probability at least $1-\epsilon$,
\beq \label{eq:est5}
T_1 \leq \delta \leq \frac{1}{2}
\eeq

Let now $\epsilon = 2^{-J}$ and note that $\tilde{M}_s \lesssim 2^{\frac{J}{2}(3+\rho)}$. Since $n_{J,s} \sim J\cdot 2^{\frac{J-j_0}{2}}2^{\wq{2J\rho}}$
and $m_{J,s} \sim 2^{\frac{J-j_0}{2}2^{\wq{3}J\rho}}$, we have
\begin{equation}\label{eq:num_measure02}
\frac{1}{T_3} \cdot \frac{m_{J,s}}{\log(m_{J,s})} \ge \tilde{C}\Bigl(\frac{\delta^2}{J^8}\Bigr)2^{\wq{J\rho}} =: T_4
\end{equation}
with some constant $\tilde{C} > 0$. Therefore, for any $\delta > 0$ and $\tilde{C} > 0$ in \eqref{eq:num_measure02}, one may choose
a sufficiently large $J>0$ such that $T_4 \ge 1$. This proves \eqref{eq:num_measure}, and hence \eqref{eq:est5}.

Estimating \eqref{eq:est1} by \eqref{eq:tail} and \eqref{eq:est5}, implies that the restricted isometry constant of $\tilde{A}$ can
indeed be made arbitrary small by choosing \gk{$J$} sufficiently large. As discussed before, by Theorem \ref{thm:candes}, the proposition
is proved. \hfill \qed

\subsection{Proof of Theorem \ref{thm:main}}

We start by defining dyadic cubes $Q_{j,\ell}$ as
\[
Q_{j,\ell} := 2^{-j/2}[0,1]^2+2^{-j/2}\ell \quad \text{for }  \ell \in \Z^2.
\]
Those cubes intersecting the discontinuity curve $\Gamma$ are of particular interest, hence we set
\[
\mathcal{Q}_j := \{Q_{j,\ell} : \mathrm{int}(Q_{j,\ell}) \cap \Gamma \neq \emptyset\},
\]
where $\mathrm{int}(Q_{j,\ell})$ is the interior set of $Q_{j,\ell}$. We assume that the discontinuity curve
$\Gamma$ is given by $x_1 = E(x_2)$ with $E \in C^2([0,1])$ and only consider this case. In fact, for sufficiently
large $j$, the discontinuity curve $\Gamma$ can be expressed as either $x_1 = E(x_2)$ or $x_2 = \tilde E(x_1)$ in
$Q_{j,\ell} \in \mathcal{Q}_j$ and the same arguments can be applied for $x_2 = \tilde E(x_1)$ except for switching
the order of variables.

For each $Q_{j,\ell} \in \mathcal{Q}_j$, let $E_{j,\ell}$ be a $C^2$ function such that
\[
\Gamma \cap \mathrm{int}(Q_{j,\ell}) = \{(x_1,x_2) \in \mathrm{int}(Q_{j,\ell}) : x_1 = E_{j,\ell}(x_2)\}.
\]
Again, for sufficiently large $j$ and each $Q_{j,\ell} \in \mathcal{Q}_j$, we may assume either
\[
\gk{\|E^{'}_{j,\ell}\|_{\infty} \leq 3 \quad \mbox{or} \quad} \inf_{(x_1,x_2) \in \mathrm{int}(Q_{j,\ell})}|E^{'}_{j,\ell}(x_2)| > 3/2.
\]
For this, we \wq{only consider the case when}  $\|E^{'}_{j,\ell}\|_{\infty} \leq 3$ for any $Q_{j,\ell} \in \mathcal{Q}_j$ and \gk{a}
similar argument can be applied for the latter case. Moreover, we define the orientation of the discontinuity curve $\Gamma$ in each dyadic cube $Q_{j,\ell}$ by
\begin{equation}\label{eq:local_slope}
\hat{s}_{j,\ell} = E^{'}_{j,\ell}(\hat{x}_2) \quad \text{for some} \,\, (E_{j,\ell}(\hat{x}_1),\hat{x}_2) \in \mathrm{int}(Q_{j,\ell}) \cap \Gamma.
\end{equation}

We now aim to show that Proposition \ref{prop:cs_aniso_wavelet} can be applied, which will provide one ingredient to the estimate for
the error $\|f-\hat{f}\|_2$. For this, let $J > 0$ be fixed. For
each $s = s(\lceil j_0/2 \rceil,q_0)\in S_{J/2}$ and $j \ge j_0$, we choose $k_{j,s} \in \Z$ so that $s = \frac{k_{j,s}}{2^{j/2}}$.
Next, for each $\ell_2 \in \{0,\dots,2^{j/2}-1\}$ with $j \ge j_0$, there exists some point $\xi_{\ell_2} \in [0,1]$ such that
\[
E^{'}(\ell_2/2^{j/2}) = E^{'}(0) + E^{''}(\xi_{\ell_2})\frac{\ell_2}{2^{j/2}}.
\]
Notice that we may assume $\hat{s}_{j,\ell} = E^{'}(\ell_2/2^{j/2})$ with $\ell = (\ell_1,\ell_2)$, where
$\hat{s}_{j,\ell}$ is the orientation of $\Gamma$ in $Q_{j,\ell} \in \mathcal{Q}_j$ defined as in \eqref{eq:local_slope} with
$\ell = (\ell_1,\ell_2)$. Letting $\hat k_{j,s}(\ell) := k_{j,s}+2^{j/2}\hat{s}_{j,\ell}$ \gk{and} using the relation $k_{j,s} = 2^{j/2}s$,
we conclude that, for each $Q_{j,\ell} \in \mathcal{Q}_j$,
\[
\hat{k}_{j,s}(\ell) = 2^{j/2}\Bigl(s + E^{'}(0) + E^{''}(\xi_{\ell_2})\frac{\ell_2}{2^{j/2}}\Bigr).
\]
From this, for each $s = s(\lceil j_0/2 \rceil,q_0) \in \mathbb{S}_{J/2}$, in the case $j_0 \ge J/4$ we define
\begin{eqnarray*}
\wq{\tilde{\Lambda}^0_{J,s}} &=& \{\lambda = (-1,s,m,p) \,\,\text{or} \,\,(j,s,m,p) \in \tilde{\Lambda}_{J,s} : \mathrm{int}(\supp(\psi^{\sharp}_{\lambda})) \cap \Gamma \cap \mathrm{int}(Q_{j,\ell}) \neq \emptyset \,\, \text{and} \\
&& \quad |\hat{k}_{j,s}(\ell)| \leq 2^{\frac{J-j}{4}} \,\, \text{for} \,\, Q_{j,\ell} \in \mathcal{Q}_j, j = j_0,\dots,J\},
\end{eqnarray*}
and in the case $j_0 < J/4$, we set
\begin{eqnarray*}
\wq{\tilde{\Lambda}^0_{J,s}} &=&
\{\lambda = (j,s,m,p) \in \tilde{\Lambda}_{J,s} : j = -1,j_0,\dots,\frac{J}{4}-1\} \\ &\cup&
\{\lambda = (j,s,m,p) \in \tilde{\Lambda}_{J,s} : \mathrm{int}(\supp(\psi^{\sharp}_{\lambda})) \cap \Gamma \cap \mathrm{int}(Q_{j,\ell}) \neq \emptyset \,\, \text{and} \,\, |\hat{k}_{j,s}(\ell)| \leq 2^{\frac{J-j}{4}} \\
&& \quad \text{for} \,\, Q_{j,\ell} \in \mathcal{Q}_j, j = \frac{J}{4},\dots,J\}.
\end{eqnarray*}
\wq{This \gk{can be regarded as} the set of estimated indices for the $n_{J,s}$ largest shearlet coefficients\gk{. Hence}
sparsity patterns in the shearlet coefficients are encoded here.}

We next estimate the sizes of those sets. For this, note that, for $s \in \mathbb{S}_{J/2}$, $j \ge 0$, \wq{$p \in \N_0$} fixed and each
$Q_{j,\ell} \in \mathcal{Q}_j$, we have
\begin{equation}\label{eq:count01}
\sharp\bigl( \{\lambda = (j,s,m,p) \in \tilde\Lambda_{J,s} : \mathrm{int}(\supp(\psi^{\sharp}_{\lambda})) \cap \mathrm{int}(Q_{j,\ell}) \cap \Gamma \neq \emptyset \}  \bigr) \lesssim \wq{2^{2J\rho}}(1+|\hat{k}_{j,s}(\ell)|).
\end{equation}
\wq{To show estimate \eqref{eq:count01}, we first consider the simple case, namely when $\hat s_{j,\ell} = 0$ with $\ell = (0,0)$
and $s = 0$. This implies $\hat k_{j,s}(\ell) = 0$ in \eqref{eq:count01}. From Proposition \ref{prop:extra_shearlets}, we have
\begin{equation}\label{eq:normal_supp}
\text{int}(\text{supp}(\psi^{\sharp}_{\lambda})) \subset A^{-1}_{j}(A_{j\rho}[-L,L]^2+D_pm) \quad \text{for some}\,\, L>0.
\end{equation}
Also, since the discontinuity curve $x_1 = E(x_2)$ is $C^2$ smooth, there exists some positive constant $K>0$ such that
\[
\mathrm{int}(Q_{j,\ell}) \cap \Gamma \subset A^{-1}_j([0,K]^2).
\]
Therefore,
\begin{eqnarray*}
\text{LHS in \eqref{eq:count01}} &\leq& \sharp\bigl( \{m \in \Z^2 : A^{-1}_{j}(A_{j\rho}[-L,L]^2+D_pm) \cap A^{-1}_j([0,K]^2) \neq \emptyset \} \bigr) \\
&\leq& \sharp\bigl( \{m \in \Z^2 : (A_{j\rho}[-L,L]^2+D_pm) \cap [0,K]^2 \neq \emptyset \}\bigr) \\
&\lesssim& |A_{j\rho}|\cdot|D^{-1}_p| = 2^{\frac{3}{2}j\rho}2^{p}.
\end{eqnarray*}
This \gk{yields} \eqref{eq:count01} with $\hat k_{j,s}(\ell) = 0$, since $j \leq J$ and $p \leq \frac{J\rho}{2}$ for
$\lambda = (j,s,m,p) \in \tilde \Lambda_{J,s}$. For the general case, we note that \eqref{eq:normal_supp} is precisely
the form of standard shearlets considered in \cite{KL11} except for \gk{an} additional scaling matrix $A_{j\rho}$ and
oversampling matrix $D_p$\gk{. This} allows us to} use the same argument as in \cite{KL11}(page 19) to show \eqref{eq:count01}\gk{. And}
the  additional factor \wq{$2^{2J\rho}$} comes from the oversampling parameter $p$ associated with sampling matrix \wq{$D_p$ and $A_{j\rho}$
in \eqref{eq:normal_supp}.} 
In addition, we will use the observation that each monotonic function $h \in L^1([a,b])$ satisfies
\begin{equation}\label{eq:number_of_indices}
\sum_{\{\ell : \hat{k}_{j,s}(\ell) \in [a,b], Q_{j,\ell} \in \mathcal{Q}_j\}} h(\hat{k}_{j,s}(\ell)) \lesssim \frac{1}{\inf_{t \in [0,1]} |E^{''}(t)|}\int_{[a,b]}|h(x)|dx
\end{equation}
Now, let $C_{E} := \frac{1}{\inf_{t \in [0,1]} |E^{''}(t)|}$. Notice that this is a positive constant, since the discontinuity
curve $\Gamma$ given by $x_1 = E(x_2)$ is a $C^2$ smooth curve of non-vanishing curvature. By \eqref{eq:count01} and \eqref{eq:number_of_indices},
in the case $j_0 \ge J/4$, we then have
{\allowdisplaybreaks
\begin{eqnarray*}
\sharp(\wq{\tilde{\Lambda}^0_{J,s}}) &\lesssim& \sum_{j = j_0}^{J}\sum_{\{\ell : |\hat{k}_{j,s}(\ell)| \leq 2^{\frac{J-j}{4}}, Q_{j,\ell} \in \mathcal{Q}_j\}}\wq{2^{2J\rho}}(1+|\hat{k}_{j,s}(\ell)|) \\
&\lesssim& C_E\cdot\sum_{j = j_0}^{J} \wq{2^{2J\rho}}2^{\frac{J-j}{2}}\\
& \lesssim& C_E\cdot 2^{\frac{J-j_0}{2}}\wq{2^{2J\rho}}.
\end{eqnarray*}
}
Also, in the case $j_0 < J/4$, we can compute
{\allowdisplaybreaks
\begin{eqnarray*}
\sharp(\wq{\tilde{\Lambda}^0_{J,s}}) &\lesssim&  \sum_{j = j_0}^{J/4-1}\sharp(\{\lambda = (j,s,m,p)\in \tilde{\Lambda}_{J,s} : p \leq \frac{J\rho}{2}\})
+ \sum_{j = J/4}^{J}\sum_{\{\ell : |\hat{k}_{j,s}(\ell)| \leq 2^{\frac{J-j}{4}}, Q_{j,\ell} \in \mathcal{Q}_j\}} \hspace*{-1cm} 2^{\frac{J\rho}{2}}(1+|\hat{k}_{j,s}(\ell)|) \\
&\lesssim& \sum_{j = j_0}^{J/4-1} \wq{2^{2J\rho}}2^{\frac{3}{2}j} + C_E \cdot \sum_{j = J/4}^{J} \wq{2^{2J\rho}}2^{\frac{J-j}{2}} \\
&\lesssim& C_E \cdot 2^{\frac{3}{8}J}\wq{2^{2J\rho}}\\
& \leq& C_E \cdot 2^{\frac{J-j_0}{2}}\wq{2^{2J\rho}}.
\end{eqnarray*}
}
Proposition \ref{prop:cs_aniso_wavelet} then implies
\begin{equation}\label{eq:sub_error}
\|{\bf c_{J,s}} - {\bf\hat{c}_{J,s}}\|^2_2 \lesssim (J^{-1})\wq{2^{-2J\rho}}2^{\frac{-J+j_0}{2}}
\Bigl(\sum_{\lambda \in \tilde{\Lambda}_{J,s} \cap(\wq{\tilde{\Lambda}^0_{J,s}})^c}|\langle f,\psi_{\lambda}\rangle|\Bigr)^2.
\end{equation}
\wq{with $n_{J,s} \sim J2^{\frac{J-j_0}{2}}2^{2J\rho}$ and $m_{J,s} \sim 2^{\frac{J-j_0}{2}}2^{3J\rho}$.}
Notice that we here used the fact that ${\bf c_{J,s}} = (\langle f,\psi_{\lambda}\rangle)_{\lambda \in \tilde{\Lambda}_{J,s}}$.

In a second step, we now estimate the error $\|f-\hat{f}\|_2$, thereby finishing the proof. We begin by estimating this
error by the three terms
\begin{eqnarray} \nonumber
\|f-\hat{f}\|^2_2 &\lesssim& \sum_{j_0 = 0}^{J}\sum_{s \in \{s(\lceil j^{'}/2 \rceil,q^{'}) \in \mathbb{S}_{J/2} : j^{'} = j_0\}}
\hspace*{-1cm} \|{\bf c_{J,s}}-{\bf \hat{c}_{J,s}}\|^2_2 + \sum_{j \ge J} \sum_{\lambda \in \Lambda_j}|\langle f,\psi_{\lambda}\rangle|^2 + \sum_{j = 0}^{J} \sum_{\lambda \in \Lambda_j \cap (\Lambda^0_j)^c}
|\langle f,\psi_{\lambda}\rangle|^2 \\ \label{eq:est100}
&=:& \mathrm{(I)} + \mathrm{(II)} + \mathrm{(III)},
\end{eqnarray}
where
\[
\Lambda_j = \{\lambda = (j^{'},s,m,p) \in \Lambda : j^{'} = j\} \mbox{ and }
\Lambda^0_j = \{\lambda = (j,s,m,p) \in \Lambda_j : p \leq \max(\frac{j\rho}{2},\frac{J\rho}{2})\}.
\]

We now turn to analyzing $\mathrm{(I)}, \mathrm{(II)}$, and  $\mathrm{(III)}$. Starting with $\mathrm{(I)}$, we let
$s = s(\lceil j_0/2 \rceil,q_0) \in \mathbb{S}_{J/2}$ and consider two cases:
\\
{\it Case 1}: $j_0 \ge J/4$. First,
\begin{eqnarray*}
\lefteqn{{\tilde{\Lambda}_{J,s}} \cap (\wq{\tilde{\Lambda}^0_{J,s}})^c}\\  &=& \{\lambda = (-1,s,m,p) \,\, \text{or} \,\,(j,s,m,p) \in \tilde{\Lambda}_{J,s} : \mathrm{int}(\supp(\psi^{\sharp}_{\lambda})) \cap \Gamma \cap \mathrm{int}(Q_{j,\ell}) \neq \emptyset \,\, \text{and} \\
&& \text{for} \,\, |\hat{k}_{j,s}(\ell)| > 2^{\frac{J-j}{4}} \,\, Q_{j,\ell} \in \mathcal{Q}_j, j = j_0,\dots,J\} \cup \{\lambda \in \tilde{\Lambda}_{J,s}  : \mathrm{int}(\supp(\psi^{\sharp}_{\lambda})) \cap \Gamma = \emptyset \} \\ &=:& \mathcal{I}_0 \cup \mathcal{I}_1.
\end{eqnarray*}
By \eqref{eq:sub_error}, we obtain
\begin{equation}\label{eq:com01}
\|{\bf c_{J,s}}-{\bf \hat{c}_{J,s}}\|^2_2 \lesssim (J^{-1})2^{\frac{j_0}{2}}2^{-\frac{J}{2}}\wq{2^{-2J\rho}}\Bigl( \sum_{\lambda \in \mathcal{I}_0} |\langle f,\psi_{\lambda}\rangle| + \sum_{\lambda \in \mathcal{I}_1} |\langle f,\psi_{\lambda}\rangle|\Bigr)^2.
\end{equation}
Setting
\[
\mathcal{L}_j := \{\ell : |\hat{k}_{j,s}(\ell)|>2^{\frac{J-j}{4}}, Q_{j,\ell} \in \mathcal{Q}_j\},
\]
by Proposition \ref{prop:coeff_decay}(i), \eqref{eq:count01} and \eqref{eq:number_of_indices}, for sufficiently large $J>0$ we have
{\allowdisplaybreaks
\begin{eqnarray}\label{eq:com02}
2^{\frac{j_0}{2}}2^{-\frac{J}{2}}\wq{2^{-2J\rho}}\Bigl( \sum_{\lambda \in \mathcal{I}_0} |\langle f,\psi_{\lambda}\rangle| \Bigl)^2 &\lesssim&
2^{\frac{j_0}{2}}2^{-\frac{J}{2}}\wq{2^{-2J\rho}}\Bigl( \sum_{j = j_0}^{J}\wq{2^{2J\rho}}\sum_{\ell \in \mathcal{L}_j }(1+|\hat{k}_{j,s}(\ell)|)\frac{2^{-\frac{3}{4}j}2^{3\rho j}}{|\hat{k}_{j,s}(\ell)|^3}\Bigl)^2 \nonumber \\
&\lesssim& C_E\cdot2^{\frac{j_0}{2}}2^{-\frac{J}{2}}\wq{2^{-2J\rho}}\Bigl( \sum_{j = j_0}^{J}
\wq{2^{2J\rho}}2^{-\frac{3}{4}j}2^{3\rho j}2^{-\frac{J-j}{4}}\Bigr)^2 \nonumber \\
&\lesssim& 2^{-\frac{j_0}{2}}2^{-J}\wq{2^{8J\rho}}.
\end{eqnarray}}
Using that, for each $j \in \{j_0-1,\dots,J\}$,
\[
\sharp(\{ \lambda = (j^{'},s,m,p) \in \mathcal{I}_1 : j^{'} = j \}) \lesssim 2^{\frac{3}{2}j+\frac{J\rho}{2}},
\]
Proposition \ref{prop:coeff_decay}(ii) implies that
\begin{eqnarray}\label{eq:com03}
2^{\frac{j_0}{2}}2^{-\frac{J}{2}}\wq{2^{-2J\rho}}\Bigl( \sum_{\lambda \in \mathcal{I}_1} |\langle f,\psi_{\lambda}\rangle| \Bigl)^2 &\lesssim& 2^{\frac{j_0}{2}}2^{-\frac{J}{2}}\wq{2^{-2J\rho}}\Bigl( \sum_{j = j_0}^{J}
2^{\frac{3}{2}j+\frac{J\rho}{2}}(2^{4\rho j}2^{-3j}2^{\frac{J\rho}{4}})\Bigr)^2 \nonumber \\
&\lesssim& 2^{-\frac{j_0}{2}}2^{-J}\wq{2^{8J\rho}}.
\end{eqnarray}
By \eqref{eq:com01}, \eqref{eq:com02} and \eqref{eq:com03}, we have
\beq \label{eq:est10}
\|{\bf c}_{J,s} - {\bf \hat{c}_{J,s}}\|^2_2 \lesssim (J^{-1})2^{-\frac{j_0}{2}}2^{-J}\wq{2^{8J\rho}}.
\eeq
\\
{\it Case 2}: $j_0 < J/4$. First, in analogy with the previous case, we observe that
\begin{eqnarray*}
\lefteqn{{\tilde{\Lambda}_{J,s}} \cap(\wq{\tilde{\Lambda}^0_{J,s}})^c}\\ &=& \{\lambda = (j,s,m,p) \in \tilde{\Lambda}_{J,s} : \mathrm{int}(\supp(\psi^{\sharp}_{\lambda})) \cap \Gamma \cap \mathrm{int}(Q_{j,\ell}) \neq \emptyset \,\, \text{and} \,\, |\hat{k}_{j,s}(\ell)| > 2^{\frac{J-j}{4}} \\
&& \quad \text{for} \,\, Q_{j,\ell} \in \mathcal{Q}_j, j = J/4,\dots,J\} \\ &\cup& \{\lambda = (j,s,m,p) \in \tilde{\Lambda}_{J,s}  : \mathrm{int}(\supp(\psi^{\sharp}_{\lambda}))
\cap \Gamma = \emptyset, j = J/4,\dots,J \} \\
&=:& \tilde{\mathcal{I}}_1 \cup \tilde{\mathcal{I}}_2.
\end{eqnarray*}
Hence,
\[
\|{\bf c}_{J,s}-{\bf \hat{c}_{J,s}}\|_2^2 \lesssim (J^{-1})2^{\frac{j_0}{2}}2^{-\frac{J}{2}}\wq{2^{-2J\rho}}\Bigl(
\sum_{\lambda \in \tilde{\mathcal{I}}_1} |\langle f,\psi_{\lambda}\rangle| + \sum_{\lambda \in \tilde{\mathcal{I}}_2} |\langle f,\psi_{\lambda}\rangle|\Bigr)^2
\]
For each term $\sum_{\lambda \in \tilde{\mathcal{I}}_r} |\langle f,\psi_{\lambda}\rangle|$ with $r = 1,2$, we next apply the same argument
as in {\it Case 1} except for replacing $j_0$ by $J/4$, which similarly yields
\beq \label{eq:est11}
\|{\bf c}_{J,s} - {\bf \hat{c}_{J,s}}\|^2_2 \lesssim (J^{-1})2^{-\frac{j_0}{2}}2^{-J}2^{\wq{8J\rho}}.
\eeq
This shows that this estimate actually holds for each $s \in \mathbb{S}_{J/2}$.

Concluding this part, since
\[
\sharp(\{s(\lceil j^{'}/2 \rceil,q^{'}) \in \mathbb{S}_{J/2} : j^{'} = j_0\}) \lesssim 2^{j_0/2},
\]
\eqref{eq:est10} and \eqref{eq:est11} imply
\beq \label{eq:est13}
\mathrm{(I)} \lesssim \sum_{j_0 = 0}^{J} (2^{\frac{j_0}{2}})(J^{-1})(2^{-\frac{j_0}{2}}2^{-J}2^{\frac{13J\rho}{2}}) \lesssim  2^{-J(1-\wq{8\rho})}.
\eeq

Also, (II) can be estimated to show
\wq{
\begin{equation}\label{eq:estII}
\sum_{j \ge J}\sum_{\lambda \in \Lambda_j} |\langle f,\psi_{\lambda}\rangle|^2 \lesssim 2^{-J(1-8\rho)}
\end{equation}
in the same way as} in the proof of Theorem \ref{thm:sparsity} in \cite{KL15}. 
\wq{For (III), note that there \gk{exist} about $2^{2j+p}$ shearlets $\psi_{\lambda}$ with $\lambda = (j,s,m,p) \in \Lambda$ for each
fixed $j$ and $p \ge 0$, since $|s| \lesssim 2^{j/2}$\gk{. Moreover,} there \gk{exist} about $|A_j| \cdot |D^{-1}_p| = 2^{\frac{3}{2}j}2^{p}$ translates
for each shear parameter $s$. In particular, this implies
\begin{equation}\label{eq:p_count}
\sharp\big( \{\lambda = (j,s,m,p^{'}) \in \Lambda_j : p^{'} = p \}\big) \lesssim 2^{2j+p}.
\end{equation}
Here, we only need to count the number of shearlets $\psi_{\lambda}$ with $\text{int}(\text{supp}(\psi_{\lambda}))\cap \text{int}(\text{supp}(f)) \neq \emptyset$. Now using Proposition \ref{prop:control_p} and \eqref{eq:p_count}, we \gk{obtain}
\begin{eqnarray}\label{eq:estIII}
\sum_{j = 0}^{J}\sum_{\lambda \in \Lambda_j \cap (\Lambda^0_j)^c}|\langle f,\psi_{\lambda}\rangle|^2 \nonumber &\lesssim& \sum_{j=0}^{J}\sum_{p > \frac{J}{2}\rho}2^{2j+p}2^{-\alpha p} \nonumber \\
&\lesssim& 2^{-(\frac{(\alpha-1)\rho}{2}-2)J} \leq 2^{-J}.
\end{eqnarray}
For the last inequality, we used $\alpha \ge \frac{6}{\rho}+1$.}

Inserting the estimates for $\mathrm{(I)}, \mathrm{(II)}$, and  $\mathrm{(III)}$, namely \wq{\eqref{eq:est13}, \eqref{eq:estII}, and \eqref{eq:estIII},} respectively, into
\eqref{eq:est100} \wq{and using Lemma \ref{lemm:deltaJ}} completes the proof. \hfill \qed

\section{Numerical Results}\label{sec:numerics}

In this section, we provide numerical results for our directional sampling scheme described in \gk{Subs}ection \ref{subsec:scheme1}. 
\gk{We first provide details on the implementation of our sampling-reconstruction scheme, which we will in this section also
refer to as {\em directional sampling-reconstruction scheme}, or, if only the sampling part is meant, as {\em directional 
sampling scheme}. This part will be followed be numerical experiments on comparison of our scheme with other previously
developed schemes.}

\subsection{\gk{Implementation of our Sampling-Reconstruction Scheme}}

\wq{We start by recalling the notion of the discrete Fourier transform $\mathcal{F}$ of a sequence $\{a(n)\}_{n \in \Z^d}$, which is defined by
\[
\mathcal{F}(a) (\xi) = \sum_{n \in \Z^d} e^{-2\pi i n \cdot \xi}.
\]

\subsubsection{\gk{Discretization of the Filters $G^d_s$}}

We first describe the discretization of the filters $\tilde{G}^d_s$ from \eqref{eq:dfilters} and their duals. In \cite{KLR14}, we developed digital 
shearlet filters $\psi^{d}_j$ and $\phi^d$, which can be used to discretize $\hat g(A^{-1}_j \xi)$ and $\hat \varphi^0(\xi)$ in \eqref{eq:dfilters} 
by taking the discrete Fourier transform $\mathcal{F}$. \gk{Utilizing those, we obtain} a discrete version of $\hat G_0$ defined in \eqref{eq:dfilters} 
given as
\[
\mathcal{F}(G^d_0)(\xi) = |\mathcal{F}(\phi^d)(\xi)|^2 + \sum_{j = 0}^{J}|\mathcal{F}(\psi^d_j)(\xi)|^2.
\]
\gk{Notice that we only take scales $j$ up to $j = J$ with some finite scale $J$ for a given $N \times N$ digital image $u$ with $N \sim 2^J$. Later
we will choose} $J = 2$ and $4$ so that \gk{the} shearing parameters $s \in \mathbb{S}_{J/2}$ for the directional filters $G_s$ are given as 
$s = -1/2,0,1/2$ and $s=-3/4, \dots, 3/4$, respectively. 

\gk{Continuing,} for each $s \in \mathbb{S}_{J/2}$, we \gk{then} apply a linear operator $S^d_s$ faithfully discretizing $S_s$ to obtain \gk{the} 
digital directional filters
\[
\mathcal{F}(G^d_s)(\xi) = \mathcal{F}(S^d_s(G^d_0))(\xi),
\]
which now discretizes $\hat G_s (\xi)$ in \eqref{eq:dfilters}. We refer to \cite{KLR14} for more details on the digital shear operator $S^d_s$. 
Note that, for all $s \in \mathbb{S}_{J/2}$, we have fixed $j_0$ in \eqref{eq:dfilters} as $j_0 = 0$ for our directional filters $G^d_s$ to simplify 
our implementation. \gk{For an illustration of $\mathcal{F}({G}^d_s)$, we refer to Figure \ref{fig:directional_filter}.}

\begin{figure}[htb]
\begin{center}
\hspace*{\fill}
\includegraphics[width=0.3\textwidth]{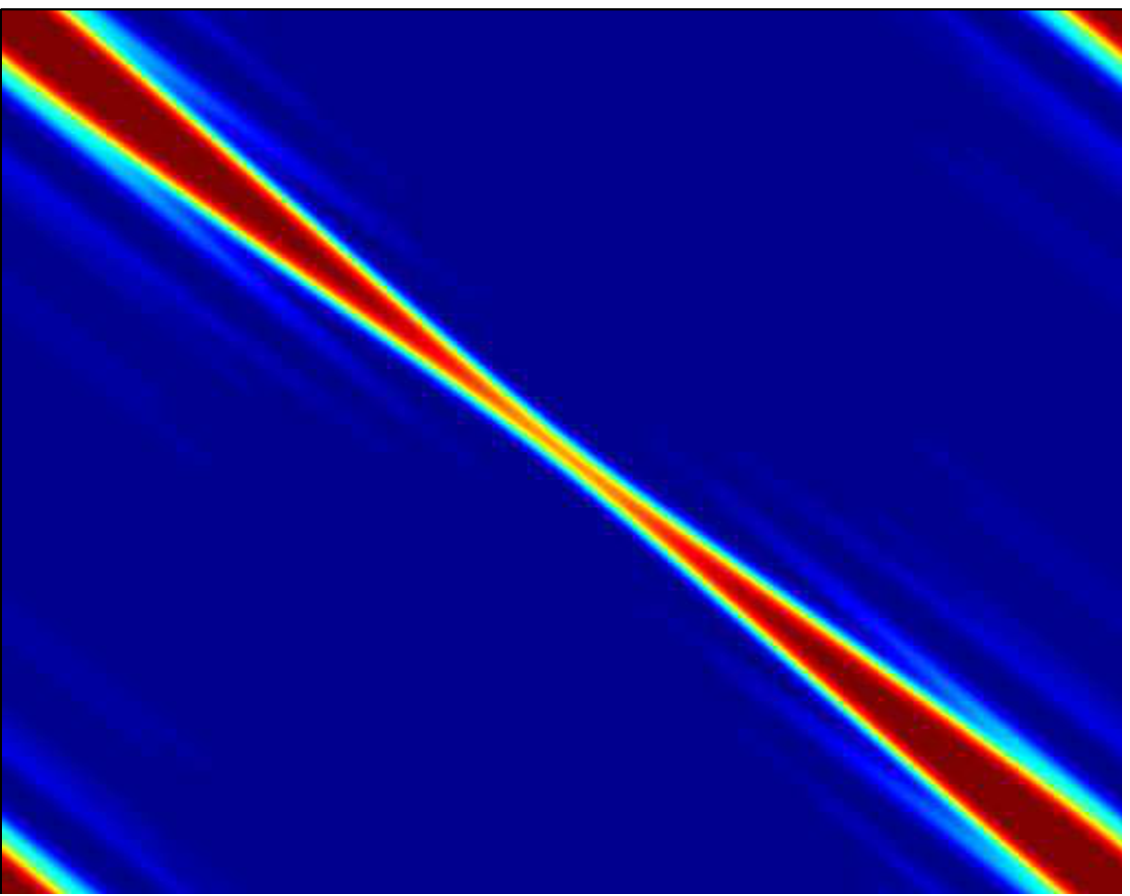}
\hfill
\includegraphics[width=0.3\textwidth]{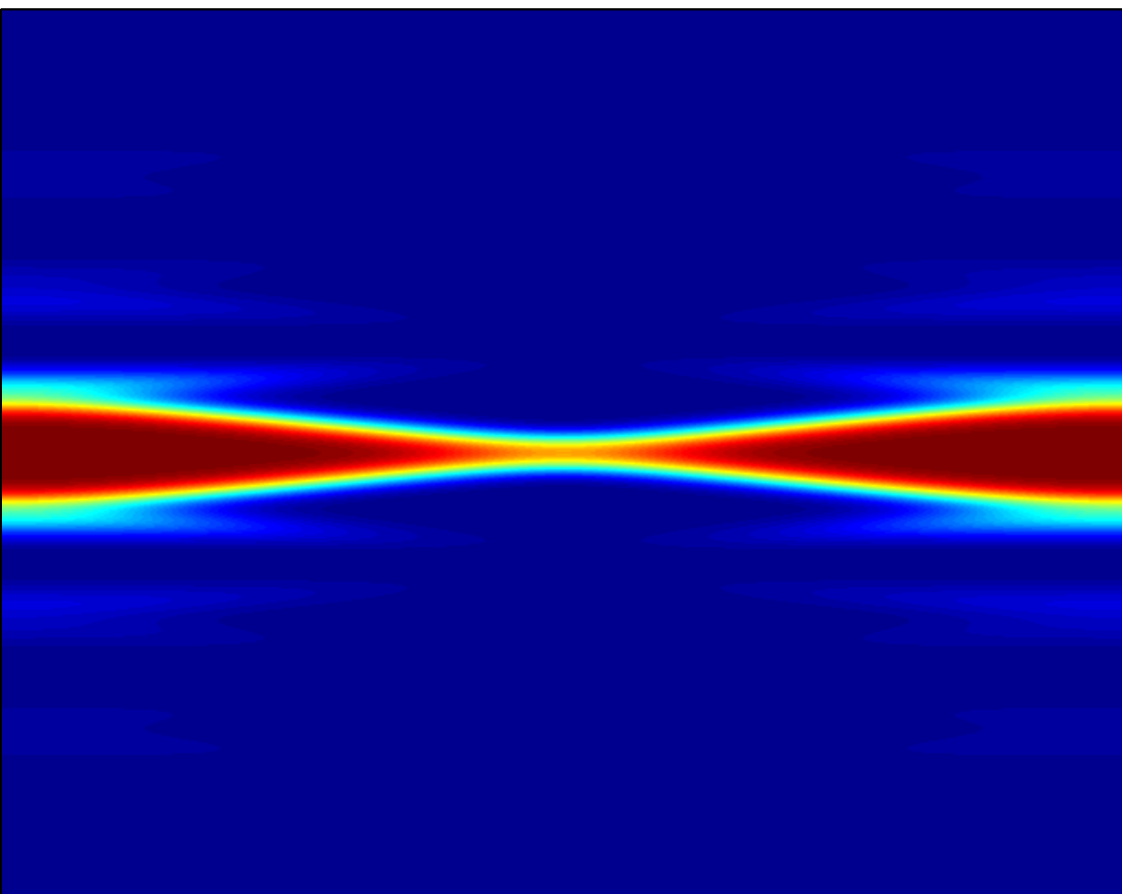}
\hspace*{\fill}
\put(-339,-13){(a)}
\put(-135,-13){(b)}
\end{center}
\caption{(a) 2D plot of  $\mathcal{F}({G}^d_s)$ with $s = 1$. (b) $\mathcal{F}(G^d_s)$ with  $s = 0$.}
\label{fig:directional_filter}
\end{figure}

In addition to those filters, we also need directional filters for $G_s \circ \wq{R}$. These filters are obtained by simply switching the variables. 
To simplify our presentation, we only consider $G^d_s$ associated with $A_j$ and $S_s$.  

We \gk{can} now define dual filters $\tilde{G}^d_s$ for $G^d_s$ by
\[
\mathcal{F}(\tilde{G}^d_s) = \frac{\overline{\mathcal{F}(G^d_s)}}{\sum_{s \in \mathcal{S}_{J/2}}|\mathcal{F}(G^d_s)|^2},
\]
where \gk{the bar denotes the} complex conjugate. \gk{As in \cite{KLR14},} one can ensure $\sum_{s \in \mathcal{S}_{J/2}}|\mathcal{F}(G^d_s)|^2 > C$ 
for some $C>0$ with a suitable choice for digital shearlet filters $\psi_j$. From this, we have the following reconstruction formula:
\[
u = \sum_{s \in \mathcal{S}_{J/2}}\tilde{G}^d_s \star G^d_s \star u
\]
where $\star$ is a discrete circular convolution.

\subsubsection{\gk{Discretization of our Sampling Scheme $\Delta_J$}}

For the probability density function $p_{J,s}$, \gk{in the implementation} we use
\begin{equation}\label{eq:dprob}
p_{J,s}(n) = \frac{c_s}{(1+|n_1|)^5(1+|2^{J/2}n_2-sn_1|)^5}
\end{equation}
for $s \in \mathbb{S}_{J/2}$ rather than \gk{the} one defined in \eqref{eq:probability} to incorporate the sparsity and anisotropy of
shearlets. \gk{The reason being that we} found this choice \gk{to in fact provide even} better performance.
From this, we define $\Delta_{J,s}$ by the set of Fourier sampling points randomly drawn according to the probability density function \eqref{eq:dprob} and let
\[
\Delta_J = \bigcup_{s \in \mathbb{S}_{J/2}} \Delta_{J,s}.
\]
\gk{Notice that f}or $\Delta_{J,s}$, we simply take the same number of sampling points for each $s \in \mathbb{S}_{J/2}$, and the total number of 
Fourier samples \gk{to be} given as
\[
\sharp(\Delta_J) = \sum_{s \in \mathbb{S}_{J/2}} \sharp(\Delta_{J,s}).
\]
With this set $\Delta_J$, we can now define our random mask operator $P_{\Delta_J}$ such that $P_{\Delta_J}(u)(n)$ \gk{equals} $u(n)$ if $n \in \Delta_J$ 
and $0$ otherwise. \gk{The Fourier sampling points $\Delta_J$ obtained by our directional sampling scheme, are illustrated in Figure \ref{fig:sampling_mask}(a).}

\begin{figure}[htb]
\begin{center}
\includegraphics[width=0.4\textwidth]{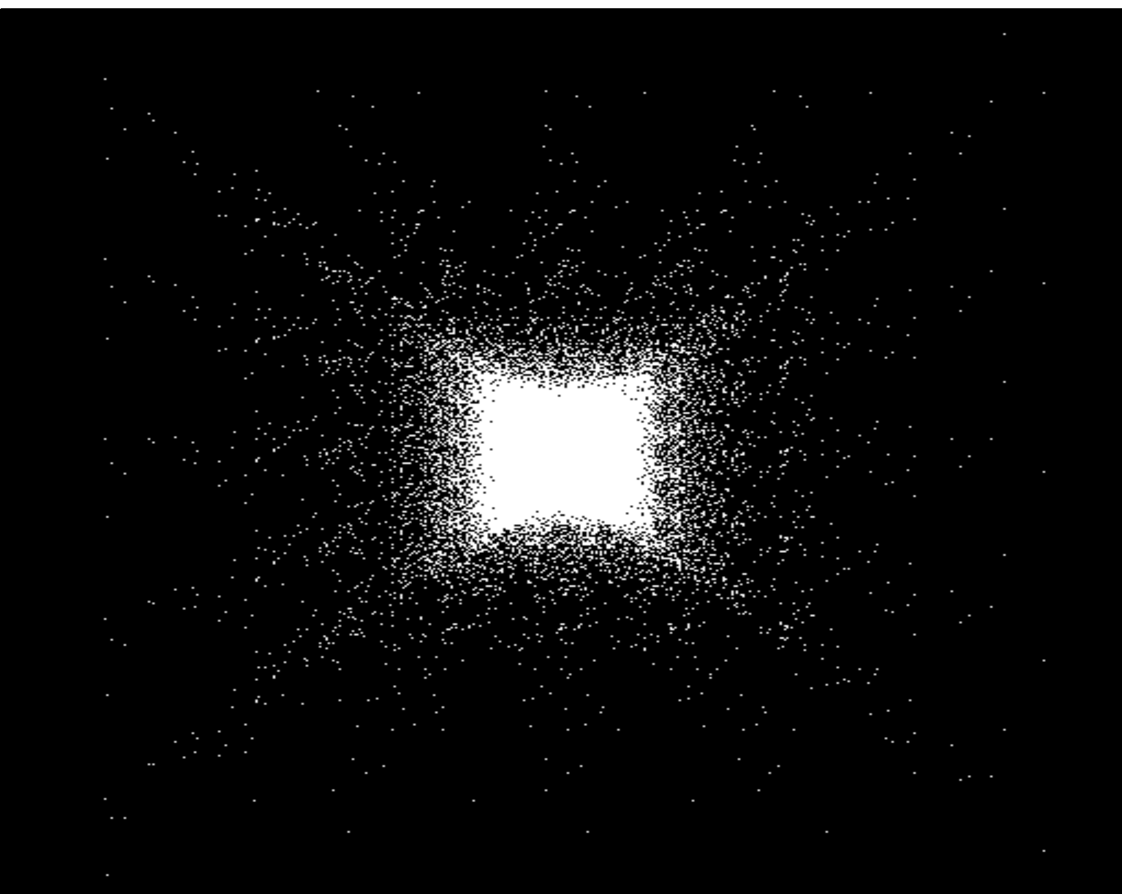}
\hspace*{0.5cm}
\includegraphics[width=0.4\textwidth]{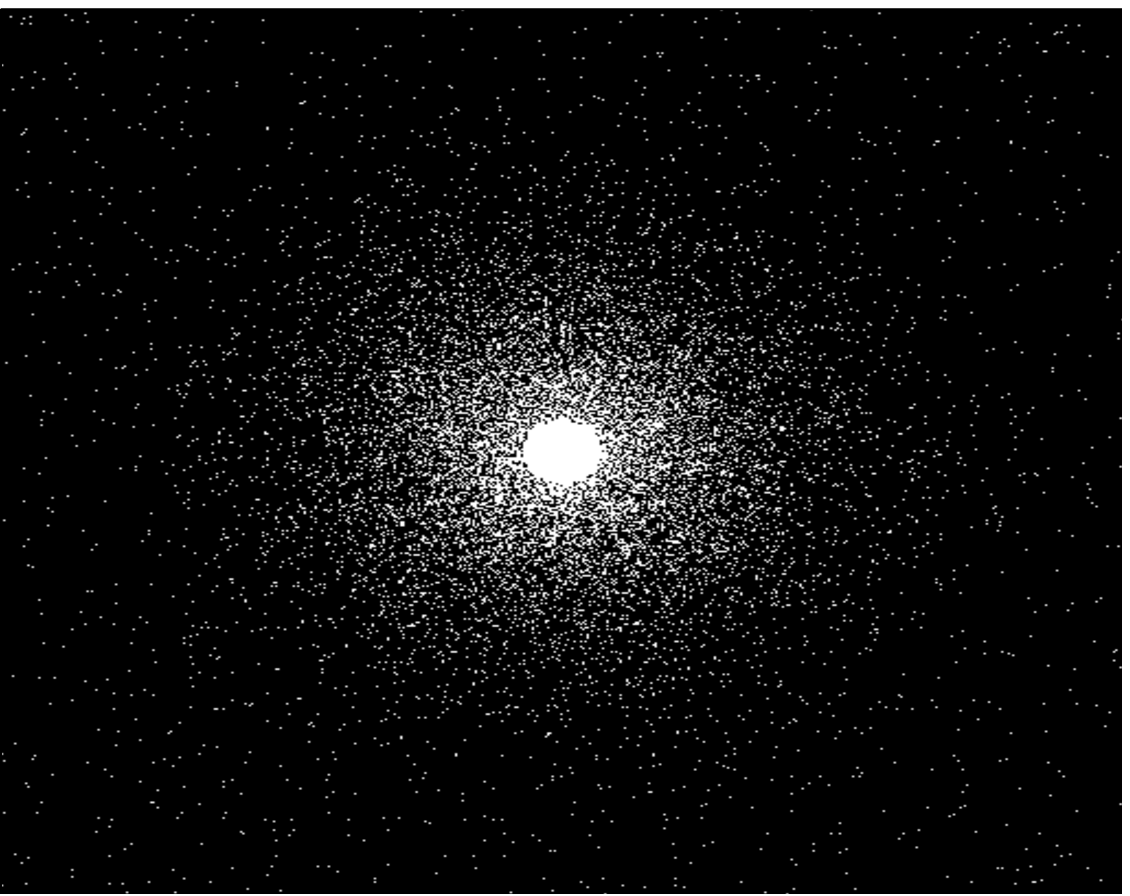}
\put(-310,-13){(a)}
\put(-101,-13){(b)}
\end{center}
\caption{Subsampling maps for Fourier measurements with 5\% subsampling with sampling set $\Delta_J$:
(a) Subsampling map for our directional sampling scheme -- for this, we take a union of all directional subsampling maps \gk{$\Delta_{J,s}$}.
(b) Subsampling map obtained with variable density sampling for wavelets -- see \cite{LDSP08} and \cite{KW13}.}
\label{fig:sampling_mask}
\end{figure}

\subsubsection{\gk{Discretization of the Reconstruction Scheme $\mathcal{R}(f,\Delta_J)$}}

Having defined the directional filters $G^d_s$, we now \gk{turn to} derive \gk{a discretization} for implementing \gk{our reconstruction
scheme \eqref{eq:recscheme} and \eqref{eq:l1_min}}. For this, we first \gk{let} $\mathcal{W}_J$ \gk{be} the discrete wavelet transform 
associated with the anisotropic scale matrix $A_j$ up to the finest scale $j = J$. For more details on $\mathcal{W}_J$, we refer to 
\cite{L10}. 

\gk{Next, note} that $P_{\Psi_{J,s}}$ in \eqref{eq:l1_min} is given as \eqref{eq:projection}, where \gk{the} basis elements $\sigma_{\lambda}$ are 
obtained by applying the shear operator $S_s$ for anisotropic wavelets associated with $A_j$ for each $s \in \mathbb{S}$. Therefore, for 
a given finite sequence of coefficients $c = (c_{\lambda})_{\lambda \in \tilde{\Lambda}_{J,s}}$, the wavelet expansion $\sum_{\lambda \in 
\tilde{\Lambda}_{J,s}} c_{\lambda} \sigma_{\lambda}$ can be discretized by $S^d_{s}(\mathcal{W}^*_J(c))$, where $\mathcal{W}^*_J$ is the 
conjugate transpose of $\mathcal{W}_J$ and $\mathcal{W}_J\mathcal{W}^*_J = \mathcal{W}^*_J\mathcal{W}_J = I$.

We are now ready to derive our discrete implementation for \eqref{eq:l1_min} as follows. For this, we let $u$ be a target image we want to 
recover from Fourier samples given by $y = P_{\Delta_J}\mathcal{F}(u)$. \gk{This is then achieved by first applying the discretization of 
\eqref{eq:l1_min} given by}
\begin{equation}\label{eq:implementation}
\hat{c}_s = \argmin\|c_s\|_1
\gk{\quad \text{subject to} \quad} \mathcal{F}(G^d_s) \odot y = P_{\Delta_J}\mathcal{F}(S^d_s(\mathcal{W}^*_J(c_s)))
\end{equation}
for each $s \in \mathbb{S}_{J/2}$, where $\odot$ is entry-wise multiplication between two vectors. After obtaining coefficient vectors $\gk{\hat{c}_s}$ 
from \eqref{eq:implementation}, \gk{we then combine those coefficient sequences by the discrete version of \eqref{eq:recscheme} given as}
\[
\hat u = \sum_{s \in \mathbb{S}_{J/2}} \tilde{G}^d_s \star S^d_s(\mathcal{W}^*_J(\hat{c}_s)),
\]
\gk{to obtain the reconstructed image $\hat u$.}}

\subsection{\gk{Comparison with other Schemes}}

We now compare our directional sampling scheme with the wavelet based sampling scheme in \cite{LDSP08} as well as
a shearlet based sampling scheme\gk{. For the second scheme,} we take a \gk{standard} shearlet system as implemented in {\tt ShearLab} \cite{KLR14}
with the same Fourier measurements we used for our directional sampling scheme, i.e., we solve
\[
\wq{\min_{g} \|\Psi_J g\|_1 \gk{\quad \text{subject to} \quad} P_{\Delta_J}(\mathcal{F}(u-g)) = 0,}
\]
where $\Psi_J$ is the shearlet transform up to scale $J$. To be more precise, we
consider the following sampling schemes:

\begin{itemize}
\item  {\bf shear08}: Our directional sampling scheme with 8 directional filters. The redundancy of the associated shearlet system is 8.
\item {\bf shear16}: Our directional sampling scheme with 16 directional filters. The redundancy of the associated shearlet system is 16.
\item {\bf shear}: Shearlet based sampling scheme with a usual shearlet system and the same subsamplng map as one used for our directional sampling scheme.
For this, we used a shearlet system with 4 4 8 8 directions across scales, which leads to \gk{a} redundancy of 25.
\item {\bf wave01}: Wavelet based sampling scheme with a subsampling map obtained by variable density sampling -- see \cite{LDSP08} and \cite{KW13}.
\item {\bf wave02}: Wavelet based sampling scheme with the same subsampling map as \gk{the} one used for our directional sampling scheme.
\end{itemize}
All schemes are implemented in MATLAB and tested on a CPU 2.7GHz with 4.00GB memory. Figure \ref{fig:sampling_mask} illustrates
each of \gk{the} subsampling maps for directional sampling and variable density sampling. The directionality becomes very evident in
Figure \ref{fig:sampling_mask}(a) \gk{as compared to (b).}

Figure \ref{fig:experiments} shows PSNR values and running times for each of the sampling schemes for two test images.
It is evident that our directional sampling schemes {\bf shear08} and {\bf shear16} consistently outperform the wavelet
based sampling schemes {\bf wave01} and {\bf wave02} in terms of PSNR, while the running time of one of our directional
sampling schemes, {\bf shear08} is comparable with wavelet based sampling schemes. Moreover, with respect to PNSR
our new sampling scheme {\bf shear16} slightly outperforms the sampling scheme using the \gk{standard} shearlets {\bf shear},
but its running time is significantly faster, in fact about 4\gk{--}10 times faster depending on sampling rate.

\begin{figure}[h]
\begin{center}
\includegraphics[width=0.35\textwidth]{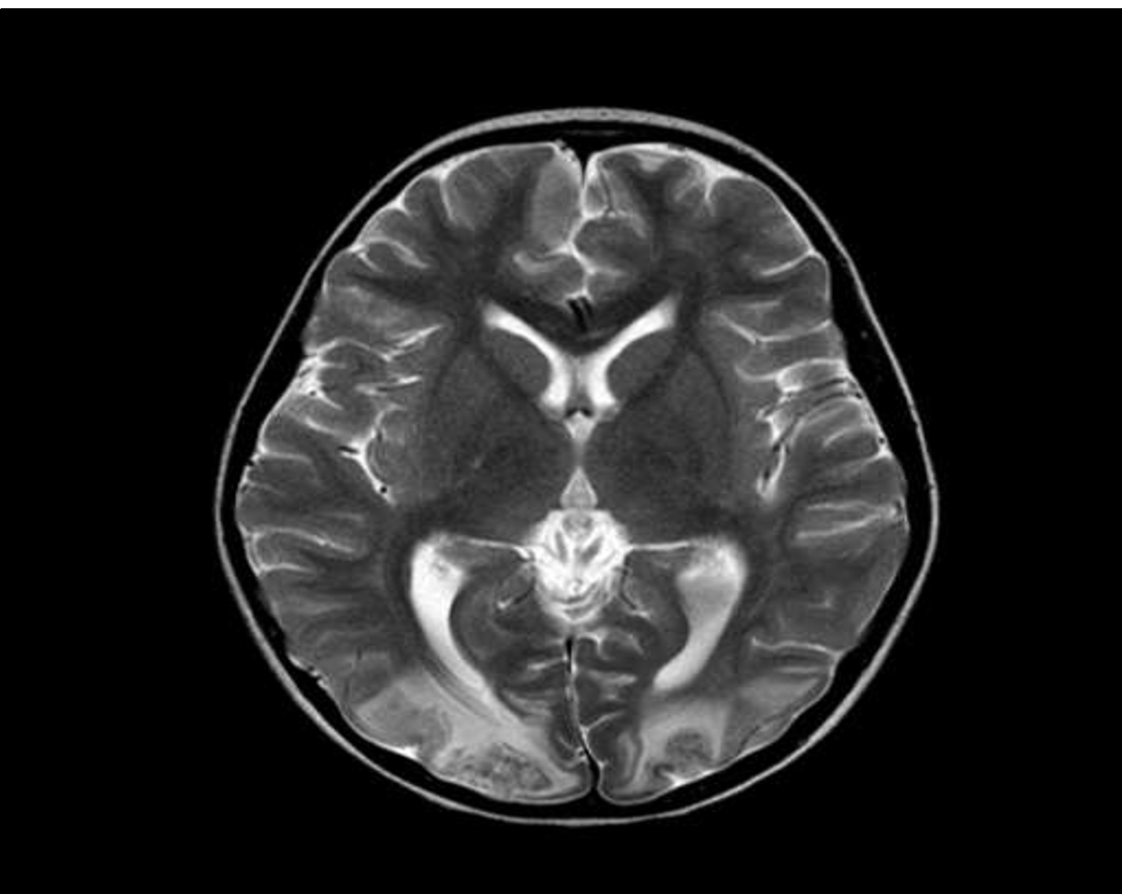}
\hspace*{0.5cm}
\includegraphics[width=0.35\textwidth]{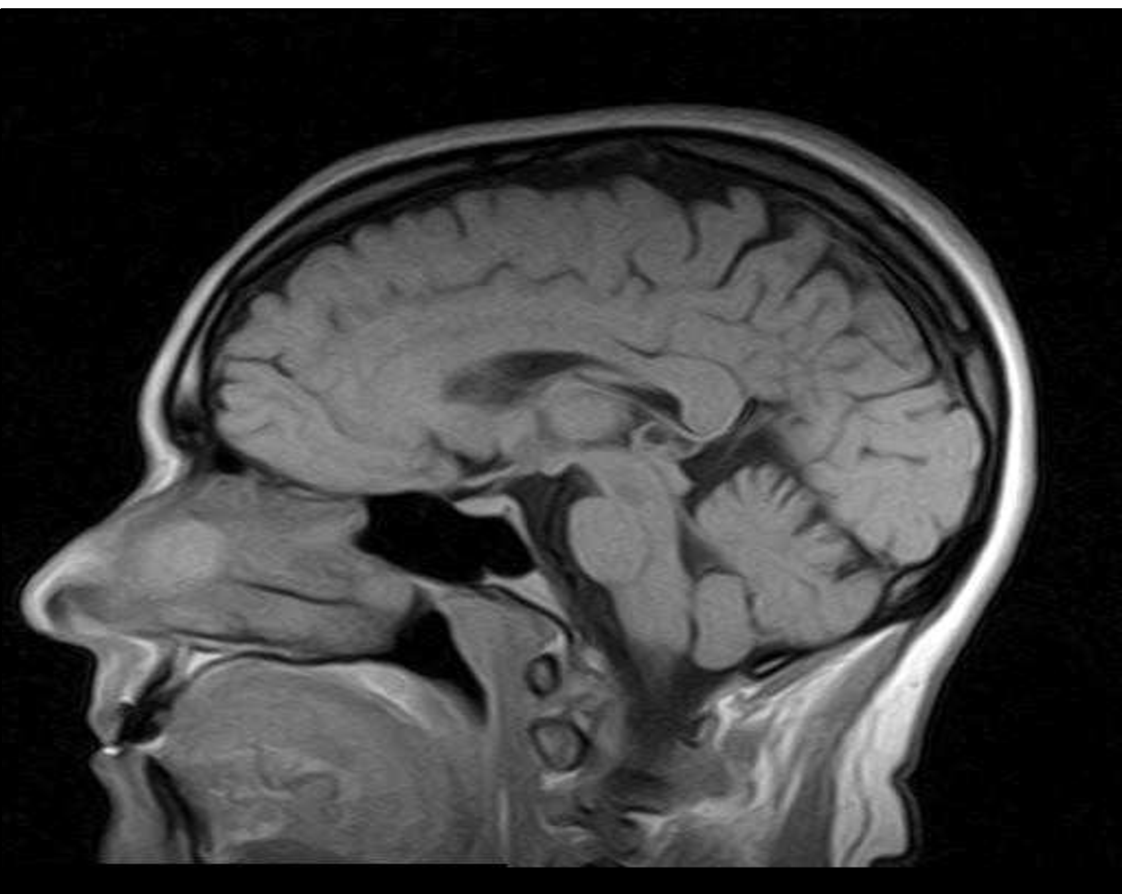}
\put(-273,-13){(a)}
\put(-85,-13){(b)}\\[3,5ex]
\includegraphics[width=0.4\textwidth]{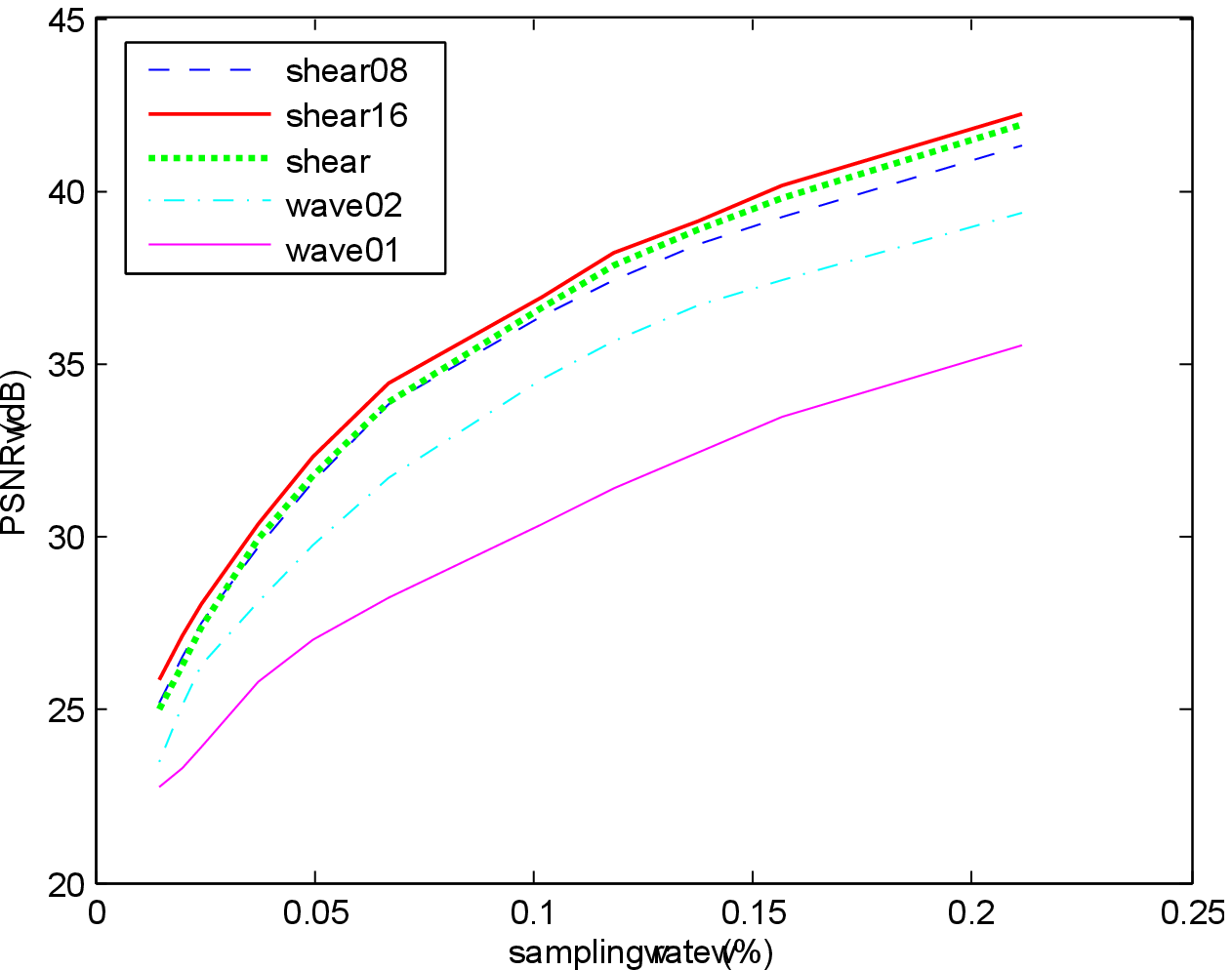}
\includegraphics[width=0.4\textwidth]{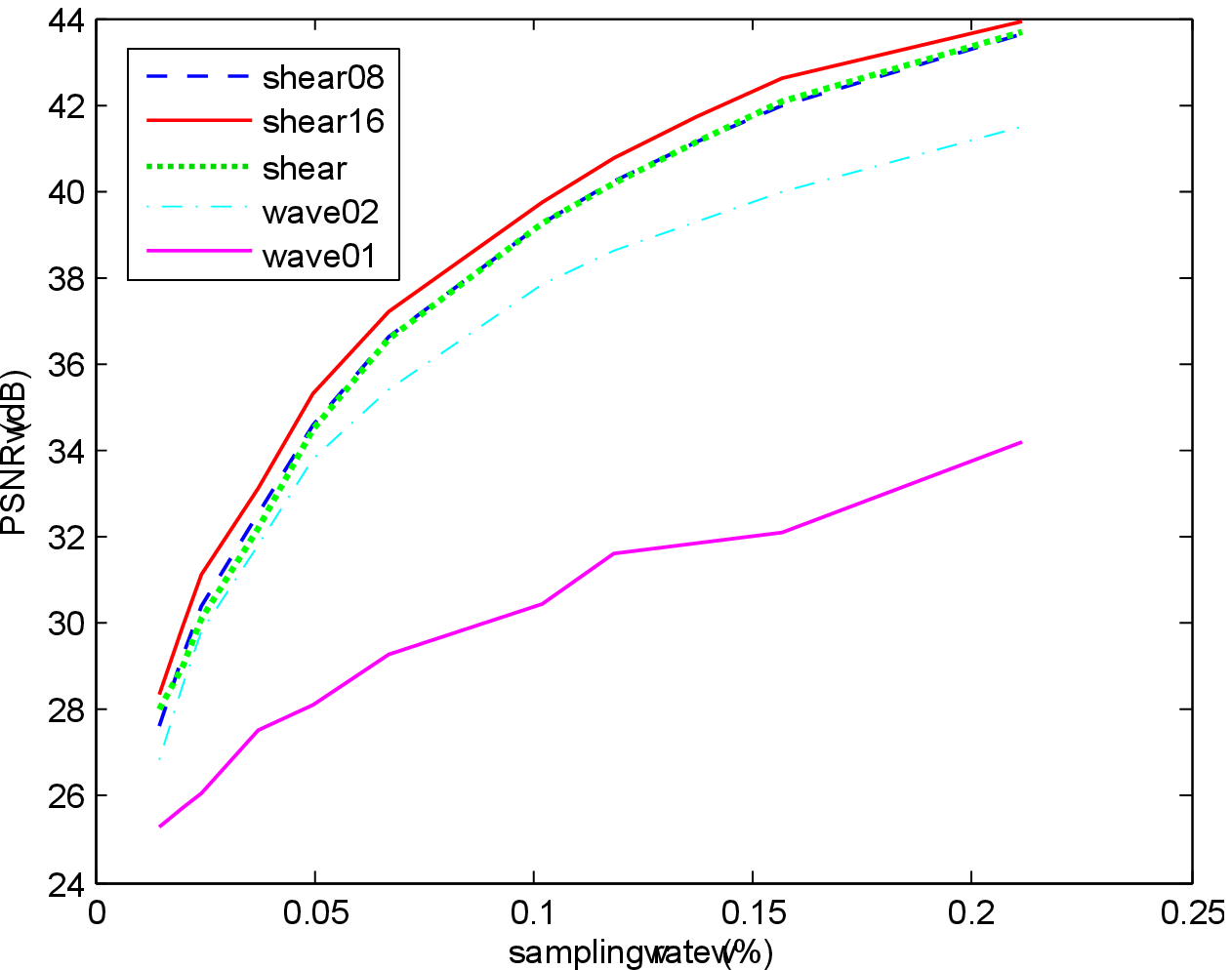}
\put(-290,-13){(c)}
\put(-95,-13){(d)}\\[2,5ex]
\includegraphics[width=0.4\textwidth]{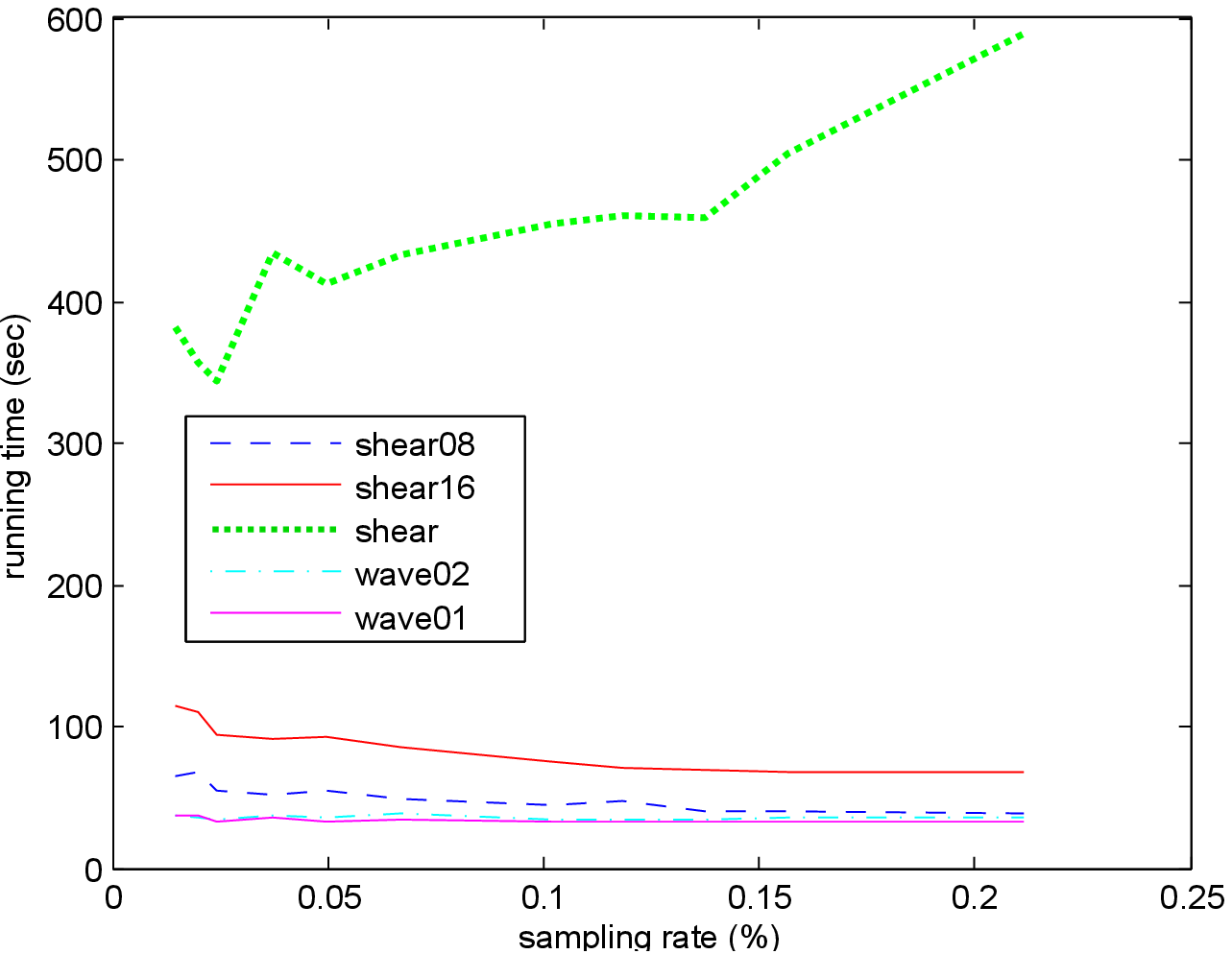}
\includegraphics[width=0.4\textwidth]{./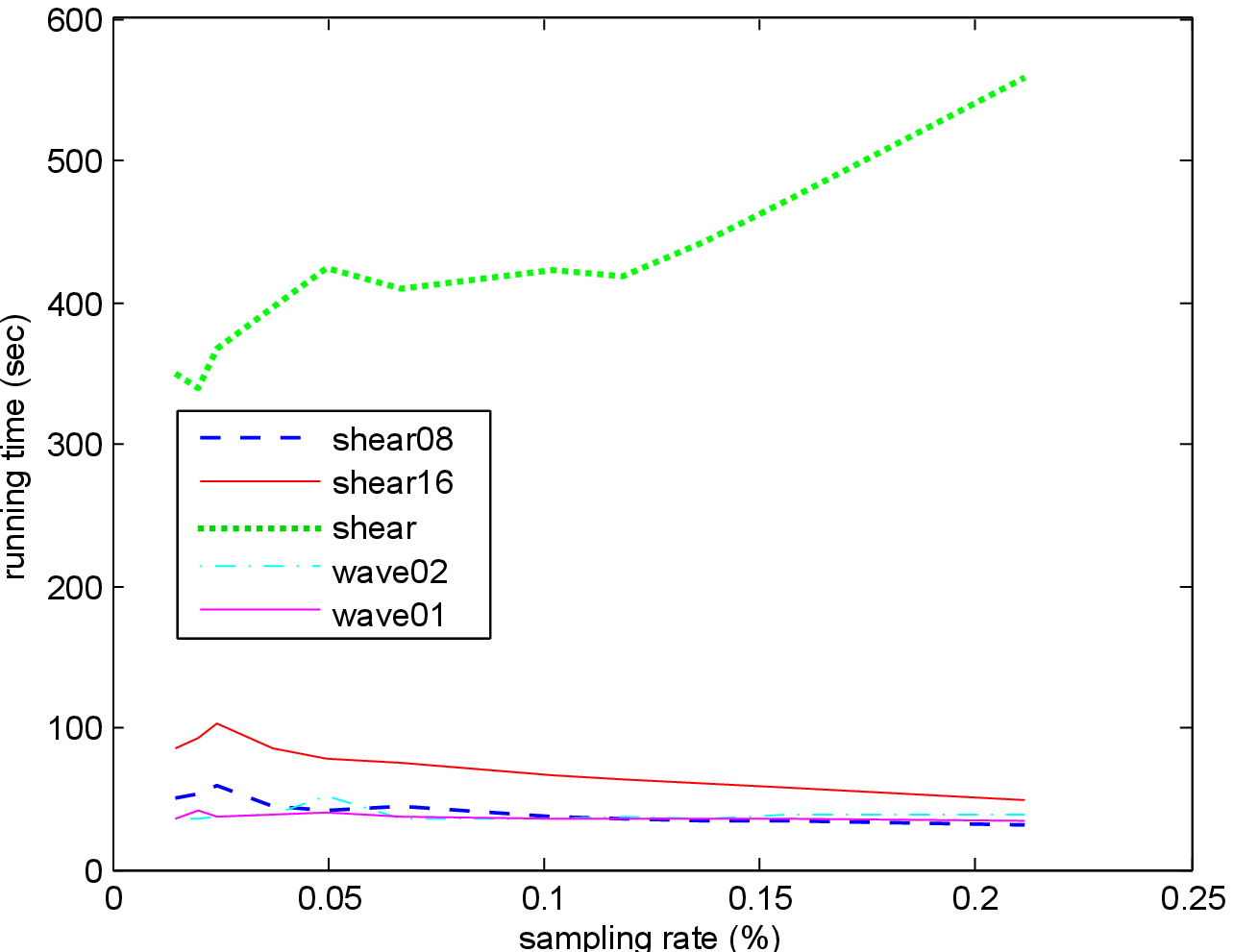}
\put(-290,-13){(e)}
\put(-95,-13){(f)}
\end{center}
\caption{(a) 512 $\times$ 512 test image $A$. (b) 512 $\times$ 512 test image $B$.
(c) PSNR values for various sampling schemes for image $A$.
(d) PSNR values for various sampling schemes for image $B$.
(e) Running time of each of the sampling schemes for image $A$.
(f) Running time of each of the sampling schemes for image $B$.}
\label{fig:experiments}
\end{figure}

A visual comparison of the reconstruction accuracy is provided in Figure \ref{fig:reconstruction}.
As expected from the previous observations, the shearlet based schemes outperform the wavelet
based schemes. \wq{In particular, Figure \ref{fig:reconstruction} (e)--(f) show that curvilinear 
edges are much better preserved by \gk{using an} anisotropic \gk{system for reconstruction}.}

\begin{figure}[h]
\begin{center}
\includegraphics[width=0.3\textwidth]{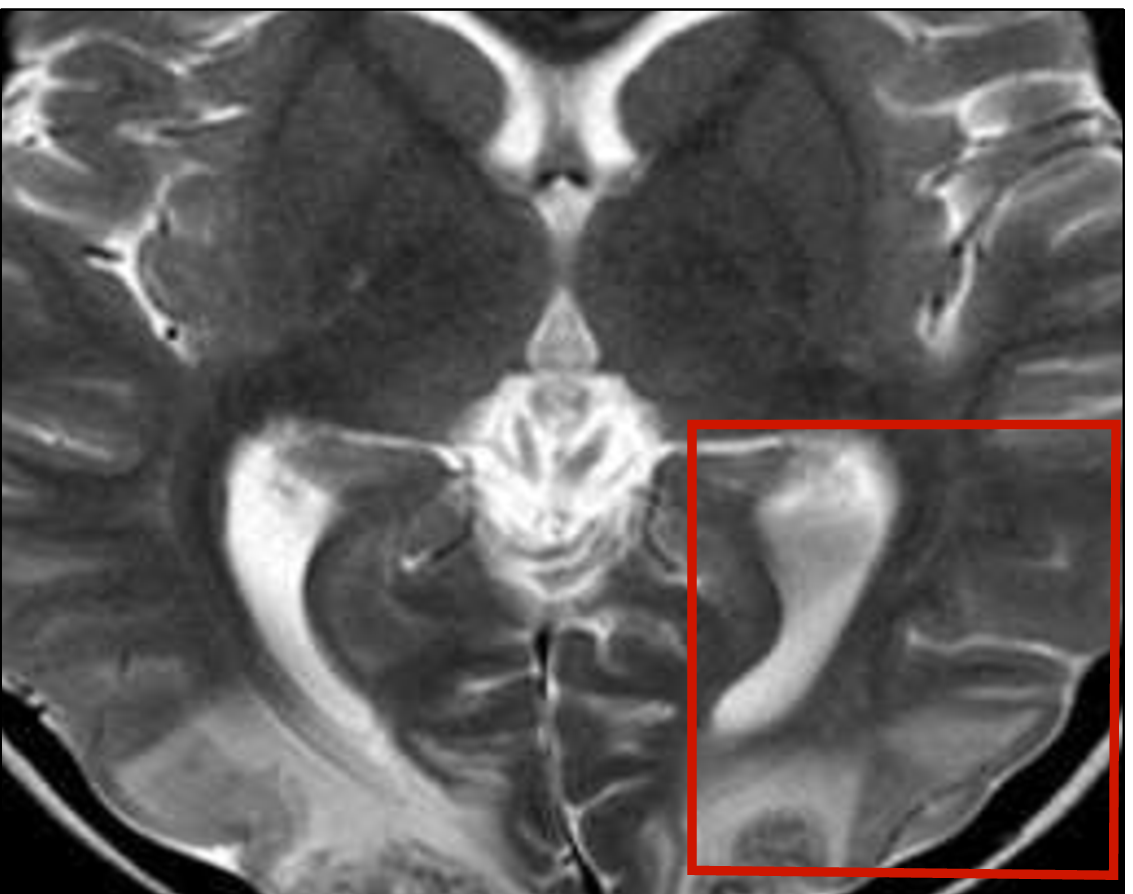}
\includegraphics[width=0.3\textwidth]{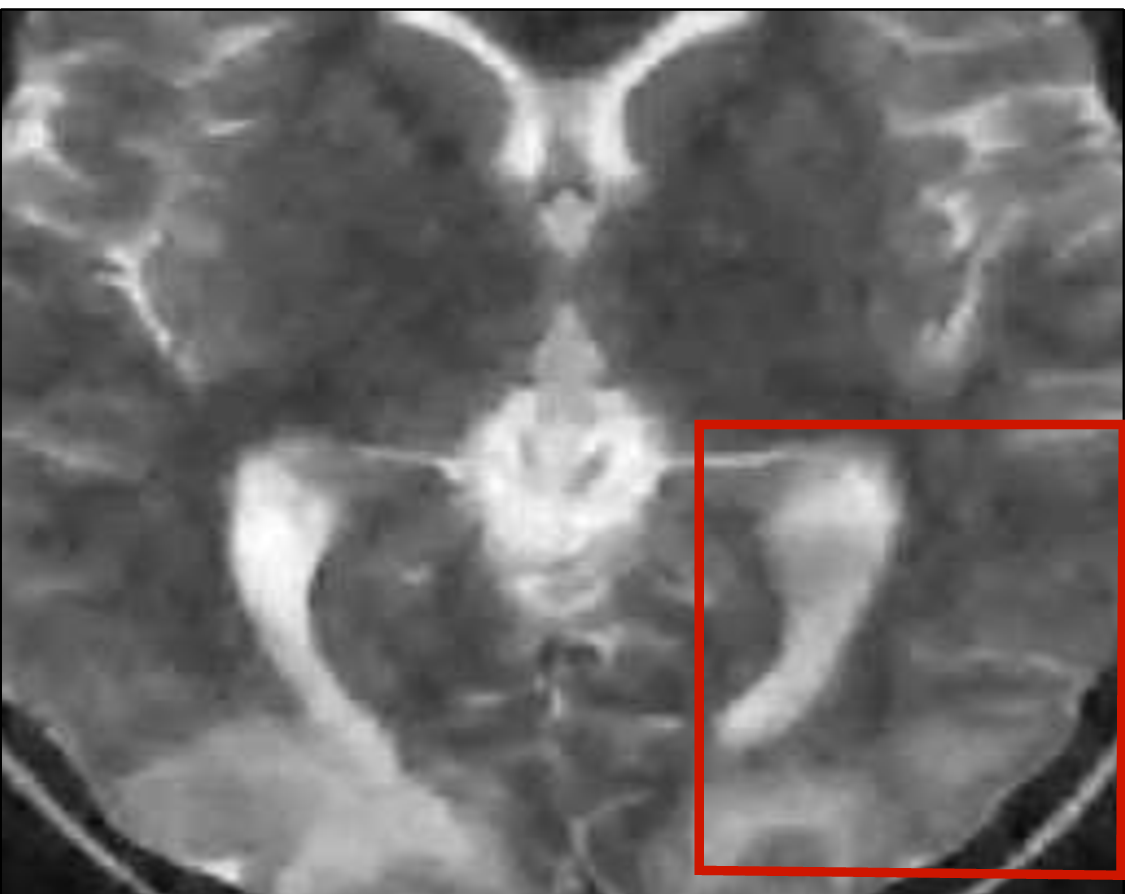}
\includegraphics[width=0.3\textwidth]{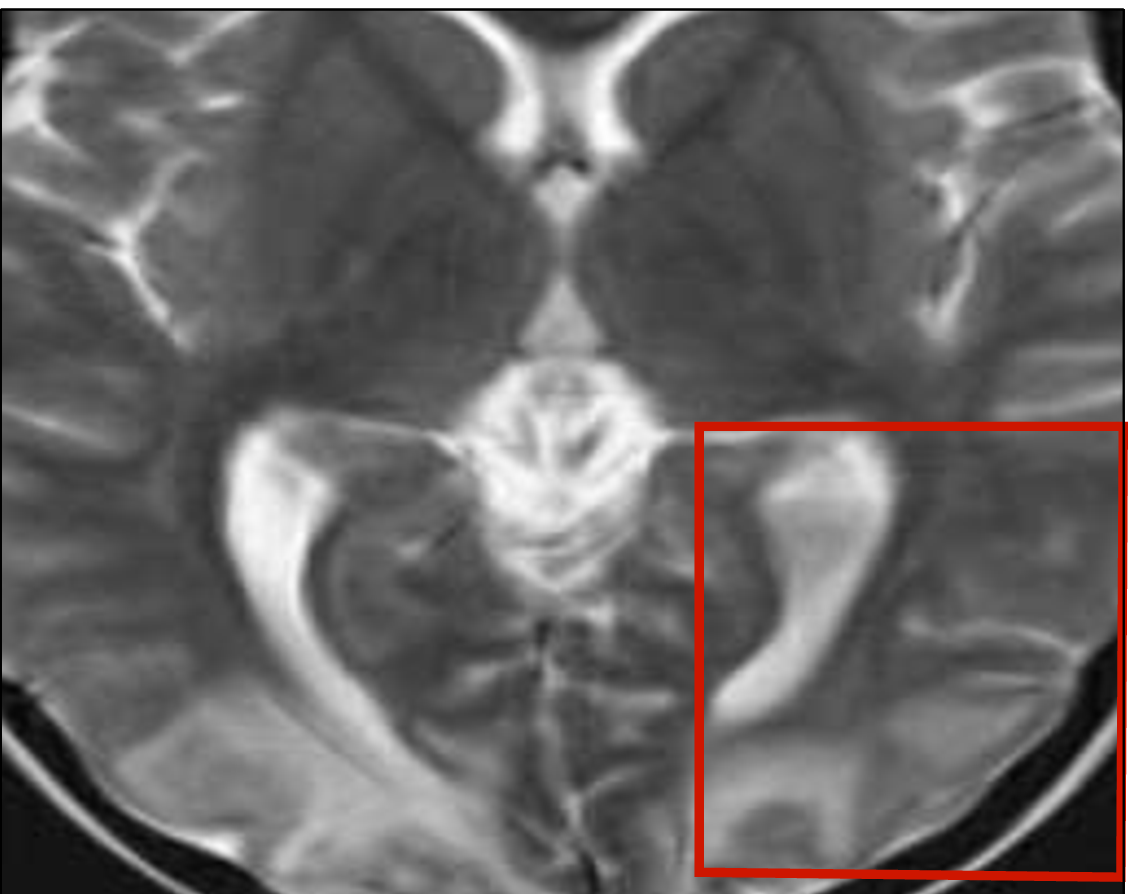}
\put(-365,-13){(a)}
\put(-220,-13){(b)}
\put(-74,-13){(c)}\\[2,5ex]
\includegraphics[width=0.3\textwidth]{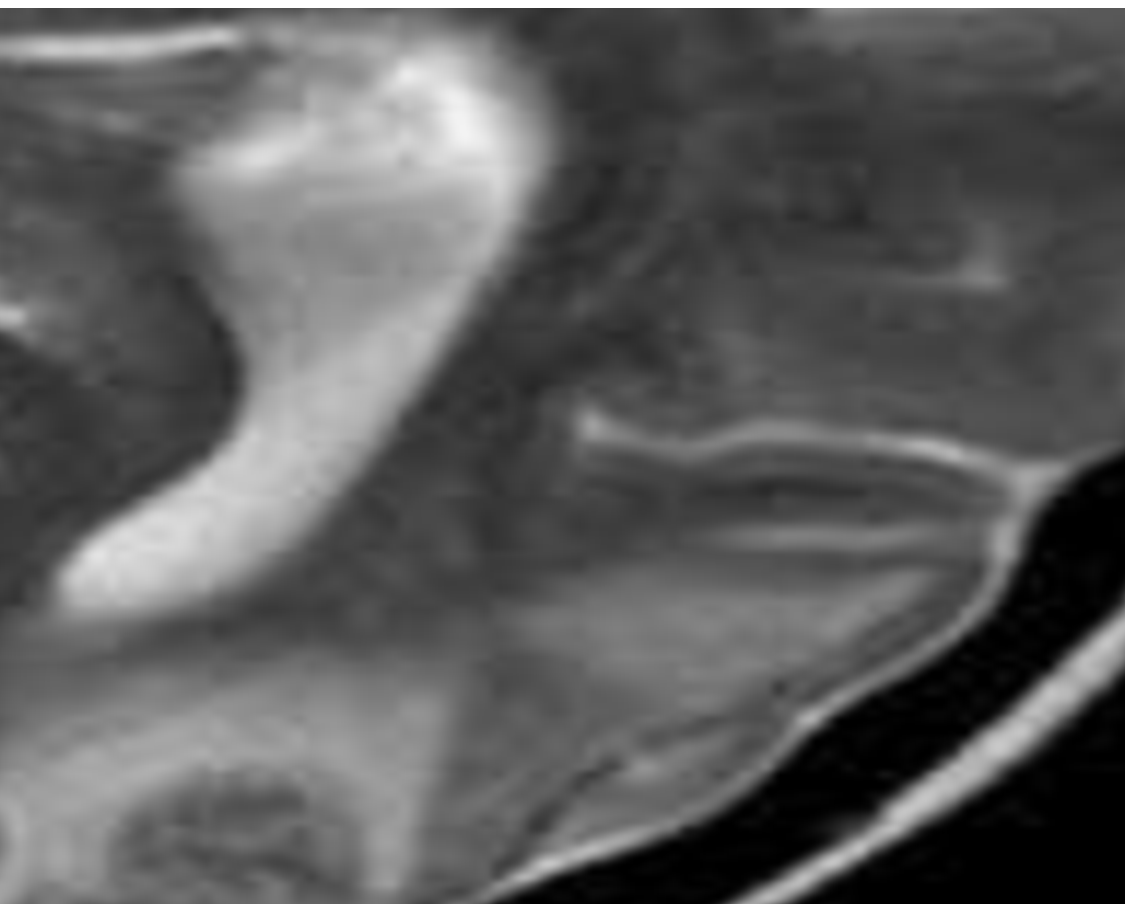}
\includegraphics[width=0.3\textwidth]{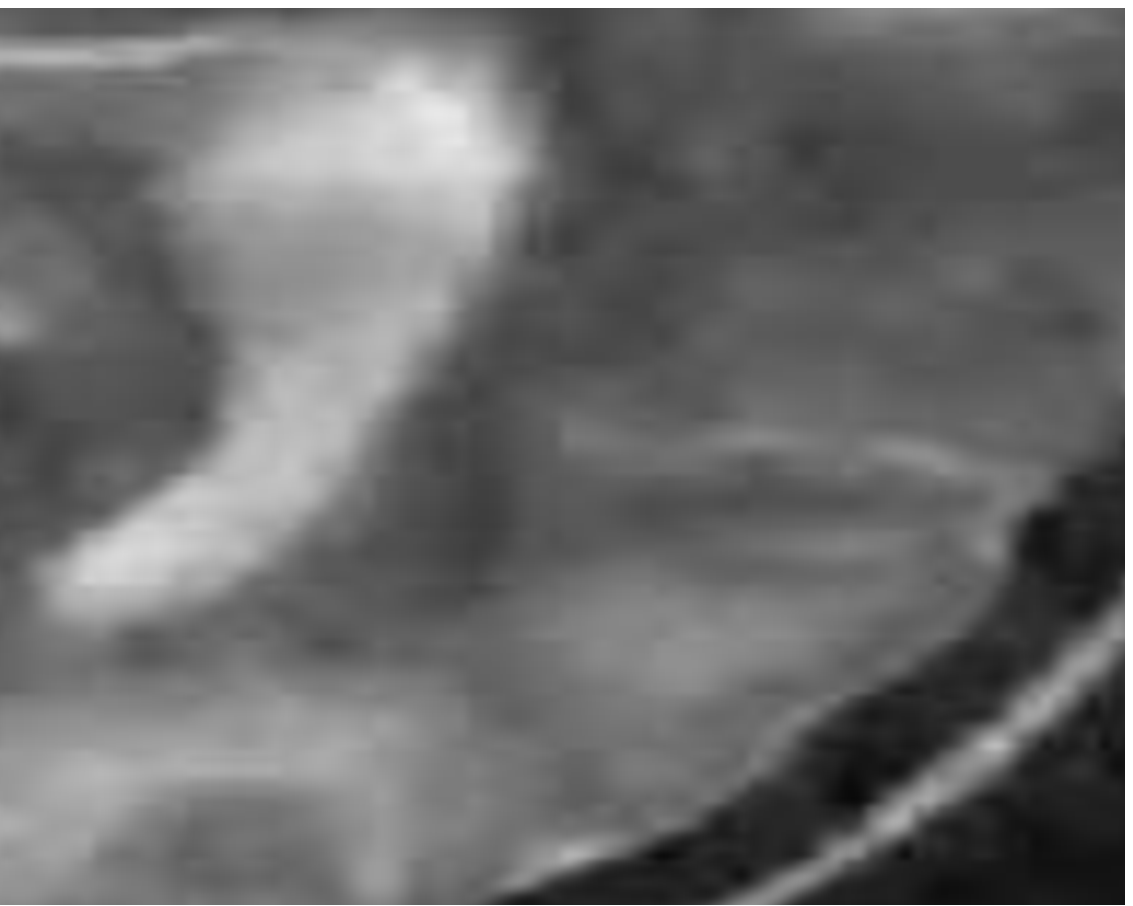}
\includegraphics[width=0.3\textwidth]{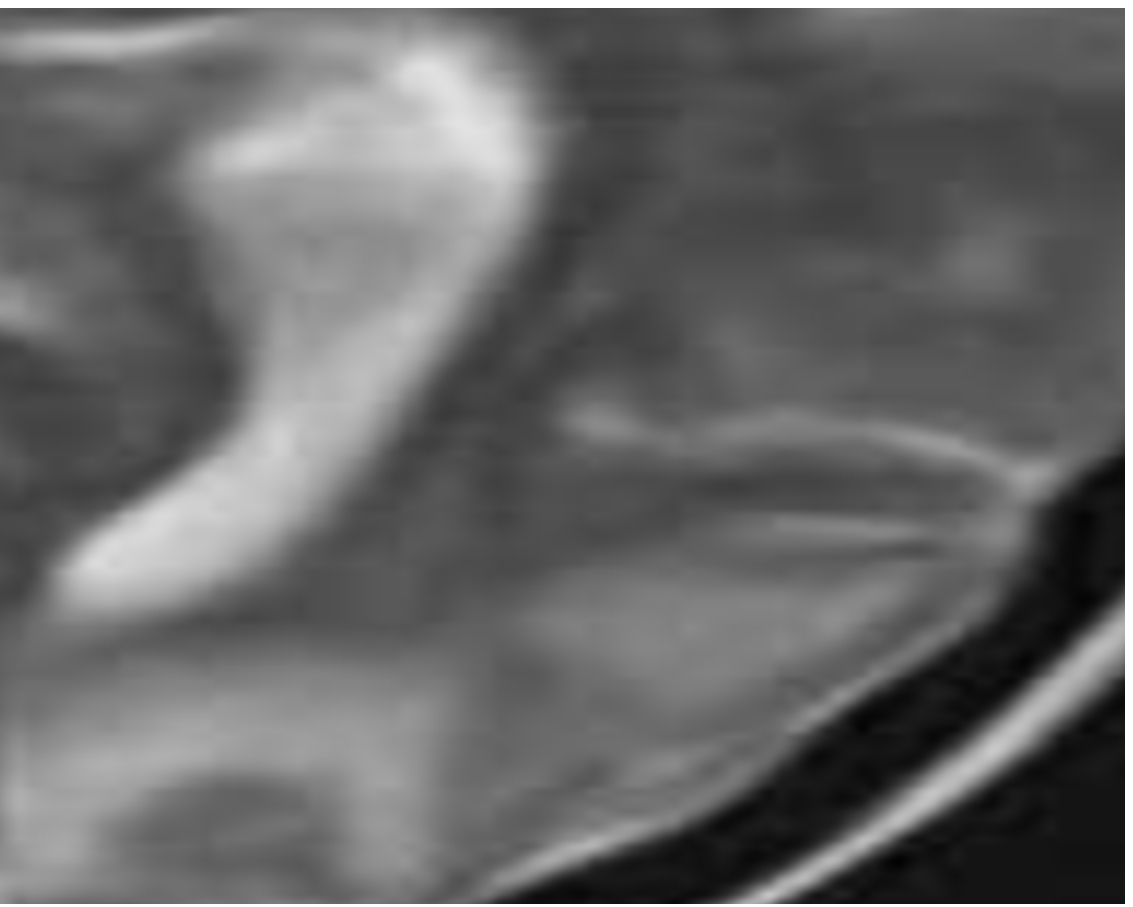}
\put(-365,-13){(d)}
\put(-220,-13){(e)}
\put(-74,-13){(f)}
\end{center}
\caption{Reconstructed images with wavelets and shearlets with our directional sampling using 5\% Fourier coefficients for test image A:
(a) Original image.
(b) {\bf wave01}: Reconstructed image with wavelets \gk{with} 27.02dB.
(c) {\bf shear16}: Reconstructed image with our directional sampling scheme \gk{with} 32.31dB.
(d) Original image (Zoom in).
(e) {\bf wave01}: Zoomed image for (b).
(f) {\bf shear16}: Zoomed image for (c).}
\label{fig:reconstruction}
\end{figure}

\end{document}